\newtheorem{X}{X}[section]
\newtheorem{corollary}[X]{Corollary}
\newtheorem{lemma}[X]{Lemma}
\newtheorem{hypothesis}[X]{Hypothesis}
\newtheorem{proposition}[X]{Proposition}
\newtheorem{theorem}[X]{Theorem}
\newtheorem{conjecture}[X]{Conjecture}
\theoremstyle{definition}
\newtheorem{remark}[X]{Remark}
\newcommand{\F}{\mathbb F}
\newcommand{\CC}{\mathbb{C}}
\newcommand{\wt}{\widetilde{w} \left( \frac dX \right)}
\newcommand{\wn}{w \left( \frac dX \right)}
\newcommand{\R}{\mathbb R}
\newcommand{\D}{\mathcal D}
\newcommand{\Q}{\mathbb Q}
\newcommand{\xf}{\frac{1}{W(X)}}
\newcommand{\col}{\: : \:}
\newcommand{\sumn}{\underset{(d,N_E)=1}{\sum \nolimits^{*}} w\left( \frac dX \right) }
\newcommand{\sumt}{\underset{(d,N_E)=1}{\sum \nolimits^{*}} \widetilde w\left( \frac dX \right) }
\newcommand{\sumtstar}{\underset{(d,N_E)=1}{\sum \nolimits^{*}} w\left( \frac dX \right) }
\newcommand{\sumnn}{\underset{(d,N_E)=1}{{\sum}^*}}
\newcommand{\sumtp}{\underset{\substack{(d,N_E)=1\\ p\mid d}}{\sum \nolimits^{*}} \widetilde w\left( \frac dX \right) }
\numberwithin{equation}{section}
\title[Low-lying zeros of elliptic curve $L$-functions]{Low-lying zeros of elliptic curve $L$-functions: \\Beyond the ratios conjecture}
\author{Daniel Fiorilli, James Parks and Anders S\"odergren}
\address{Department of Mathematics, University of Michigan, 530 Church Street, Ann Arbor, MI \newline
\rule[0ex]{0ex}{0ex}\hspace{8pt} 48109, USA}
\email{fiorilli@umich.edu}
\address{Department of Mathematics and Statistics, Concordia University, Montreal, QC, H3G\newline
\rule[0ex]{0ex}{0ex}\hspace{8pt} 1M8, Canada\newline
\rule[0ex]{0ex}{0ex}\hspace{8pt} \textit{Present address}: Department of Mathematics and Computer Science, University of \newline
\rule[0ex]{0ex}{0ex}\hspace{8pt} Lethbridge, 4401 University Drive, Lethbridge, AB, T1K 3M4, Canada}
\email{james.parks@uleth.ca}
\address{School of Mathematics, Institute for Advanced Study, Einstein Drive, Princeton, NJ\newline
\rule[0ex]{0ex}{0ex}\hspace{8pt} 08540, USA\newline
\rule[0ex]{0ex}{0ex}\hspace{8pt} \textit{Present address}: Department of Mathematical Sciences, University of Copenhagen,\newline
\rule[0ex]{0ex}{0ex}\hspace{8pt} Universitetsparken 5, 2100 Copenhagen \O, Denmark}
\email{sodergren@math.ku.dk} 
\thanks{The first author was supported by an NSERC Postdoctoral Fellowship. The second author was supported by a PIMS Postdoctoral Fellowship. The third author was supported by a Postdoctoral Fellowship from the Swedish Research Council, by the National Science Foundation under agreement No.\ DMS-1128155, as well as by a grant from the Danish Council for Independent Research and FP7 Marie Curie Actions-COFUND (grant id: DFF-1325-00058).}
\date{\today}
\begin{document}

\begin{abstract}
We study the low-lying zeros of $L$-functions attached to quadratic twists of a given elliptic curve $E$ defined over $\Q$. We are primarily interested in the family of all twists coprime to the conductor of $E$ and compute a very precise expression for the corresponding $1$-level density. In particular, for test functions whose Fourier transforms have sufficiently restricted support, we are able to compute the $1$-level density up to an error term that is significantly sharper than the square-root error term predicted by the $L$-functions Ratios Conjecture. 
\end{abstract}

\maketitle

\section{Introduction}\label{introduction}

The connection between zeros of $L$-functions and eigenvalues of random matrices first appeared in Montgomery's seminal paper on the pair correlation of zeros of the Riemann zeta function \cite{Mo}, where he proved that for suitably restricted test functions the pair correlation of the zeros of $\zeta(s)$ equals the pair correlation of the eigenvalues of random matrices from the Gaussian Unitary Ensemble (GUE). This work was later complemented by extensive numerical calculations of the zeros of $\zeta(s)$ by Odlyzko \cite{O1,O} that gave outstanding evidence for the agreement between local statistics of these zeros and the corresponding GUE statistics. It has also been shown that the $n$-level correlations of the zeros of $\zeta(s)$ agree with the corresponding GUE statistics, again for suitably restricted test functions \cite{He,RS1}.

In the work of Rudnick and Sarnak \cite{RS2}, it is shown that the $n$-level correlations of zeros of primitive automorphic $L$-functions all agree with the GUE statistics. However, it was predicted by Katz and Sarnak \cite{KS2,KS3} that by looking at low-lying zeros of families of $L$-functions, one should expect different statistics, which correspond to statistics of eigenvalues coming from scaling limits of certain compact Lie groups, specifically one of $U(N), O(N), SO(2N+1), SO(2N)$ and $Sp(2N)$.

Our purpose in the present paper is to study the low-lying zeros of the $L$-functions attached to the family of quadratic twists of a given elliptic curve $E$ over $\Q$. We assume that $E$ is given in global minimal Weierstrass form as
\begin{equation}\label{equation Weierstrass form}
E: y^2= x^3+ax+b,
\end{equation}
where $a,b \in \mathbb Z$. The discriminant of $E$ equals
\begin{align*}
\Delta_E:=-16(4a^3+27b^2)
\end{align*}
and is necessarily non-zero. We denote the conductor of $E$ by $N_E$ and recall that for $p>3$ we have $p\mid N_E$ if and only if $p \mid \Delta_E$. In general the conductor of an elliptic curve is a rather subtle object. However, for our purposes it will be enough to note that for all elliptic curves over $\Q$ the conductor is at least $11$; in particular we have $(N_E,0)>1$ (see, e.g., \cite{C}).

We now recall the definition of the $L$-function of $E$. The trace of the Frobenius endomorphism is given, for $p\nmid N_E$, by $a_p(E)= p+1-\#E_p(\F_p)$, where $\#E_p(\F_p)$ is the number of projective points on the reduction of $E$ modulo $p$. Extending the definition of $a_p(E)$ to the set of primes $p\mid N_E$ by setting
$$ a_p(E) := \begin{cases}
1 &\text{ if } E \text{ has split multiplicative reduction at }p, \\
-1 &\text{ if } E \text{ has non-split multiplicative reduction at }p, \\
0 &\text{ if } E \text{ has additive reduction at }p, \\
\end{cases} $$
the $L$-function of $E$ is defined as the Euler product
\begin{align}\label{Eulerproduct}
L(s,E):=&\prod_{p\mid N_E} \left( 1-\frac {a_p(E)}{p^{s+1/2}}\right)^{-1} \prod_{p\nmid N_E} \left( 1-\frac{a_p(E)}{p^{s+1/2}}+\frac 1{p^{2s}}\right)^{-1}\nonumber\\
=&\prod_{p} \left(1-\frac{\alpha_E(p)}{p^{s}}\right)^{-1}\left(1-\frac{\beta_E(p)}{p^{s}}\right)^{-1},\ \qquad \Re(s)>1.
\end{align}
Here, for all $p\nmid N_E$, $\alpha_E(p)$ and $\beta_E(p)$ are complex numbers satisfying $\beta_E(p)=\overline{\alpha_E(p)}$, $|\alpha_E(p)|=|\beta_E(p)|=1$ and $\alpha_E(p)+\beta_E(p)=a_p(E)/\sqrt p$. Moreover, in the remaining cases, that is when $p\mid N_E$, $\alpha_E(p)$ and $\beta_E(p)$ satisfy $\alpha_E(p)=a_p(E)/\sqrt p$ and $\beta_E(p)=0$. Thus $L(s,E)$ satisfies the Ramanujan-Petersson conjecture; in particular we have $|\alpha_E(p)|,|\beta_E(p)|\leq 1$ for all primes $p$. Note that with the above normalization the critical strip of $L(s,E)$ is $0<\Re(s)<1$. Expanding the product \eqref{Eulerproduct}, we define the sequence $\{\lambda_E(n)\}_{n=1}^{\infty}$ as the coefficients in the resulting Dirichlet series:
\begin{align*}
L(s,E)=\sum_{n=1}^{\infty}\frac{\lambda_E(n)}{n^s},\qquad \Re(s)>1.
\end{align*}
By the impressive work of Wiles \cite{W}, Taylor and Wiles \cite{TW}, and Breuil, Conrad, Diamond, and Taylor \cite{BCDT}, we know that there exists a cuspidal newform $f_E$ of weight $2$ and level $N_E$ such that $L(s,E)=L(s,f_E)$, that is, $L(s,E)$ is a modular $L$-function. In particular, it follows that $L(s,E)$ has an analytic continuation to the complex plane and that $L(s,E)$ satisfies the functional equation
\begin{align}\label{functionalequation}
\Lambda(s,E):=\left(\frac{\sqrt{N_E}}{2\pi}\right)^{s+\frac12}\Gamma(s+\tfrac12)L(s,E)=\epsilon_E\Lambda(1-s,E),
\end{align}
where $\epsilon_E=\pm1$ is the root number of $E$.

We are interested in the quadratic twists
\begin{equation}\label{equation quadratic twist}
E_d: dy^2= x^3+ax+b,
\end{equation}
of the fixed elliptic curve $E$.  It is clear that we can, by a change of variables, assume that $d$ is square-free. We furthermore restrict our attention to twists by integers $d$ satisfying $(d,N_E)=1$ and note that for such $d$ the conductor of $E_d$ equals $N_{E_d}=N_Ed^2$. We let $L(s,E_d)$ denote the $L$-function of $E_d$ defined by an Euler product as in \eqref{Eulerproduct}. As above, $L(s,E_d)$ is entire and satisfies the functional equation
\begin{align}\label{functionaltwist}
\Lambda(s,E_d):=\left(\frac{\sqrt{N_E}|d|}{2\pi}\right)^{s+\frac12}\Gamma(s+\tfrac12)L(s,E_d)=\epsilon_{E_d}\Lambda(1-s,E_d),
\end{align}
where the root number of $E_d$ satisfies $\epsilon_{E_d}=\epsilon_E \frac{d}{|d|}\chi_d(N_E)$ (cf.\ \cite[Prop.\ 14.20]{IK}; see also \cite[p.\ 538]{IK}). We let $\chi_d(n)$ denote the quadratic character defined in terms of the Kronecker symbol by
\begin{align*}
\chi_d(n):=\left( \frac dn\right).
\end{align*}
We will use the symbols $\chi_d(\cdot)$ and $(\tfrac d{\cdot})$ interchangeably throughout the paper. We stress that, as long as we keep $d$ square-free, $\chi_d$ is a  real primitive character (see \cite[Sect.\ 5]{D}). 

\begin{remark}
It is useful to note that $L(s,E_d)$ equals the Rankin-Selberg $L$-function
\begin{align*}
L(s,f_E\otimes \chi_d):=\sum_{n=1}^{\infty}\frac{\lambda_E(n)\chi_d(n)}{n^s},\qquad \Re(s)>1.
\end{align*}
\end{remark}

\begin{remark}
In what follows (cf.,\ e.g.,\ \eqref{equation one level density}) we will consider quadratic twists $E_d$ and their $L$-functions $L(s,E_d)$ also for non-square-free $d$. Writing $d'$ for the square-free part of $d$, note that  $L(s,E_{d})$ and $L(s,E_{d'})$ have the same nontrivial zeros. This observation will always allow us to reduce the study of quadratic twists by general $d$ to quadratic twists by square-free $d'$. 
\end{remark}

The $L$-functions coming from elliptic curves are in many ways analogous to the Riemann zeta function. In particular they are expected to satisfy the following Riemann hypothesis. 

\begin{hypothesis}[Elliptic Curve Riemann Hypothesis, ECRH] For any elliptic curve $E$ over $\mathbb Q$, the nontrivial zeros of $L(s,E)$ have real part equal to $\frac12$.
\end{hypothesis}

 Throughout this paper, $\phi$ will denote an even Schwartz test function satisfying $\phi(\mathbb R) \subset \mathbb R$. The Fourier transform of $\phi$ is defined as
$$ \widehat \phi(\xi):= \int_{\mathbb R}  \phi(t) e^{-2\pi i \xi t} dt. $$
The quantity we are interested in is the weighted $1$-level density of low-lying zeros of the family of $L$-functions attached to the quadratic twists $E_d$ of the fixed elliptic curve $E$. Given a (large) positive number $X$ and a test function $\phi$, we introduce the $1$-level density for the single $L$-function $L(s,E_d)$ as the sum
\begin{align*}
D_X(E_d;\phi):=\sum_{\gamma_d} \phi\left( \gamma_d\frac{  L}{2\pi} \right),
\end{align*}
with $\gamma_d:= -i(\rho_d-\frac12)$, where $\rho_d$ runs over the nontrivial zeros of $L(s,E_d)$. Moreover, $L$ is defined by
\begin{align}\label{Ldefinition}
L:=\log\left(\frac{N_E X^2}{(2\pi e)^2}\right).
\end{align}
Note that $L$ is chosen so that for $d\approx X$, the sequence $\displaystyle{\frac{ \gamma_d L}{2\pi}}$ of normalized low-lying zeros arising from (say) $\gamma_d\leq1$ has essentially constant mean spacing (recall that
by a Riemann-von Mangold type theorem as in for example \cite[Thm.\ 5.8]{IK} $L(s,E_d)$ has approximately $\log (N_E d^2 /(2\pi e)^2)/2\pi$ zeros in the region $0<\Im(\rho_d) \leq 1$). We further remark that ECRH asserts that $\gamma_d\in \mathbb R$.\footnote{It is not essential for our results to assume ECRH at this point. However, ECRH is of course crucial for enabling a spectral interpretation of our results.}

The quantity $D_X(E_d;\phi)$ is very hard to understand for individual elliptic curves. However, when we consider averages over families of quadratic twists the situation becomes much more tractable. To allow a maximally detailed analysis of the the $1$-level density, we first
consider the family\footnote{This family contains elliptic curves with both signs in the functional equation. We choose not to separate the family into even and odd signs in order to keep the statement of our main results as concise as possible. A similar analysis can give the corresponding results also for the even and odd families.} $\{E_d \col (d,N_E)=1\}$ which clearly contains an abundance of repetitions.\footnote{Allowing repetitions in the family in order to make the analysis manageable is not a new strategy; cf., e.g., \cite{Y2,FM}.} Hence we introduce the following weighted $1$-level density
\begin{equation}\label{equation one level density}
\D(\phi;X) := \frac 1{W(X)} \sum_{(d,N_E)=1}w\left( \frac dX \right)D_X(E_d;\phi),
\end{equation}
where $w(t)$ is an even nonnegative Schwartz test function having positive total mass (which morally restricts the sum to $d\ll X$) and
\begin{align}\label{W-sum}
W(X):=\sum_{(d,N_E)=1}w\left( \frac dX \right).
\end{align}
Note that any given zero occurring in \eqref{equation one level density} will be repeated infinitely many times. However, since $\phi$ and $w$ are rapidly decaying, most zeros will not be given a large total weight in the outer sum. Indeed, a zero that is given a large total weight necessarily originates from a curve with small conductor, but such zeros are on average quite far from the real line and thus cannot give a large contribution to the $1$-level density.

Quantities analogous to $\D(\phi;X)$ have been studied by Goldfeld \cite{Go}, Brumer \cite{Br}, Heath-Brown \cite{HB} and Young \cite{Y2}, in order to obtain conditional bounds on the average rank of elliptic curves in certain families. One can reinterpret their results as asymptotic estimates for $\D(\phi;X)$ when the support of $\widehat \phi$ is appropriately restricted; the larger the allowable support is, the better the resulting upper bound on the average rank becomes.

To predict an asymptotic for $\D(\phi;X)$, Katz and Sarnak \cite{KS2,KS3} associate a given (natural) family $\mathcal F$ of $L$-functions defined over a number field with a corresponding family of $L$-functions defined over a suitable function field. By an analysis of the function field family, they predict that the low-lying zeros of the $L$-functions in $\mathcal F$ behave like the eigenvalues near $1$ in a related compact Lie group $G(\mathcal F)$ of either unitary, orthogonal or symplectic matrices. In our case the symmetry group $G(\mathcal F)$ is $O(N)$, and the Katz-Sarnak prediction takes the form 
\begin{equation}
\lim_{X \rightarrow \infty}\D(\phi;X)=\widehat \phi(0)+ \frac {\phi(0)}2  = \int_{\mathbb R} \phi(t) \mathcal W_{O}(t) dt,
\label{equation Katz-Sarnak}
\end{equation}
where $\mathcal W_O:= 1+\frac 12 \delta_0$. The analogous prediction on a closely related family has been checked by Katz and Sarnak \cite{KS1}, for a restricted class of test functions. 

The Katz-Sarnak prediction on $\D(\phi;X)$ is given in terms of statistics of random matrices, and one can ask whether random matrix theory can 
predict other features of zeros of $L$-functions, such as possible lower order terms in \eqref{equation Katz-Sarnak}. It turns out that for
test functions $\phi$ whose Fourier transform has restricted support, Young \cite{Y1} has shown that, in certain families of elliptic curves, lower order terms of order $(\log X)^{-1}$ do exist in the $1$-level density. Moreover, these terms cannot be explained using random matrix theory.  Such lower order terms have also been found in families of quadratic twists of a fixed elliptic curve \cite{HMM}. The limitations of random matrix theory for making predictions on statistics of $L$-functions have also been observed in other contexts, most notably in predictions for moments \cite{KeS,KeS2}.

An extremely powerful conjecture was put forward by Conrey, Farmer and Zirnbauer \cite{CFZ}, which predicts estimates 
for averages of quotients of (products of) $L$-functions evaluated at certain values. A variant of this conjecture implies a formula for $\D(\phi;X)$ which contains the Katz-Sarnak prediction, lower order terms and an error term of size at most $O_{\varepsilon}(X^{-1/2+\varepsilon})$ (see \cite{HKS}). Other variants of the conjecture imply very precise estimates for many other $L$-function statistics \cite{CS}.

The Ratios Conjecture's prediction in our family contains the following modified weight function, on which we will expand in Section \ref{section prelim}. Given the even nonnegative Schwartz weight $w(t)$, we define
\begin{align}\label{wtildeintro}
 \widetilde w (x)= \widetilde w_E (x) := \sum_{ \substack{n\geq 1 \\ (n,N_E)=1}} w(n^2x). 
\end{align}
Moreover, throughout this paper the symbol $\sum_d^*$ indicates that the sum is restricted to square-free values of $d$. We now state a precise consequence of the Ratios Conjecture (Conjecture \ref{ratiosconjecture}). The proof of Theorem \ref{theratios} will be given in Appendix \ref{Appendix A}.

\begin{theorem} \label{theratios}
Fix $\varepsilon>0$. Let $E$ be an elliptic curve defined over $\Q$ with conductor $N_E$. Let $w$ be a nonnegative Schwartz function on $\R$ which is not identically zero and let $\phi$ be an even Schwartz function on $\R$ whose Fourier transform has compact support. Assuming GRH\footnote{In this paper, GRH denotes the Riemann Hypothesis for $\zeta(s), L(s,E)$ and $L(s,\text{Sym}^2E)$ for every elliptic curve $E$ over $\Q$.} and Conjecture \ref{ratiosconjecture} (the Ratios Conjecture for our family), the $1$-level density for the zeros of the family of $L$-functions attached to the quadratic twists of $E$ coprime to $N_E$ is given by
\begin{align}
\mathcal D(\phi;X)=& \frac {\widehat \phi (0)}{LW(X)} \sumt\log\bigg(\frac{N_{E}d^2}{(2\pi)^2}
\bigg) + \frac{1}{2\pi}\int_{\mathbb R}
\phi\left(\frac{tL}{2\pi}\right)\bigg[\frac{\Gamma'(1+it) }{\Gamma(1+it) } \notag\\
 & +\frac{\Gamma'(1-it)}{\Gamma(1-it)} + 2 \bigg(-\frac{\zeta'(1+2it)}{\zeta(1+2it)} +\frac{L'\left(1+2it,{\rm Sym}^2E\right)}{L\left(1+2it,{\rm Sym}^2E\right)}+A_{\alpha,E}(it,it)\bigg)- \frac{1}{it} \bigg]dt \label{equation ratios conj}\\
&+  \frac{\phi(0)}{2} + O_{\varepsilon}\big(X^{-1/2+\varepsilon}\big), \notag
\end{align}
where $^*$ indicates that we are summing over square-free $d$, the functions $L$, $W$ and $\widetilde w$ are defined by \eqref{Ldefinition}, \eqref{W-sum} and \eqref{wtildeintro} respectively, $L\left(s,{\rm Sym}^2E\right)$ is the symmetric square $L$-function of $E$ (cf.\ \cite{Sh}), and the function $A_{\alpha,E}$ is defined by \eqref{AALPHAE} (see also \eqref{vnmid}, \eqref{vmid}, \eqref{defofy} and \eqref{defnofae}). The implied constant in the error term depends on $E$, $\phi$ and $w$.
\end{theorem}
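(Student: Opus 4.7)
The plan is to follow the Conrey--Snaith recipe adapted to the quadratic twist family. First, the explicit formula applied to each $L(s,E_d)$ writes $D_X(E_d;\phi)$ as the sum of: (i) a log-conductor term proportional to $\widehat\phi(0)\log(N_E d^2/(2\pi)^2)/L$, (ii) a contribution from the gamma factors in the completed $L$-function which, after integration against $\phi(tL/2\pi)$, produces the $\Gamma'(1\pm it)/\Gamma(1\pm it)$ terms of \eqref{equation ratios conj}, and (iii) a remaining $t$-integral involving $(L'/L)(\tfrac12+it,E_d)$. Next, unfold the weighted sum $W(X)^{-1}\sum_{(d,N_E)=1}w(d/X)$ via the bijection $d=n^2 d'$ with $d'$ squarefree and $(n,N_E)=1$: since the nontrivial zeros of $L(s,E_d)$ depend only on $d'$ (Remark 1.2), the inner $n$-sum collapses by \eqref{wtildeintro} to $\widetilde w(d'/X)$. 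This produces the weight $\widetilde w$ and squarefree summation on the right-hand side of \eqref{equation ratios conj} and matches its first term immediately.

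The heart of the argument is the evaluation of the $d$-averaged logarithmic derivative
$$\xf \sumt \frac{L'}{L}\!\left(\tfrac12+it,E_d\right).$$
For this, invoke Conjecture \ref{ratiosconjecture} with shift parameters $\alpha,\gamma$ to obtain a prediction for the averaged ratio $\xf \sumt L(\tfrac12+\alpha,E_d)/L(\tfrac12+\gamma,E_d)$ with error $O_\varepsilon(X^{-1/2+\varepsilon})$. The predicted main term decomposes into a diagonal Euler-product piece and a swap piece produced by the functional equation \eqref{functionaltwist}. Differentiating in $\alpha$ and specializing $\alpha=\gamma=it$ converts the ratio into the logarithmic derivative: the diagonal part produces terms involving $-\zeta'(1+2it)/\zeta(1+2it)$, $L'(1+2it,\mathrm{Sym}^2 E)/L(1+2it,\mathrm{Sym}^2 E)$, and $A_{\alpha,E}(it,it)$ (the $\zeta$ and symmetric-square factors arising from the second-moment structure of $\chi_d(n)\chi_d(m)$ averaged over coprime squarefree $d$), while the limit of the $\zeta$-pole contribution yields the $-1/(it)$ term in \eqref{equation ratios conj}. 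Inserting this into the outer $t$-integral and using the evenness of $\phi$ to symmetrize $t\mapsto -t$ gives the bulk of \eqref{equation ratios conj}, and a standard contour analysis (accounting for the swap piece of the Ratios prediction at $\alpha=\gamma=it$) then produces the $\phi(0)/2$ summand, which is the orthogonal $\tfrac12\delta_0$ feature of the symmetry group in the Katz--Sarnak formalism.

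The principal technical obstacle is the singularity analysis in the differentiation step: for $\alpha\ne\gamma$ several terms in the Ratios prediction have poles at $\alpha=\gamma$ (from the diagonal, from $\zeta(1+\alpha-\gamma)$, and from the swap piece), and one must verify that they combine to leave only the isolated $-1/(it)$ singularity in the final formula. One must also check that the $\alpha$-differentiation preserves the $O_\varepsilon(X^{-1/2+\varepsilon})$ uniformly in $t$; since $\widehat\phi$ has compact support, the effective $t$-range of the outer integral is $O(1/L)$, which is enough to propagate the uniform error to the stated strength. The remainder of the appendix is then the careful bookkeeping needed to identify each piece of the differentiated Ratios formula with the corresponding term of \eqref{equation ratios conj}, in particular to verify that the arithmetic factor collected from the diagonal matches the definition \eqref{AALPHAE} of $A_{\alpha,E}$.
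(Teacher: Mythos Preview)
Your overall architecture is right, but two specific claims misidentify the mechanism and would derail the proof.

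First, the swap piece is \emph{zero} in this family. Since you are summing over all quadratic twists coprime to $N_E$ (both signs of the functional equation), the root number $\epsilon_{E_d}=\epsilon_E\frac{d}{|d|}\chi_d(N_E)$ averages to zero over the family, so $R_2(\alpha,\gamma)$ in the Ratios recipe is discarded entirely. Consequently the Ratios prediction here is simply $Y_E(\alpha,\gamma)A_E(\alpha,\gamma)$ with no dual term, and the $\phi(0)/2$ does \emph{not} come from any swap contribution. In the paper's argument the $\phi(0)/2$ arises purely analytically: after differentiating the Ratios formula to obtain the averaged $L'/L$ and inserting it into the argument-principle integral on $\Re(r)=c'>0$, one shifts the contour to $\Re(r)=0$. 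The integrand has a simple pole at $r=0$ with residue $1$ (coming from $-2\zeta'(1+2r)/\zeta(1+2r)\sim 1/r$), and the regularization that subtracts $1/r$ from the integrand produces both the $-1/(it)$ term in \eqref{equation ratios conj} and the leftover one-sided integral $\frac{1}{2\pi i}\int_{(c')}\phi(iLr/2\pi)\,r^{-1}\,dr=\phi(0)/2$.

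Second, there is no $\zeta(1+\alpha-\gamma)$ factor here and no pole of the Ratios main term at $\alpha=\gamma$. The structure is $Y_E(\alpha,\gamma)=\zeta(1+2\gamma)L(1+2\alpha,\mathrm{Sym}^2E)\big/\big[\zeta(1+\alpha+\gamma)L(1+\alpha+\gamma,\mathrm{Sym}^2E)\big]$, which satisfies $Y_E(r,r)=1$, so the differentiation $\partial_\alpha(Y_EA_E)|_{\alpha=\gamma=r}$ is entirely regular and yields $-\zeta'/\zeta(1+2r)+(L'/L)(1+2r,\mathrm{Sym}^2E)+A_{\alpha,E}(r,r)$ directly, with no delicate cancellation of poles. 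Your description of ``poles at $\alpha=\gamma$ from $\zeta(1+\alpha-\gamma)$ and from the swap piece'' seems imported from a different family (unitary or symplectic), and does not match this orthogonal case. Finally, the justification that differentiation preserves $O_\varepsilon(X^{-1/2+\varepsilon})$ is by Cauchy's integral formula for derivatives on a small circle in $\alpha$, not by the effective $t$-support of $\phi$; the compact support of $\widehat\phi$ is instead used to bound the tail $|\Im(r)|>X^{1-\varepsilon}$ of the contour integral, where the Ratios Conjecture is not asserted.
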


\begin{remark}
One can, for any $K\in \mathbb N$, rewrite \eqref{equation ratios conj} in the form\footnote{Note that the first two terms on the right-hand side of \eqref{equation descending powers} coincide with the Katz-Sarnak prediction.}
\begin{equation}
\D(\phi;X)= \widehat \phi (0) + \frac{\phi(0)}2 + \sum_{j=1}^{K-1} \frac{c_j(\phi,w,E)}{(\log X)^j} +O_{\phi,w,E,K} \left( \frac 1{(\log X)^K} \right),
\label{equation descending powers}
\end{equation} 
for some constants $c_j(\phi,w,E)$. Indeed, Lemma \ref{lemma logd} implies that the first term on the right-hand side of \eqref{equation ratios conj} is of the desired form. 
As for the second, making the change of variables $u= tL/2\pi$, truncating the resulting integral at the points $u=\pm \sqrt L$ and expanding the expression in square brackets into Taylor series around zero gives the desired expansion. 
\end{remark}

One of our main objectives is to give an estimate for $\D(\phi;X)$ with an error term of size at most $O_{\varepsilon}(X^{-1/2+\varepsilon})$, for test functions whose Fourier transforms have small support. Our first main theorem shows that we can obtain such an estimate with an error term that is significantly sharper than the error term appearing in the Ratios Conjecture's prediction (cf. Theorem \ref{theratios}). 

\begin{theorem}\label{main theorem}
Fix $\varepsilon>0$. Let $E$ be an elliptic curve defined over $\Q$ with conductor $N_E$. Let $w$ be a nonnegative Schwartz function on $\R$ which is not identically zero and let $\phi$ be an even Schwartz function on $\R$ whose Fourier transform satisfies $\sigma=$sup$(\text{supp}\widehat \phi)<\frac 12$. Then the $1$-level density for the zeros of the family of $L$-functions attached to the quadratic twists of $E$ coprime to $N_E$ is given by
\begin{align}
\begin{split}
\D(\phi;X)&=\frac {\widehat \phi(0)}{LW(X)}  \sumt \log \bigg(\frac{N_{E} d^2}{(2\pi)^2}\bigg)-\frac{2} L\int_0^{\infty} \left( \frac{\widehat \phi(x/L) e^{-x} }{1-e^{-x}} - \widehat \phi(0) \frac{e^{-x}}x \right)dx\\
&-\frac {2}{L}\sum_{\ell=1}^\infty \sum_{p} \frac{\left(\alpha_E(p)^{2\ell}+\beta_E(p)^{2\ell}\right) \log p}{p^{\ell}} \widehat \phi\left( \frac{2\ell\log p}{L} \right) \left( 1+\frac{\psi_{N_E}(p)}{p}\right)^{-1}+O_{\varepsilon}\big(X^{\eta(\sigma)+\varepsilon}\big),
\end{split}
\label{equation main thm}
\end{align}
where $^*$ indicates that we are summing over square-free $d$, $\psi_{N_E}$ is the principal Dirichlet character modulo $N_E$, the functions $L$, $W$ and $\widetilde w$ are defined by \eqref{Ldefinition}, \eqref{W-sum} and \eqref{wtildeintro} respectively, and 
$$
\eta(\sigma) =\begin{cases}
-1+2\sigma &\text{ if } \frac 1{4m+2} \leq  \sigma < \frac 1{4m+1}, \\
-\frac{4m-1}{4m+1} &\text{ if } \frac 1{4m+1} \leq  \sigma < \frac 1{4m-2},
\end{cases}
$$
for each $m\geq 1$. The implied constant in the error term depends on $E$, $\phi$ and $w$.
\end{theorem}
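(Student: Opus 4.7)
The plan is to apply the Riemann-Weil explicit formula to each $L(s,E_d)$ in the family and then perform the average over $d$ with weight $w(d/X)$. The explicit formula produces three kinds of contributions. The conductor term $\log(N_E d^2/(2\pi)^2)$ from the functional equation \eqref{functionaltwist} directly yields the first summand on the right-hand side of \eqref{equation main thm}, after using the identity
$$
\sum_{(d,N_E)=1} w(d/X) = \sumtt \widetilde{w}(d/X)
$$
which comes from writing each $d$ uniquely as $n^2 d'$ with $d'$ square-free and $(nd',N_E)=1$. The archimedean contribution from $\Gamma'/\Gamma$ at $1+it$ is a standard computation and, after the Mellin-type change of variable, yields the integral involving $e^{-x}/(1-e^{-x})$. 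The remaining term is a sum over prime powers of
$$
-\frac{2}{L}\sum_{k\geq 1}\sum_p \frac{(\alpha_E(p)^k+\beta_E(p)^k)\log p}{p^{k/2}}\,\chi_d(p)^k\,\widehat\phi\!\left(\frac{k\log p}{L}\right),
$$
and it is this sum, after averaging over $d$, that must be analyzed in depth.

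Split the prime-power sum according to the parity of $k$. For even exponents $k = 2\ell$, we have $\chi_d(p)^{2\ell}$ equal to the indicator of $(d,p)=1$, so the $d$-average reduces to a ratio of weighted counts of square-free integers coprime to $pN_E$ versus coprime to $N_E$; an elementary computation produces the local factor $(1+\psi_{N_E}(p)/p)^{-1}$ and hence the third main term of \eqref{equation main thm}. For odd $k = 2\ell+1$, one is left to estimate the character average
$$
\mathcal S_p = \frac{1}{W(X)}\sumtt \widetilde w(d/X)\,\chi_d(p),
$$
and it is here that the savings leading to the error term $X^{\eta(\sigma)+\varepsilon}$ originate. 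Applying quadratic reciprocity to convert $\chi_d(p) = \chi_p(d)$ up to a sign depending on residues of $d$ and $p$ modulo $4$, and then performing Poisson summation in $d$ modulo $8p$, one transforms $\mathcal S_p$ into a dual sum indexed by frequencies $h$, with the weight $\widehat{\widetilde w}(hX/(8p))$ ensuring rapid decay for $|h| \gg p/X$. The zero-frequency contribution vanishes because $\chi_p$ is non-principal, while the non-zero frequencies are controlled by Gauss sums of modulus $\sqrt{p}$.

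The main obstacle lies in extracting enough precision from the Poisson dual sum to produce the piecewise exponent $\eta(\sigma)$. In the regime $\sigma$ near $1/2$, a trivial estimate of the dual sum combined with the prime-sum length $p \ll X^{2\sigma}$ and the size $1/\sqrt{p}$ of each Gauss sum gives the bound $X^{-1+2\sigma+\varepsilon}$, matching the first regime of $\eta(\sigma)$. For smaller $\sigma$, the dual frequencies $h$ with square part must be isolated: those $h$ for which $h$ (or $h$ times a small correction) is a perfect square contribute genuine lower-order main terms that would otherwise spoil the desired error bound, and these must be identified and shown to cancel against lower-order contributions already present in the asymptotic expansion of the third main term of \eqref{equation main thm}. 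The remaining non-square frequencies are handled by the P\'olya-Vinogradov/Jutila-type bounds for averages of real character sums. Optimally balancing the number of main terms extracted ($m$ of them) against the resulting error forces the breakpoints at $\sigma = 1/(4m+1)$ and $\sigma = 1/(4m+2)$, yielding the error exponent $-(4m-1)/(4m+1)$ in the complementary regime. The technical heart of the proof is therefore this iterative extraction of lower-order terms from the Poisson dual sum and the verification that they precisely match the structure of \eqref{equation main thm}.
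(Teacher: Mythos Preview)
Your outline is correct through the explicit formula, the conductor and archimedean terms, and the even/odd splitting of the prime sum. The treatment of the even part is fine. The difficulty, and the genuine gap in your proposal, lies in the odd part.

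First, a technical point: you propose Poisson summation with the weight $\widehat{\widetilde w}$ and claim it ``ensures rapid decay for $|h|\gg p/X$''. But $\widetilde w(x)\asymp x^{-1/2}$ near $x=0$ (see the remark after Lemma~\ref{lemma mellin of tilde}), so $\widetilde w$ is not Schwartz and $\widehat{\widetilde w}$ does not decay rapidly. The paper handles this by first \emph{unfolding} the definition of $\widetilde w$: for $p\nmid 2N_E$,
\[
\sumt\Big(\tfrac{d}{p}\Big)=\sum_{k\ge 0}\sum_{(d,N_E)=1} w\Big(\tfrac{d}{X/p^{2k}}\Big)\Big(\tfrac{d}{p}\Big),
\]
and then applies Poisson summation with the genuine Schwartz weight $w$ at scale $X/p^{2k}$ for each $k$ separately (Lemma~\ref{lemma:initialPoisson}). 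This produces a sum over $k$, which is the structural source of the piecewise exponent $\eta(\sigma)$.

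Second, and more seriously, the mechanism you describe for the breakpoints is wrong. You claim that for small $\sigma$ one must ``isolate dual frequencies with square part'', extract ``genuine lower-order main terms'', and cancel them against an expansion of the third main term of \eqref{equation main thm}. No such extraction or cancellation takes place. What actually happens is this: for $\sigma$ in the range $\frac{1}{2(2m+1)}\le\sigma<\frac{1}{2(2m-1)}$, the support condition forces $p\le (c_E X^2)^{\sigma}<X^{1/(2m-1)}$, and for $k\le m-1$ the small-$p$ bound (Lemma~\ref{lemma:smallp}) already covers the entire range of $p$, so the terms $k=0,\dots,m-1$ are automatically $O(X^{-1})$. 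Only $k\ge m$ survive, and for these one balances a trivial bound on the Poisson dual sum against P\'olya--Vinogradov (Lemma~\ref{lemma:0.6}), the split point being $p=X^{1/(2k+1/2)}$. The worst surviving term is $k=m$, which yields the exponent $-\max\big(\tfrac{4m-1}{4m+1},\,1-2\sigma\big)$; the two cases of $\eta(\sigma)$ correspond to which of these two quantities is larger. There is no iterative main-term extraction.
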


The techniques used to prove Theorem \ref{main theorem} are inspired by the work of Katz and Sarnak \cite{KS1}. The main tools we use, which were pioneered by Iwaniec \cite{I} in this context, are Poisson summation and the P\'olya-Vinogradov inequality. The key to obtaining an error term sharper than the Ratios Conjecture's prediction is to allow repetitions in our family, and to use the smooth cutoff function~$w$. 

\begin{remark}
Theorem \ref{extendedoneleveldensityresult} shows that the sum of the second and third terms appearing on the right-hand side of \eqref{equation main thm} matches the sum of the second and third terms appearing on the right-hand side of \eqref{equation ratios conj}, up to an error which is at most $O_{\varepsilon}(X^{-1+\varepsilon})$. Therefore, Theorem \ref{main theorem} agrees with the Ratios Conjecture's prediction, but is even more precise. This result should be compared with the main theorem of \cite{FM}, in which the authors obtain an estimate for the $1$-level density in the family of all Dirichlet $L$-functions, which is more precise than the Ratios Conjecture's prediction.
\end{remark}

\begin{remark}
It is possible to improve the estimate in Theorem \ref{main theorem} in certain ranges of $\sigma$ by using Burgess's bound. We have chosen to carry out this improvement in a separate paper \cite{FPS}. 
\end{remark}

In the next theorem we show that ECRH implies a formula for $\D(\phi;X)$ with a sharper error term, which in particular doubles the allowable support for $\widehat \phi$. 

\begin{theorem}\label{second main theorem}
Fix $\varepsilon>0$. Let $E$ be an elliptic curve defined over $\Q$ with conductor $N_E$. Let $w$ be a nonnegative Schwartz function on $\R$ which is not identically zero and let $\phi$ be an even Schwartz function on $\R$ whose Fourier transform satisfies $\sigma=$sup$(\text{supp}\widehat \phi)<1$. Then, assuming ECRH, the $1$-level density for the zeros of the family of $L$-functions attached to the quadratic twists of $E$ coprime to $N_E$ is given by
\begin{align}
\begin{split}
\D(\phi;X)&=\frac {\widehat \phi(0)}{LW(X)}  \sumt \log \bigg(\frac{N_{E} d^2}{(2\pi)^2}\bigg)-\frac{2} L\int_0^{\infty} \left( \frac{\widehat \phi(x/L) e^{-x} }{1-e^{-x}} - \widehat \phi(0) \frac{e^{-x}}x \right)dx\\
&-\frac {2}{L}\sum_{\ell=1}^\infty \sum_{p} \frac{\left(\alpha_E(p)^{2\ell}+\beta_E(p)^{2\ell}\right) \log p}{p^{\ell}} \widehat \phi\left( \frac{2\ell\log p}{L} \right) \left(1+\frac{\psi_{N_E}(p)}{p}\right)^{-1}+O_{\varepsilon}\big(X^{\theta(\sigma)+\varepsilon}\big),
\end{split}
\label{equation second main thm}
\end{align}
where $^*$ indicates that we are summing over square-free $d$, $\psi_{N_E}$ is the principal Dirichlet character modulo $N_E$, the functions $L$, $W$ and $\widetilde w$ are defined by \eqref{Ldefinition}, \eqref{W-sum} and \eqref{wtildeintro} respectively, and 
$$
\theta(\sigma) =\begin{cases}
-1+\sigma &\text{ if } \frac 1{4m+2} \leq  \sigma < \frac 1{4m}, \\
-1+\frac{1}{4m} &\text{ if } \frac 1{4m} \leq  \sigma < \frac 1{4m-2}, \\
-1+\sigma & \text{ if } \frac 12\leq \sigma <1.
\end{cases}
$$
The implied constant in the error term depends on $E$, $\phi$ and $w$.
\end{theorem}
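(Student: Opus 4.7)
The plan is to parallel the proof of Theorem \ref{main theorem}, substituting ECRH-based estimates wherever the P\'olya--Vinogradov inequality was previously invoked. The main term and the two explicit terms on the right-hand side of \eqref{equation second main thm} arise identically: applying the explicit formula for each $L(s,E_d)$ and averaging against $w(d/X)\mathbf{1}_{(d,N_E)=1}$, the gamma-factor contributions produce the integral term, while the even-$\ell$ (square) portion of the resulting prime sum produces the double sum over $\ell$ and $p$. What must be sharpened is the estimate for the odd-$\ell$ (non-square) contribution, which after a change of order of summation is controlled by character sums of the shape $T(n):=\sumtt w(d/X)\,\chi_d(n)$ weighted by Hecke eigenvalues $\lambda_E(n)/\sqrt n$ and by $\widehat\phi(\log n/L)$.

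I would handle these character sums via Poisson summation in $d$, after reducing $\chi_d(n)$ to a Dirichlet character to a modulus dividing $4n$ via quadratic reciprocity. This produces a dual sum indexed by $k$, whose $k=0$ frequency annihilates non-principal characters and whose $k\neq 0$ terms are damped by $\widehat w(kX/(4n))$. The remaining middle range reduces to estimating short partial sums of $\lambda_E(n)\chi_d(n)$ over intervals of $n$; on ECRH, the explicit formula for $L(s,E_d)=L(s,f_E\otimes\chi_d)$ yields essentially square-root savings in the length of the sum over $n$, rather than the square-root savings in $n$ itself provided by P\'olya--Vinogradov. This substitution is the sole source of the enlarged support: it roughly doubles the admissible $\sigma$ from $1/2$ to $1$, yielding the estimate $O_{\varepsilon}(X^{\theta(\sigma)+\varepsilon})$.

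The piecewise shape of $\theta(\sigma)$ reflects the balance between three error contributions: the Poisson dual tail, the contribution from primes near the Fourier-support edge $p\approx X^{\sigma}$, and the M\"obius sieving for squarefree $d$. On $[\tfrac{1}{4m+2},\tfrac{1}{4m})$ the dominant error depends linearly on $\sigma$ and gives $\theta(\sigma)=-1+\sigma$; on $[\tfrac{1}{4m},\tfrac{1}{4m-2})$ the dual-sum contribution saturates, yielding the plateau $-1+\tfrac{1}{4m}$; the final range $[\tfrac12,1)$ is handled by a single direct ECRH bound without iterated Poisson summation. I expect the main obstacle to be the careful bookkeeping required to make the ECRH savings compound cleanly across these regimes: the combination of M\"obius sieving, Poisson dual truncation, and Gauss sum evaluation must be organized uniformly in the size of $n$, and the transitions at the breakpoints $\sigma=1/(4m)$ must be verified with no logarithmic losses that would erode the exponent.
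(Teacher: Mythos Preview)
Your outline is broadly on the right track and coincides with the paper's strategy: explicit formula, split into $S_{\text{even}}$ and $S_{\text{odd}}$, apply Poisson summation in $d$ to $S_{\text{odd}}$, then replace the P\'olya--Vinogradov bound by an ECRH-based prime-sum estimate of the form
\[
\sum_{p\le y}\left(\tfrac{m}{p}\right)\lambda_E(p)\log p \ll y^{1/2}(\log y)\log(2|m|yN_E)
\]
(Lemma~\ref{lemma Riemann bound}). However, several points in your description are inaccurate and would lead you astray if you followed them literally.

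First, there is no M\"obius sieving in the proof of Theorem~\ref{second main theorem}. You have confused this with the proof of Theorem~\ref{third main theorem} for $\mathcal D^*(\phi;X)$. For $\mathcal D(\phi;X)$ the passage to square-free $d$ is absorbed into the modified weight $\widetilde w$, and when one expands $\widetilde w$ the extra variable is the $p$-adic parameter $k$ appearing in Lemma~\ref{lemma:initialPoisson}; this $k$, not any M\"obius variable, is what drives the piecewise structure.

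Second, your explanation of the plateaus $\theta(\sigma)=-1+\tfrac{1}{4m}$ is wrong. They do not come from a saturating dual sum. Under ECRH the summation-by-parts argument (Lemma~\ref{lemma:ECRHsmallsupport}) only handles primes up to $\min\big(X^{1/(2k)},(c_EX^2)^{\sigma}\big)$; for the remaining primes $p>X^{1/(2k)}$ one \emph{still} applies P\'olya--Vinogradov, and it is precisely this residual P\'olya--Vinogradov range that produces the constant exponent $-1+\tfrac{1}{4m}$. So ECRH does not replace P\'olya--Vinogradov everywhere: the two are used in complementary ranges of $p$, and the $\min$ in Lemma~\ref{lemma:ECRHsmallsupport} records which one dominates.

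Third, there is no ``iterated'' Poisson summation anywhere; Poisson is applied once. The range $\tfrac12\le\sigma<1$ is handled by the same Lemma~\ref{lemma:ECRHsmallsupport} with $K=0$, not by a separate argument. Finally, your notation ``partial sums of $\lambda_E(n)\chi_d(n)$'' after Poisson-summing in $d$ is incoherent: the variable $d$ is gone, and what you need is the ECRH bound for $\sum_{p\le y}\left(\tfrac{-t\ell}{p}\right)\overline{\epsilon_p}\lambda_E(p)\log p$ with the \emph{dual} variable $t$ playing the role of the twist.
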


In Figure \ref{figure comp}, we compare the exponent of $X$ in the error terms of Theorems \ref{main theorem} and \ref{second main theorem} by plotting $\eta(\sigma)$ and $\theta(\sigma)$ as functions of $\sigma=$sup$(\text{supp}\widehat \phi)$.

Finally, we study the weighted $1$-level density averaged over square-free values of $d$:
\begin{equation}
\D^*(\phi;X) := \frac 1{W^*(X)} \sumtstar D_X(E_d;\phi),\label{equation one level density star}
\end{equation}
where
\begin{equation}
W^*(X):=\sumtstar
\label{Wstar-sum}.
\end{equation}
This quantity is more natural to study than $\mathcal D(\phi;X)$, since there are no repetitions. However, the estimate we obtain for the error term is weaker, and we are not able to surpass the Ratios Conjecture's prediction in this case.

\begin{theorem}\label{third main theorem}
Fix $\varepsilon>0$. Let $E$ be an elliptic curve defined over $\Q$ with conductor $N_E$. Let $w$ be a nonnegative Schwartz function on $\R$ which is not identically zero and let $\phi$ be an even Schwartz function on $\R$ whose Fourier transform satisfies $\sigma=$sup$(\text{supp}\widehat \phi)<1$. Then, assuming the Riemann Hypothesis (RH) and ECRH, the $1$-level density for the zeros of the family of $L$-functions attached to the square-free quadratic twists of $E$ coprime to $N_E$ is given by
\begin{align*}
\D^*(\phi;X)&=\frac {\widehat \phi(0)}{LW^{*}(X)}  \sumtstar \log \bigg(\frac{N_{E} d^2}{(2\pi)^2}\bigg)-\frac{2} L\int_0^{\infty} \left( \frac{\widehat \phi(x/L) e^{-x} }{1-e^{-x}} - \widehat \phi(0) \frac{e^{-x}}x \right)dx\nonumber\\
&-\frac {2}{L}\sum_{\ell=1}^\infty \sum_{p} \frac{\left(\alpha_E(p)^{2\ell}+\beta_E(p)^{2\ell}\right) \log p}{p^{\ell}} \widehat \phi\left( \frac{2\ell\log p}{L} \right) \left( 1+\frac {\psi_{N_E}(p)}p \right)^{-1} +O_{\varepsilon}\big(X^{\frac{\sigma-1}2+\varepsilon}\big),
\end{align*}
where $^*$ indicates that we are summing over square-free $d$ and the functions $L$ and $W^{*}$ are defined by \eqref{Ldefinition} and \eqref{Wstar-sum}  respectively. The implied constant in the error term depends on $E$, $\phi$ and $w$.
\end{theorem}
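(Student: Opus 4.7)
The plan is to apply the ECRH-assisted explicit formula to each $L(s,E_d)$, average over square-free $d$ coprime to $N_E$ weighted by $w(d/X)$, and control the resulting prime sums using the hypothesized Riemann hypotheses. Under ECRH the explicit formula yields
\begin{align*}
D_X(E_d;\phi)&=\frac{\widehat\phi(0)}{L}\log\!\bigg(\frac{N_Ed^2}{(2\pi)^2}\bigg)-\frac{2}{L}\int_0^\infty\!\bigg(\frac{\widehat\phi(x/L)e^{-x}}{1-e^{-x}}-\widehat\phi(0)\frac{e^{-x}}{x}\bigg)\,dx\\
&\quad-\frac{2}{L}\sum_{\ell\geq1}\sum_p\frac{(\alpha_E(p)^\ell+\beta_E(p)^\ell)\chi_d(p)^\ell\log p}{p^{\ell/2}}\widehat\phi\!\left(\frac{\ell\log p}{L}\right).
\end{align*}
Averaging, the first two pieces immediately reproduce the first two terms of the statement (the gamma integral being $d$-independent). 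The entire content of the proof thus lies in analysing the double prime-power sum, which I split into contributions from even and odd~$\ell$.

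For even $\ell=2k$ one has $\chi_d(p)^{2k}=\mathbf 1_{(p,d)=1}$ on square-free $d$, so the $d$-average reduces to the (normalized) density of square-free integers coprime to $pN_E$. A Möbius/Dirichlet-series computation, with RH for $\zeta(s)$ supplying the required power-saving error on the count of square-free integers, gives
$$\frac{1}{W^*(X)}\sumtstar\mathbf 1_{(p,d)=1}=\bigg(1+\frac{\psi_{N_E}(p)}{p}\bigg)^{-1}+O_{E,\varepsilon}\bigl(X^{-1/2+\varepsilon}\bigr)$$
uniformly in $p$. Substituted back, this produces precisely the third term of the statement.

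For odd $\ell$ the core quantity is the square-free twisted character sum
$$T(p;X):=\sumtstar \chi_d(p).$$
Writing $\mu^2(d)=\sum_{a^2\mid d}\mu(a)$ and recognising (via quadratic reciprocity, for odd $p\nmid N_E$) the map $m\mapsto\chi_m(p)$ as a fixed non-principal real Dirichlet character $\chi_p^*$ modulo $p$, I would represent $T(p;X)$ by a Mellin-Barnes integral whose integrand, up to Euler factor corrections at $p$ and at $N_E$, is
$$X^s\widetilde w(s)\frac{L(s,\chi_p^*)}{\zeta(2s)}.$$
Using RH for $\zeta(s)$ to shift the contour past $\Re s=1/2$ (the simple zero of $1/\zeta(2s)$ at $s=1/2$ cancels the only potential pole) and the corresponding RH for $L(s,\chi_p^*)$ to control the numerator on the critical line, one obtains a bound for $T(p;X)$ with the correct joint dependence on $X$ and $p$. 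Inserting this into the $\ell=1$ term and summing over the primes $p\leq e^{\sigma L}\ll X^{2\sigma}$ allowed by the support condition on $\widehat\phi$ yields the overall error $O_\varepsilon(X^{(\sigma-1)/2+\varepsilon})$; odd $\ell\geq 3$ contribute strictly less by absolute convergence of the corresponding prime sum ($p^{\ell/2}$ in the denominator).

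The main obstacle is producing this bound for $T(p;X)$ with sharp uniformity in both $p\leq X^{2\sigma}$ and the Möbius parameter $a\leq X^{1/2}$, and in balancing the exponents of $X$ and $p$ so that the final error retains the correct dependence on $\sigma$. This sieve cost is precisely the reason the present exponent $(\sigma-1)/2$ is weaker than $\theta(\sigma)$ of Theorem \ref{second main theorem}: there, the modified weight $\widetilde w$ replaces the square-free sum by a sum over \emph{all} integers coprime to $N_E$ and enables direct Poisson summation over $d$, bypassing the square-free sieve altogether.
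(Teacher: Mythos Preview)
Your treatment of the even-$\ell$ contribution is essentially correct, but the odd-$\ell$ argument has a genuine gap. Your plan is to bound the character sum $T(p;X)=\sumtstar\chi_d(p)$ individually for each prime $p$ via a Mellin--Barnes integral with integrand $X^s\mathcal Mw(s)L(s,\chi_p^*)/\zeta(2s)$. To make this work you invoke ``the corresponding RH for $L(s,\chi_p^*)$''; but the theorem assumes only RH for $\zeta(s)$ and ECRH, not GRH for Dirichlet $L$-functions. With only convexity for $L(s,\chi_p^*)$, shifting to $\Re(s)=\tfrac14+\varepsilon$ (as in Lemma~\ref{lemma count of squarefree}) gives $T(p;X)\ll p^{3/8+\varepsilon}X^{1/4+\varepsilon}$, and summing trivially over $p\leq X^{2\sigma}$ yields only $X^{-3/4+7\sigma/4+\varepsilon}$, which exceeds $X^{(\sigma-1)/2+\varepsilon}$ once $\sigma>\tfrac15$. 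Even granting the Lindel\"of bound $T(p;X)\ll p^{\varepsilon}X^{1/4+\varepsilon}$, the trivial $p$-sum gives $X^{-3/4+\sigma+\varepsilon}$, which is still worse than the target for $\sigma>\tfrac12$. In short, extracting cancellation only from the $d$-sum cannot reach the exponent $(\sigma-1)/2$ across the full range $\sigma<1$.

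The missing idea is that ECRH must be used to obtain cancellation in the \emph{prime} sum, not the $d$-sum. The paper opens $\mu^2(d)=\sum_{s^2\mid d}\mu(s)$, writes the resulting $d$-sum (for each $s$) as a genuine sum over all integers, and then for large $s>S$ applies ECRH in the form $\sum_{p\leq y}\lambda_E(p)\chi_m(p)\log p\ll y^{1/2}(\log y)\log(2|m|yN_E)$ (Lemma~\ref{lemma Riemann bound}) to the inner $p$-sum, giving a contribution $\ll S^{-1}X^{\varepsilon}$. For small $s\leq S$ one Poisson-summates over $d$ and again applies this ECRH bound to the resulting $p$-sum, giving $\ll SX^{\sigma-1+\varepsilon}$. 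Optimizing at $S=X^{(1-\sigma)/2}$ yields the stated error. Your proposal never invokes ECRH in the odd part, which is why it falls short; the role of ECRH here is precisely to supply square-root cancellation over primes weighted by $\lambda_E(p)\chi_m(p)$, and this is where the exponent $(\sigma-1)/2$ comes from.
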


\begin{remark}
One can remove the assumption of RH in Theorem \ref{third main theorem} by proving an unconditional version of Lemma \ref{lemma count of squarefree} with a weaker error term. Note that this modification does not affect the overall result. 
\end{remark}

\begin{remark}
Using similar techniques to those used to obtain Theorem \ref{third main theorem} but splitting the family according to the sign of the functional equation, one can improve both the allowable support of $\widehat \phi$ and the quality of the error term in the main theorem\footnote{Although in \cite[Theorem 1.1]{HMM} the authors claim an error term of $X^{-\frac{1-\sigma}2} (\log X)^6$, their proof only produces the weaker error term $X^{-\frac 12+\sigma} (\log X)^6$, which yields a nontrivial result for $\sigma<\tfrac 12$. In \cite[(2.35)]{HMM}, the restriction on the sum over primes should read $p^{2\ell+1} \leq X^{2\sigma}$, since in \cite[(2.31)]{HMM}, $\widehat g(\log p^{2k+1}/2L)$ is zero outside this range. Accounting for this, \cite[(2.35)]{HMM} results in the error term $X^{-\frac 12+\sigma} (\log X)^6$. Note also that the main term in \cite[Theorem 1.1]{HMM} is not correct as stated. Indeed, the third term on the right-hand side of \cite[(1.5)]{HMM} is the integral of a function which has a simple pole on the contour of integration.} of \cite{HMM}.
\end{remark}

\begin{figure}
\label{figure comp}
\begin{center}
\includegraphics[scale=.45]{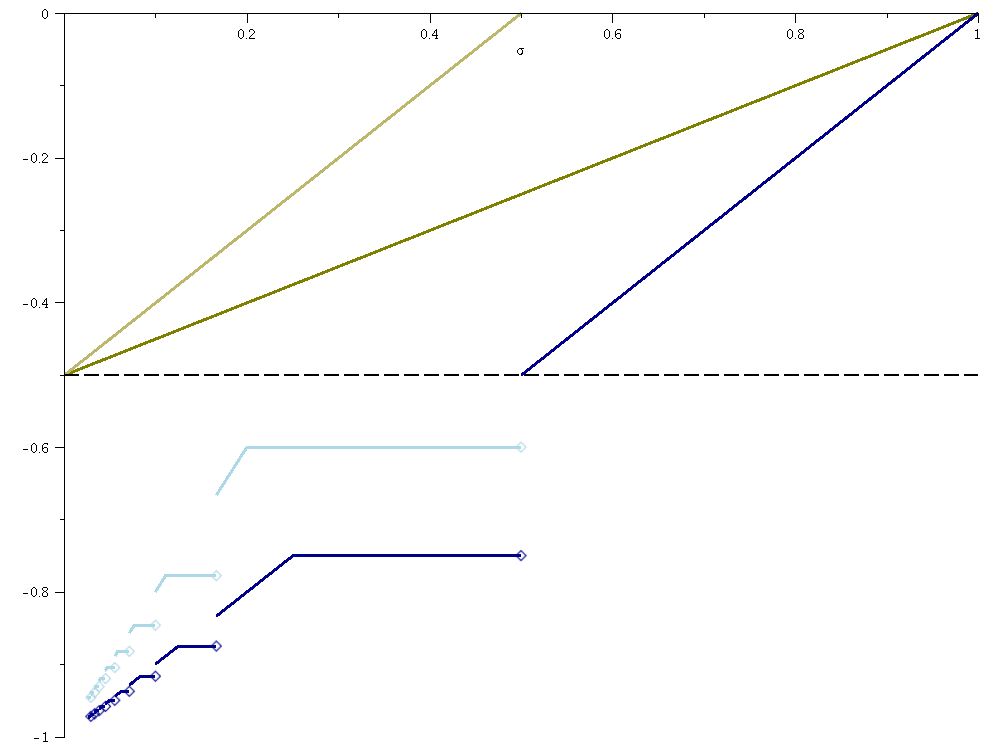}
\end{center}
\caption{A comparison of our unconditional results (light blue), our results under ECRH (dark blue), our results for $\mathcal D^*(\phi;X)$ (dark green), the results of \cite{HMM} (light green) and the Ratios Conjecture's prediction (dashed line).}
\end{figure}

\subsection{Acknowledgments}
The present work was initiated during the American Mathematical Society's  MRC program \emph{Arithmetic Statistics} in Snowbird, Utah, under the supervision of Nina Snaith. We are grateful to the AMS and the organizers of the program for financial support and encouragement. In particular we are grateful to Brian Conrey and Nina Snaith for inspiring discussions and helpful remarks. We also thank David Zywina for enlightening discussions.

\section{Preliminaries}
\label{section prelim}

We begin this section by showing that the Mellin Transform $\mathcal M w(s)$ has very nice analytical properties, which will be useful later. 
Recall that $w: \mathbb R \longrightarrow \mathbb R$ is a fixed nonnegative even Schwartz function which is not identically zero.

\begin{lemma}
\label{lemma fast decay}
The Mellin transform
$$ \mathcal M w(s):= \int_{0^+}^{\infty} x^{s-1} w(x) dx $$
is a holomorphic function of $s=\sigma+it$ except for possible simple poles at non-positive integers, and satisfies the bound
\begin{equation}
 \mathcal M w(s) \ll_{n,w} (1+|t|)^{-n}
 \label{equation bound on mellin}
\end{equation}
for any fixed $n \in \mathbb N$, uniformly for $\sigma$ in any compact subset of $\mathbb R$, provided $s$ is bounded away from the set $\{0,-1,-2,\dots \}$.
\end{lemma}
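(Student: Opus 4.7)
The plan is standard: establish absolute convergence in a right half-plane, then analytically continue via integration by parts (or equivalently subtracting Taylor tails of $w$ at $0$), and extract the decay bound from the same identity.

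First I would observe that for $\Re(s) > 0$, the integral defining $\mathcal{M}w(s)$ converges absolutely: the factor $x^{\sigma-1}$ is integrable near $0$ because $w$ is continuous (indeed, $w(0)$ is finite, so $|x^{s-1} w(x)| \le \|w\|_\infty x^{\sigma - 1}$ for small $x$), while the rapid decay of the Schwartz function $w$ controls the tail at $\infty$. Standard arguments (differentiating under the integral sign, Morera's theorem) then show that $\mathcal{M}w(s)$ is holomorphic on $\{\Re(s) > 0\}$.

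Next, to obtain meromorphic continuation and the decay bound simultaneously, I would iterate integration by parts. For $\Re(s) > 0$, the boundary terms vanish (since $w$ and all its derivatives are Schwartz, and $x^{s+k}\to 0$ as $x \to 0^+$ for $\Re(s) > -k$), giving
\begin{equation*}
\mathcal{M}w(s) = \frac{(-1)^n}{s(s+1)\cdots(s+n-1)} \int_{0^+}^{\infty} x^{s+n-1} w^{(n)}(x)\, dx
\end{equation*}
for every $n \ge 1$. The integral on the right-hand side converges absolutely and defines a holomorphic function of $s$ on $\{\Re(s) > -n\}$, by exactly the same argument as in the first paragraph applied to the Schwartz function $w^{(n)}$. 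Hence this identity furnishes a meromorphic continuation of $\mathcal{M}w(s)$ to $\{\Re(s) > -n\}$ whose only possible singularities are simple poles at $s = 0, -1, \dots, -(n-1)$. Letting $n \to \infty$ yields the claimed meromorphic continuation to all of $\mathbb{C}$ with simple poles only at non-positive integers.

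For the bound, fix a compact set $K \subset \mathbb{R}$ containing the real part $\sigma$ of $s$, and choose $n$ large enough that $K \subset (-n,\infty)$. Uniformly for $\sigma \in K$, the integral $\int_{0^+}^\infty x^{\sigma+n-1} |w^{(n)}(x)|\, dx$ is bounded by a constant depending only on $n$ and $w$ (since $w^{(n)}$ is Schwartz and $\sigma+n-1$ varies in a compact set of positive values). On the other hand, when $s$ is constrained to stay a positive distance $\delta > 0$ away from $\{0,-1,-2,\dots\}$, each factor $|s+k|$ is bounded below by $\max(\delta, |t|)$, so
\begin{equation*}
|s(s+1)\cdots(s+n-1)| \gg_{n,\delta} (1+|t|)^n.
\end{equation*}
Combining these gives $\mathcal{M}w(s) \ll_{n,w} (1+|t|)^{-n}$, as required. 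The only subtlety is uniformity — once $n$ is chosen as above, the argument is essentially automatic, and the condition that $s$ stays bounded away from the poles is used precisely to ensure the lower bound on $|s(s+1)\cdots(s+n-1)|$ when $|t|$ is small.
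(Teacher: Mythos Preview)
Your proof is correct and follows essentially the same approach as the paper's: iterate integration by parts to obtain $\mathcal{M}w(s) = \frac{(-1)^n}{s(s+1)\cdots(s+n-1)}\int_{0^+}^\infty x^{s+n-1} w^{(n)}(x)\,dx$, read off both the meromorphic continuation and the decay bound from this identity. One tiny wording slip: when you say ``$\sigma+n-1$ varies in a compact set of positive values'', you actually only need (and have) $\sigma+n-1$ in a compact subset of $(-1,\infty)$, which is exactly what $K\subset(-n,\infty)$ guarantees and what is required for the integral to converge near $0$.
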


\begin{proof}
Note first that since $w$ is Schwartz, the integral $\int_{0^+}^{\infty} x^{s-1} w(x) dx$ converges absolutely and uniformly on any compact subset of $\{s \in \mathbb C : \Re(s) > 0 \} $. To give an analytic continuation of $\mathcal M w(s)$, we integrate by parts:
$$ \mathcal M w(s) = \frac{x^s w(x)}{s} \Bigg|_{0^+}^{\infty} - \frac 1s \int_{0^+}^{\infty} x^{s} w'(x) dx = - \frac 1s \int_{0^+}^{\infty} x^{s} w'(x) dx,$$
which by absolute and uniform convergence of the integral shows that $\mathcal M w(s)$ is a holomorphic function for $\sigma>-1$ except possibly for a simple pole at $s=0$. Iterating this process $n$ times we obtain the formula
$$ \mathcal M w(s)  = \frac{(-1)^n}{s(s+1)\cdots(s+n-1)}  \int_{0^+}^{\infty} x^{s+n-1} w^{(n)}(x) dx.$$
This shows that \eqref{equation bound on mellin} holds, and that $\mathcal M w(s)$ extends to a holomorphic function on $\mathbb C$ except for possible simple poles at $s=0,-1,-2,\ldots$. Here we used that the integral $\int_{0^+}^{\infty} x^{s+n-1} w^{(n)}(x) dx$ converges absolutely and uniformly on compact subsets of $\Re(s)>-n$, due to the fact that $w$ is Schwartz.
\end{proof}

\begin{remark}
The proof of Lemma \ref{lemma fast decay} shows that $\mathcal M w(s)$ is holomorphic at $s=-n$ when $w^{(n)}(0)=0$, and has a simple pole at this point otherwise.
\end{remark}

\begin{lemma}
\label{lemma mellin of tilde}
Define the even smooth function $\widetilde w: \mathbb R \setminus \{0\} \rightarrow \mathbb R $ by
\begin{equation}
 \widetilde w (x)= \widetilde w_E (x) := \sum_{ \substack{n\geq 1 \\ (n,N_E)=1}} w(n^2x). \label{eq:wtildedef}
\end{equation}
Then $\widetilde w(x)$ decays rapidly as $x\rightarrow \infty$, and we have that
$$ \mathcal M \widetilde w (s) = \prod_{p\mid N_E} \left( 1-\frac 1{p^{2s}} \right) \zeta(2s) \mathcal M w(s). $$
\end{lemma}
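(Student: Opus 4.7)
The proof will proceed in two parts.

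\medskip

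\textbf{Rapid decay of $\widetilde w$.} Since $w$ is Schwartz, for every $N\in\mathbb N$ there exists a constant $C_N$ such that $|w(y)|\leq C_N(1+y)^{-N}$ for all $y\geq 0$. The plan is to bound $\widetilde w(x)$ termwise: for $x\geq 1$ and any $N>\tfrac 12$,
\begin{equation*}
\bigl|\widetilde w(x)\bigr|\leq \sum_{n\geq 1} C_N\bigl(1+n^2 x\bigr)^{-N}\leq C_N\, x^{-N}\sum_{n\geq 1}n^{-2N}=C_N\zeta(2N)\, x^{-N},
\end{equation*}
which since $N$ is arbitrary gives rapid decay. (Smoothness on $(0,\infty)$ follows from the same estimate applied to the series of derivatives, which converges uniformly on compact subsets of $(0,\infty)$.)

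\medskip

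\textbf{Mellin transform identity.} First I would work in the half-plane $\Re(s)$ large (in fact $\Re(s)>\tfrac 12$ will suffice), where the termwise bound above shows $\sum_n \int_0^\infty x^{\Re(s)-1}|w(n^2 x)|\,dx<\infty$. By Fubini we may interchange sum and integral to obtain
\begin{equation*}
\mathcal M \widetilde w(s)=\sum_{\substack{n\geq 1\\(n,N_E)=1}}\int_0^\infty x^{s-1}w(n^2 x)\,dx.
\end{equation*}
Next, substituting $u=n^2 x$ in each integral pulls out the factor $n^{-2s}$, giving
\begin{equation*}
\mathcal M \widetilde w(s)=\mathcal M w(s)\sum_{\substack{n\geq 1\\(n,N_E)=1}}n^{-2s}=\mathcal M w(s)\,\zeta(2s)\prod_{p\mid N_E}\!\left(1-\frac{1}{p^{2s}}\right),
\end{equation*}
where the last equality is a standard Euler-product manipulation valid for $\Re(s)>\tfrac 12$. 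Finally, the identity extends to all $s\in\mathbb C$ for which both sides make sense by analytic continuation: Lemma \ref{lemma fast decay} furnishes the meromorphic continuation of $\mathcal M w(s)$, and $\zeta(2s)$ and the finite Euler product over $p\mid N_E$ are meromorphic on $\mathbb C$.

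\medskip

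\textbf{Main obstacle.} This is a direct computation rather than a deep argument; the only care required is the rigorous justification of the interchange of summation and integration, which is handled by the termwise Schwartz bound above, valid uniformly on the region $\Re(s)>\tfrac 12$.
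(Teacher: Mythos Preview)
Your proof is correct and follows essentially the same approach as the paper's: interchange sum and integral for $\Re(s)$ large, substitute $u=n^2x$ to extract $n^{-2s}$, rewrite the resulting Dirichlet series as $\zeta(2s)\prod_{p\mid N_E}(1-p^{-2s})$, and extend by analytic continuation. You supply more detail than the paper (an explicit rapid-decay bound and an explicit Fubini justification), but the argument is the same.
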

\begin{remark}
Note that $\widetilde w(x)$ blows up near $x=0$. Indeed, it follows from \eqref{eq:wtildedef} and our assumptions on $w(x)$ that $\widetilde w(x)\asymp x^{-1/2}$ as $x\to 0$.
\end{remark}

\begin{proof}[Proof of Lemma \ref{lemma mellin of tilde}]
For $\Re(s)$ large enough, we have
\begin{align*}
\int_{0^+}^{\infty} x^{s-1} \widetilde w (x) dx &= \sum_{ \substack{n\geq 1 \\ (n,N_E)=1}} \int_{0^+}^{\infty} x^{s-1} w(n^2x) dx \\
&= \sum_{ \substack{n\geq 1 \\ (n,N_E)=1}} n^{-2s}  \int_{0^+}^{\infty}  u^{s-1} w(u) du =\prod_{p\mid N_E} \left( 1-\frac 1{p^{2s}} \right) \zeta(2s) \mathcal M w(s).
\end{align*}
The result follows by analytic continuation.
\end{proof}

\subsection{Weighted character sums}

The following estimate is central in our analysis of $\mathcal D(\phi;X)$.

\begin{lemma}\label{lemmaweighted}
Fix $n \in \mathbb N$ and $\varepsilon>0$. We have the estimate
\begin{multline*} \sumt  \left( \frac dn\right) =\kappa(n)X  \widehat w(0) \prod_{p\mid n} \left( 1+\frac 1 p\right)^{-1} \prod_{\substack{p\mid N_E}} \left( 1-\frac{1}{p} \right)  \prod_{\substack{p\mid (n,N_E)}} \left( 1+\frac{1}{p} \right) \\+O_{\varepsilon,w}\big(|n|^{\frac 12-\frac{\kappa(n)}2+\varepsilon}(XN_E)^{\varepsilon}\big), \end{multline*}
where
$$ \kappa(n) := \begin{cases}
1 &\text{ if } n = \square, \\
0 &\text{ otherwise.}
\end{cases}$$
\end{lemma}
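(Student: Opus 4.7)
The plan is to apply Poisson summation to the $d$-sum after suitable preparation, in the spirit of Soundararajan's and Heath-Brown's treatment of quadratic character sums. I would begin by unfolding $\widetilde w$ via \eqref{eq:wtildedef}, removing the squarefree condition through $\mu^2(d) = \sum_{a^2\mid d}\mu(a)$, and removing the coprimality condition $(d,N_E)=1$ through $\mathbf{1}_{(d,N_E)=1} = \sum_{e\mid(d,N_E)}\mu(e)$. After substituting $d = e a^2 b$ and using $\chi_n(a^2) = \mathbf{1}_{(a,n)=1}$, the sum takes the form
\begin{equation*}
\sumt \left(\frac{d}{n}\right) = \sum_{e\mid N_E}\mu(e)\chi_n(e)\sum_{(\ell,N_E)=1}\sum_{(a,nN_E)=1}\mu(a)\sum_{b\in\ZZ}\chi_n(b)\,w\!\left(\frac{\ell^2 a^2 e b}{X}\right).
\end{equation*}

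Since $\chi_n$ is periodic modulo some $q_n \mid 4|n|$, I would next apply Poisson summation to the innermost $b$-sum, which produces
\begin{equation*}
\sum_{b\in\ZZ}\chi_n(b)\,w\!\left(\frac{\ell^2 a^2 e b}{X}\right)=\frac{X}{\ell^2 a^2 e q_n}\sum_{h\in\ZZ}\widehat w\!\left(\frac{hX}{\ell^2 a^2 e q_n}\right)\tau_h(\chi_n),
\end{equation*}
where $\tau_h(\chi_n) := \sum_{b\bmod q_n}\chi_n(b)\, e^{2\pi i bh/q_n}$. I would then combine the $\ell$- and $a$-sums by setting $j = \ell a$; a short multiplicative computation shows that the resulting coefficient $\sum_{a\mid j,\,(a,nN_E)=1,\,(j/a,N_E)=1}\mu(a)$ equals $1$ precisely when $(j,N_E)=1$ and every prime of $j$ divides $n$ (i.e.\ $j\mid n^\infty$), and vanishes otherwise.

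For the main term, $\tau_0(\chi_n) = \sum_{b\bmod q_n}\chi_n(b)$ vanishes unless $\chi_n$ is principal, which happens precisely when $n$ is a perfect square; so the $h=0$ contribution is zero in the non-square case. When $n=\square$, I would extract the $h=0$ term as
\begin{equation*}
\widehat w(0)\,\frac{\varphi(q_n)}{q_n}\,X\,\sum_{e\mid N_E}\frac{\mu(e)\chi_n(e)}{e}\sum_{\substack{j\mid n^\infty\\(j,N_E)=1}}\frac{1}{j^2},
\end{equation*}
and a routine manipulation of the Euler factors---grouping primes according to whether they lie in $\{p\mid n\}$, $\{p\mid N_E\}$, or $\{p\mid(n,N_E)\}$, and using that $(1-1/p)(1-1/p^2)^{-1}=(1+1/p)^{-1}$---reconstructs the claimed main term.

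For the error ($h\neq 0$), I would bound Gauss sums by the Ramanujan-sum identity $\tau_h(\chi_n) = c_{q_n}(h)$ (yielding $|\tau_h|\leq(h,q_n)$) in the square case and by the classical bound $|\tau_h(\chi_n)|\leq(q_n(h,q_n))^{1/2}$ in the non-square case. The Schwartz decay of $\widehat w$ makes the $h$-sum negligible unless $M:=j^2 e q_n \gg X$; in that regime,
\begin{equation*}
\sum_{h\neq 0}\left|\widehat w\!\left(\frac{hX}{M}\right)\right|(h,q_n)^{1/2}\ll\frac{M}{X}\,q_n^\varepsilon,
\end{equation*}
so each admissible $(j,e)$ contributes at most $O(q_n^{1/2+\varepsilon})$ (or $O(q_n^\varepsilon)$ in the square case). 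Combined with the sparsity estimate $|\{j\mid n^\infty : j\leq B\}|\ll B^\varepsilon$, the divisor bound $|\{e\mid N_E\}|\ll N_E^\varepsilon$, and $q_n\ll|n|$, this recovers the claimed bound $O_\varepsilon(|n|^{1/2-\kappa(n)/2+\varepsilon}(XN_E)^\varepsilon)$. The main obstacle is obtaining the sharp $|n|^{1/2}$ exponent rather than the trivial $|n|$: this requires the subtler Gauss-sum bound $|\tau_h(\chi_n)|\ll(q_n(h,q_n))^{1/2}$, which I would verify by reducing $\tau_h(\chi_n)$ to the Gauss sum of the primitive character inducing $\chi_n$ and absorbing the resulting divisor factors into $n^\varepsilon$.
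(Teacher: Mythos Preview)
Your approach via Poisson summation is correct and yields the same main term and error exponent, but it follows a genuinely different route from the paper. The paper proceeds by Mellin inversion: it writes the sum as a contour integral, uses Lemma~\ref{lemma mellin of tilde} to identify the integrand as
\[
L\big(s,\left(\tfrac{\cdot}{n}\right)\big)\prod_{p\mid N_E}\Big(1+\tfrac{(p/n)}{p^s}\Big)^{-1}\Big(1-\tfrac{1}{p^{2s}}\Big)\prod_{p\mid n}\Big(1-\tfrac{1}{p^{2s}}\Big)^{-1}\mathcal Mw(s)X^s,
\]
and then shifts the contour to $\Re(s)=\varepsilon$, picking up the pole at $s=1$ in the square case and bounding the remaining integral by the convexity bound for $L(s,(\cdot/n))$. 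The combinatorics you carry out by hand (the $\ell$--$a$ merger into $j\mid n^\infty$, the Euler-factor regrouping) are in the paper absorbed into the single identity $\mathcal M\widetilde w(s)=\zeta(2s)\mathcal Mw(s)\prod_{p\mid N_E}(1-p^{-2s})$, and the $|n|^{1/2}$ saving that you extract from the Gauss-sum bound arises there from the convexity exponent $\tfrac{1-\Re(s)}2$. Your method is more elementary (no contour shifting, no $L$-function bounds) and makes the arithmetic explicit; the paper's is shorter and sidesteps the bookkeeping on Gauss sums of imprimitive characters and on the $j$-sum.

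Two small points to tighten. First, your sparsity estimate should read $|\{j\mid n^\infty:j\leq B\}|\ll_\varepsilon (Bn)^\varepsilon$ rather than $B^\varepsilon$; the uniform-in-$n$ bound follows from $\sum_{j\mid n^\infty}j^{-\varepsilon}=\prod_{p\mid n}(1-p^{-\varepsilon})^{-1}\ll_\varepsilon n^\varepsilon$ (splitting primes at $2^{1/\varepsilon}$). Second, for $j$ with $j^2eq_n\gg X$ your per-term bound $q_n^{1/2+\varepsilon}$ does not decay in $j$, so the $j$-sum as written diverges; you should instead invoke the trivial bound $\big|\sum_b\chi_n(b)w(j^2eb/X)\big|\ll_N (X/(j^2e))^N$ in that range, which gives the needed convergence. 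Both fixes are routine and do not affect the final exponent.
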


\begin{remark} \label{whatiswx}
In particular, taking $n=1$ in Lemma \ref{lemmaweighted} gives 
$$ W(X)= \sumt = X  \widehat w(0)  \prod_{\substack{p\mid N_E}} \left( 1-\frac{1}{p} \right) + O_{\varepsilon,w}\big((XN_E)^{\varepsilon}\big), $$
an estimate which will be useful in the proof of Theorem \ref{main theorem} and in Appendix \ref{Appendix A}.
\end{remark}

\begin{remark}\label{remarkgeneralinteger}
Lemma \ref{lemmaweighted} applies equally well when $N_E$ is replaced by any nonzero integer (not necessarily the conductor of an elliptic curve).
\end{remark}

\begin{proof}[Proof of Lemma \ref{lemmaweighted}]
First note that $\widetilde w(t)$ is even, and hence
\begin{equation}
\sumt \left( \frac dn\right) = \left(1+\left( \frac{-1}n \right)\right)\underset{\substack{ d\geq 1 \\ (d,N_E)=1}}{{\sum}^*}  \wt \left( \frac dn\right).
\label{eq:seperation in two sums}
\end{equation}
Using Mellin inversion, we write
\begin{align*}
I_n(X):&=\sum_{ \substack{d\geq 1 \\ (d,N_E)=1}} \mu^2(d) \wt \left( \frac dn\right) \\
 &= \frac 1{2\pi i } \int_{\Re(s)=2} \sum_{\substack{d\geq 1 \\ (d,N_E)=1}} \frac{\mu^2(d) \left(\frac dn \right)}{d^s} X^s \mathcal M \widetilde w(s) ds \\
&= \frac 1{2\pi i } \int_{\Re(s)=2} \prod_{p\nmid N_E} \bigg(1+ \frac{\left(\frac pn \right)}{p^s} \bigg) \prod_{p\mid N_E} \left( 1-\frac 1{p^{2s}} \right) \zeta(2s) \mathcal M w (s)X^s ds \hspace{1cm} \text{(by Lemma \ref{lemma mellin of tilde})}\\
&= \frac 1{2\pi i } \int_{\Re(s)=2} \prod_{p\mid N_E} \bigg(1+ \frac{\left(\frac pn \right)}{p^s} \bigg)^{-1} \frac{L(s,\left(\frac{\cdot}n \right))}{L(2s,\left(\frac{\cdot}n \right)^2)} \prod_{p\mid N_E} \left( 1-\frac 1{p^{2s}} \right)\zeta(2s) \mathcal M w (s)X^s ds\\
&=  \frac 1{2\pi i } \int_{\Re(s)=2} L\big(s,\left(\frac{\cdot}n \right)\big) \prod_{p\mid N_E} \bigg(1+ \frac{\left(\frac pn \right)}{p^s} 
\bigg)^{-1}\left( 1-\frac 1{p^{2s}} \right) \prod_{p\mid n} \left( 1-\frac 1{p^{2s}} \right)^{-1}  \mathcal Mw(s) X^s ds.
\end{align*}

We first consider the case when $n$ is not a square. In this case $L(s,\left(\frac{\cdot}n \right))$ is holomorphic at $s=1$, and thus we can shift the contour of integration to the left:
$$ I_n(X) = \frac 1{2\pi i } \int_{\Re(s)=\varepsilon} L\big(s,\left(\frac{\cdot}n \right)\big) \prod_{p\mid N_E} \bigg(1+ \frac{\left(\frac pn \right)}{p^s} \bigg)^{-1}\left( 1-\frac 1{p^{2s}} \right) \prod_{p\mid n} \left( 1-\frac 1{p^{2s}} \right)^{-1}  \mathcal Mw(s) X^s ds. $$
Here $0<\varepsilon < 1/4$ is fixed (note that the integrand might have poles on the line $\Re(s)=0$). We then apply the convexity bound \cite[(5.20)]{IK}, which for non-principal $\chi$ reads
$$ L(s,\chi) \ll_{\varepsilon} (q(|s|+1))^{\frac{1-\Re(s)}2+\frac{\varepsilon}2} \hspace{1cm} (0\leq \Re(s) \leq 1),  $$
where $q$ is the conductor of $\chi$. Combining this with Lemma \ref{lemma fast decay} yields the bound
\begin{align*}  I_n(X) &\ll_{\varepsilon,w} \int_{ \mathbb R} (|n|(|t|+1))^{\frac{1-\varepsilon}2+\frac{\varepsilon}2} \prod_{p\mid N_E} \bigg(1- \frac{1}{p^{\varepsilon}} \bigg)^{-1}\left( 1+\frac 1{p^{2\varepsilon}} \right) \prod_{p\mid n} \left( 1-\frac 1{p^{2\varepsilon}} \right)^{-1} (|t|+1)^{-2} X^{\varepsilon} dt  \\
& \ll_{\varepsilon} |n|^{\frac 12+ \varepsilon} N_E^{\varepsilon} X^{\varepsilon}.
\end{align*}
Indeed, we have the bound
$$ \prod_{p\mid N_E} \bigg(1- \frac{1}{p^{\varepsilon}} \bigg)^{-1} \ll_{\varepsilon} \prod_{\substack{p\mid N_E \\ p> 2^{1/\varepsilon}}} \bigg(1- \frac{1}{p^{\varepsilon}} \bigg)^{-1} \leq \prod_{\substack{p\mid N_E \\ p> 2^{1/\varepsilon}}} 2 \ll_{\varepsilon} N_E^{\varepsilon/2}, $$
and the two other products over primes are bounded in a similar fashion.
The proof of this case is completed by combining these estimates with \eqref{eq:seperation in two sums}.

As for the case where $n$ is a square, we again shift the contour of integration to the left, picking up a residue at $s=1$. Note that in this case
$$L\big(s,\left(\frac{\cdot}n  \right)\big) = \prod_{p\mid n} \left( 1-\frac 1{p^s} \right) \zeta(s),  $$
and hence the contribution of the pole at $s=1$ is given by
$$ \prod_{p\mid n} \left( 1+\frac 1 p\right)^{-1} \prod_{p\mid N_E} \left( 1+\frac{\left( \frac pn\right)}{p} \right)^{-1} \left( 1-\frac 1{p^{2}} \right)  \mathcal M w(1) X.  $$
By Lemma \ref{lemma fast decay}, the shifted integral is
\begin{align*}  &\ll_{\varepsilon,w} \int_{ \mathbb R} (|t|+1)^{\frac{1-\varepsilon}2+\frac{\varepsilon}2}  \prod_{p\mid n} \left( 1+\frac 1{p^{\varepsilon}} \right)   \\ &  \hspace{2cm} \times\prod_{p\mid N_E} \bigg(1- \frac{1}{p^{\varepsilon}} \bigg)^{-1}\left( 1+\frac 1{p^{2\varepsilon}} \right) \prod_{p\mid n} \left( 1-\frac 1{p^{2\varepsilon}} \right)^{-1} (|t|+1)^{-2} X^{\varepsilon} dt  \\
& \ll_{\varepsilon}  (|n|XN_E)^{\varepsilon}.
\end{align*}
The proof is finished by combining this estimate with \eqref{eq:seperation in two sums} and by noting that $ \mathcal M w(1) = \widehat w(0)/2 . $
\end{proof}

The next lemma is used to understand the first main term in the $1$-level density (see for example \eqref{equation ratios conj} or \eqref{equation main thm}).

\begin{lemma}
\label{lemma logd}
Fix $\varepsilon>0$, and assume the Riemann Hypothesis (RH). We have the estimate\footnote{Remark \ref{remarkgeneralinteger} also applies here.}
\begin{multline*} \frac 1{W(X)} \sumt  \log |d|= \log X + \frac 2{\widehat w(0)}\int_0^{\infty} w(x) \log x  dx +\sum_{p\mid N_E} \frac{2\log p}{p^2-1} +2\frac{\zeta'(2)}{\zeta(2)}\\ - \prod_{p\mid N_E} \left( 1-\frac 1{p^{1/2}} \right)\left( 1-\frac 1{p} \right)^{-1}\frac{\mathcal Mw(\tfrac 12)}{\mathcal M w(1)}\zeta(\tfrac12)X^{-1/2} +O_{\varepsilon,w}\big(N_E^{\varepsilon}X^{-3/4 +\varepsilon}\big).  
\end{multline*}
\end{lemma}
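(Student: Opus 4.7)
The plan is to evaluate $\sumt \log|d|$ by Mellin inversion, shift the contour to pick up contributions from a double pole at $s=1$ (yielding the $\log X$ and constant main terms) and a simple pole at $s=1/2$ (yielding the $X^{-1/2}$ secondary term), and finally divide by $W(X)$ using Remark~\ref{whatiswx}.

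First, since $\widetilde w$ is even, I would write
\[
\sumt \log|d| = 2\sum_{\substack{d\geq 1\\(d,N_E)=1}}\mu^2(d)\,\widetilde w(d/X)\log d.
\]
Applying Mellin inversion to $\widetilde w(d/X)$, interchanging the sum and integral, and recognizing the resulting Dirichlet series as $-H'(s)$ where
\[
H(s):=\sum_{\substack{d\geq 1\\(d,N_E)=1}}\frac{\mu^2(d)}{d^s}=\frac{\zeta(s)}{\zeta(2s)}\prod_{p\mid N_E}(1+p^{-s})^{-1},
\]
I would obtain
\[
\sumt\log|d|=\frac{1}{\pi i}\int_{(2)}\bigl(-H'(s)\bigr)X^s\,\mathcal M\widetilde w(s)\,ds.
\]
I would then shift the contour to $\Re(s)=1/4+\varepsilon$, crossing poles at $s=1$ and $s=1/2$.

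At $s=1$, the factor $-H'(s)$ has a double pole of shape $A/(s-1)^2+O(1)$ with $A=\zeta(2)^{-1}\prod_{p\mid N_E}(1+1/p)^{-1}$, while $X^s\mathcal M\widetilde w(s)$ is analytic. Extracting the $(s-1)^{-1}$-coefficient of the product yields $A\cdot X\cdot\bigl(\mathcal M\widetilde w(1)\log X+[\mathcal M\widetilde w]'(1)\bigr)$. The identity $A\cdot\mathcal M\widetilde w(1)=\mathcal M w(1)\prod_{p\mid N_E}(1-1/p)$, combined with the relation $\widehat w(0)=2\mathcal M w(1)$ and the leading asymptotic $W(X)=X\widehat w(0)\prod_{p\mid N_E}(1-1/p)+O((XN_E)^\varepsilon)$ from Remark~\ref{whatiswx}, produces the $\log X$ main term after division. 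Logarithmic differentiation of $\mathcal M\widetilde w(s)=\prod_{p\mid N_E}(1-p^{-2s})\,\zeta(2s)\,\mathcal M w(s)$ at $s=1$ expresses $[\mathcal M\widetilde w]'(1)/\mathcal M\widetilde w(1)$ as the sum of $\sum_{p\mid N_E}\tfrac{2\log p}{p^2-1}$, $2\zeta'(2)/\zeta(2)$, and $\mathcal M w'(1)/\mathcal M w(1)=\tfrac{2}{\widehat w(0)}\int_0^\infty w(x)\log x\,dx$, matching the three constant terms of the lemma. At $s=1/2$, $H(s)$ vanishes with $-H'(1/2)=-2\zeta(1/2)\prod_{p\mid N_E}(1+p^{-1/2})^{-1}$, while $\mathcal M\widetilde w$ has a simple pole with residue $\tfrac{1}{2}\,\mathcal M w(1/2)\prod_{p\mid N_E}(1-1/p)$. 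The product, simplified via $(1+p^{-1/2})^{-1}(1-1/p)=1-p^{-1/2}$ and divided by $W(X)$, delivers precisely the claimed $X^{-1/2}$ term.

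To control the shifted integral on $\Re(s)=1/4+\varepsilon$, I would combine the convexity bound for $\zeta(s)$, the RH-bound $|\zeta(2s)|^{-1}\ll(1+|t|)^{\varepsilon}$ on this line, the rapid decay of $\mathcal M\widetilde w(s)$ from Lemma~\ref{lemma fast decay}, and the elementary estimate $\prod_{p\mid N_E}|1+p^{-s}|^{-1}\ll_\varepsilon N_E^\varepsilon$ (as used in the proof of Lemma~\ref{lemmaweighted}), obtaining a total bound of $O_{\varepsilon,w}(N_E^\varepsilon X^{1/4+\varepsilon})$; dividing by $W(X)\asymp X$ gives the claimed error $O_{\varepsilon,w}(N_E^\varepsilon X^{-3/4+\varepsilon})$. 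I expect the main obstacle to be the bookkeeping at the double pole $s=1$: one must patiently expand $X^s$ and $\mathcal M\widetilde w(s)$ as Taylor series at $s=1$, isolate the $(s-1)^{-1}$-coefficient of the product with $A/(s-1)^2$, and rearrange the logarithmic derivative of $\mathcal M\widetilde w$ into the three-term form appearing in the statement, all while keeping track of the factor of $2$ from the even symmetry of $\widetilde w$ and the relation $\widehat w(0)=2\mathcal M w(1)$.
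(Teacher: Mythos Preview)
Your proposal is correct and follows essentially the same approach as the paper: Mellin inversion of $\widetilde w$, contour shift to $\Re(s)=1/4+\varepsilon$, extraction of residues at $s=1$ and $s=1/2$, and division by $W(X)$. The paper's proof differs only cosmetically, grouping the $\zeta(2s)$ factor from $\mathcal M\widetilde w(s)$ together with $H'(s)$ to form the function $Z(s)=\prod_{p\mid N_E}(1-p^{-s})\big[\zeta'(s)-2\zeta(s)\zeta'(2s)/\zeta(2s)+\zeta(s)\sum_{p\mid N_E}\log p/(p^s+1)\big]$, whose poles at $s=1$ and $s=1/2$ are then read off directly; your grouping keeps $\zeta(2s)$ inside $\mathcal M\widetilde w$ and instead uses the simple zero of $H$ at $s=1/2$, which amounts to the same computation.
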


\begin{proof}
The proof follows closely that of Lemma \ref{lemmaweighted}. One writes 
\begin{equation}
\sumt  \log |d| = -\frac 2{2\pi i}\int_{\Re(s) = 2} \bigg( \sum_{ \substack{d\geq 1 \\ (d,N_E)=1}} \frac{\mu^2(d)}{d^s}  \bigg)' \prod_{p\mid N_E} \left(1- \frac 1{p^{2s}} \right) \zeta(2s)  \mathcal Mw(s)X^sds,
\label{equation log d proof}
\end{equation} 
and pulls the contour of integration to the left until the line $\Re(s)=1/4+\varepsilon$. Note that under RH, the only poles of the function 
\begin{align*}Z(s):&= \bigg( \sum_{ \substack{d\geq 1 \\ (d,N_E)=1}} \frac{\mu^2(d)}{d^s}  \bigg)' \prod_{p\mid N_E} \left(1- \frac 1{p^{2s}} \right) \zeta(2s) \\&= \prod_{p\mid N_E} \left( 1-\frac 1{p^s}\right) \bigg[ \zeta'(s) -\frac{2\zeta(s)\zeta'(2s)}{\zeta(2s)}+\zeta(s)\sum_{p\mid N_E} \frac{\log p}{p^s+1}\bigg] 
\end{align*}
in the region $1/4<\Re(s)\leq 2$ are at $s=1$ and at $s=1/2$. The value of the residues of the integrand in \eqref{equation log d proof} are obtained from a straightforward computation.
\end{proof}
\begin{remark}
One can pull the contour of integration further to the left in \eqref{equation log d proof}, and obtain an unconditional estimate with an error term of size $O_{\varepsilon,E}(X^{\varepsilon})$. This estimate will contain terms of the form 
$$2\zeta(\rho/2) \mathcal M w(\rho/2) X^{\rho/2}\prod_{p\mid N_E} \left( 1-\frac 1{p^{\rho/2}}\right) ,$$
with $\rho$ running over the nontrivial zeros of $\zeta(s)$.
\end{remark}

We also prove a version of Lemma \ref{lemmaweighted} which will be important in the analysis of $\mathcal D^*(\phi;X)$.

\begin{lemma}
\label{lemma count of squarefree}
Fix $n\in \mathbb N$ and $\varepsilon>0$. Under the Riemann Hypothesis (RH), we have the estimate\footnote{Remark \ref{remarkgeneralinteger} also applies here.}

\begin{multline*}
\sumn \left(\frac dn\right)=  \kappa(n) \frac{X}{\zeta(2)} \widehat w(0)   \prod_{p\mid n}  \left(1+\frac 1{p} \right)^{-1} \prod_{p\mid N_E} \bigg(1+ \frac{1}{p} \bigg)^{-1} \prod_{p\mid (n,N_E)} \bigg(1+ \frac{1}{p} \bigg)\\+O_{\varepsilon,w}\big((N_E)^{\varepsilon}|n|^{\frac 38(1-\kappa(n))+\varepsilon}X^{\frac 14+\varepsilon}\big),
\end{multline*}
where
$$ \kappa(n) := \begin{cases}
1 &\text{ if } n = \square, \\
0 &\text{ otherwise.}
\end{cases}$$
\end{lemma}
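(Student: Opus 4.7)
The proof follows the template of Lemma \ref{lemmaweighted}, with the essential difference that Mellin inversion is applied directly to $w$ rather than to $\widetilde w$; the squarefree restriction is now carried by an explicit factor $\mu^2(d)$ in the Dirichlet series. Using that $w$ is even, I would first write
$$ \sumn \left(\tfrac{d}{n}\right) = \left(1 + \left(\tfrac{-1}{n}\right)\right) \sum_{\substack{d \geq 1 \\ (d, N_E) = 1}} \mu^2(d) \left(\tfrac{d}{n}\right) w\!\left(\tfrac{d}{X}\right), $$
and then transform the inner sum by Mellin inversion into the contour integral
$$ \frac{1}{2\pi i} \int_{\Re(s) = 2} \frac{L\!\left(s, \left(\tfrac{\cdot}{n}\right)\right)}{L\!\left(2s, \left(\tfrac{\cdot}{n}\right)^{\!2}\right)} \prod_{p \mid N_E} \left(1 + \frac{\left(\tfrac{p}{n}\right)}{p^s}\right)^{\!-1} \mathcal M w(s)\, X^s\, ds, $$
using the identity $\prod_p (1 + (\tfrac{p}{n}) p^{-s}) = L(s, (\tfrac{\cdot}{n}))/L(2s, (\tfrac{\cdot}{n})^{2})$.

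The next step is to pull the contour leftwards to $\Re(s) = 1/4 + \varepsilon$. If $n$ is not a perfect square, then $L(s, (\tfrac{\cdot}{n}))$ is entire and, by RH for $\zeta$ and for $L(s, (\tfrac{\cdot}{n})^{2})$, the reciprocal $1/L(2s, (\tfrac{\cdot}{n})^{2})$ is holomorphic to the right of the line $\Re(s) = 1/4$; no residues are collected. If $n = \square$, one has $L(s, (\tfrac{\cdot}{n})) = \zeta(s) \prod_{p \mid n}(1 - p^{-s})$ and a simple pole at $s = 1$ is crossed. Computing its residue, multiplying by the outer factor $1 + (\tfrac{-1}{n}) = 2$, and using the identities $2\mathcal M w(1) = \widehat w(0)$, $(1-1/p)/(1-1/p^2) = (1+1/p)^{-1}$, together with
$$ \prod_{\substack{p \mid N_E \\ p \nmid n}} \left(1 + \tfrac{1}{p}\right)^{-1} = \prod_{p \mid N_E} \left(1 + \tfrac{1}{p}\right)^{-1} \prod_{p \mid (n, N_E)} \left(1 + \tfrac{1}{p}\right), $$
one recovers precisely the claimed main term.

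Finally I would estimate the shifted integral. On the line $\Re(s) = 1/4 + \varepsilon$, the convexity bound from \cite[(5.20)]{IK} yields $L(s, (\tfrac{\cdot}{n})) \ll_\varepsilon (|n|(1+|t|))^{3/8 + \varepsilon}$, which for $n = \square$ reduces to $(1+|t|)^{3/8+\varepsilon} n^\varepsilon$ (since in that case the $L$-function collapses to $\zeta(s)$ times a short Euler product). Under RH, the subconvex estimate $1/L(2s, (\tfrac{\cdot}{n})^{2}) \ll_\varepsilon (|n|(1+|t|))^\varepsilon$ holds. Combining these with the fast decay $\mathcal M w(s) \ll_K (1+|t|)^{-K}$ from Lemma \ref{lemma fast decay} and bounding the Euler product over $p \mid N_E$ by $N_E^\varepsilon$ exactly as in the proof of Lemma \ref{lemmaweighted}, the shifted integral is $\ll_\varepsilon N_E^\varepsilon |n|^{\frac{3}{8}(1 - \kappa(n)) + \varepsilon} X^{1/4 + \varepsilon}$, which matches the claimed error term after absorbing the bounded outer factor $1 + (\tfrac{-1}{n})$.

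The principal obstacle is that, in contrast with Lemma \ref{lemmaweighted}, the denominator $L(2s, (\tfrac{\cdot}{n})^{2})$ prevents shifting the contour past $\Re(s) = 1/4$ without RH; this is precisely why the error term carries the factor $X^{1/4+\varepsilon}$ rather than $X^\varepsilon$. The exponent $3/8$ in the $|n|$-aspect is a direct consequence of convexity on the line $\Re(s) = 1/4 + \varepsilon$ and could be improved via subconvexity (e.g.\ Burgess), but the stated bound is sufficient for the intended analysis of $\mathcal D^*(\phi;X)$.
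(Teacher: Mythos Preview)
Your proof is correct and follows essentially the same approach as the paper: Mellin inversion applied to $w$, contour shift to $\Re(s)=\tfrac14+\varepsilon$ (legitimate under RH since $L(2s,(\tfrac{\cdot}{n})^2)=\zeta(2s)\prod_{p\mid n}(1-p^{-2s})$), convexity for the numerator, and a Lindel\"of-type bound for $1/\zeta(2s)$ under RH. The paper is slightly more explicit about the last point, splitting the integral at $|\Im(s)|=5$ and invoking \cite[Thm.~13.23]{MV} to control $1/\zeta$ just to the right of the critical line, but your claim $1/\zeta(\tfrac12+2\varepsilon+it)\ll_\varepsilon (1+|t|)^\varepsilon$ under RH is valid and suffices.
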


\begin{proof}
Since $w(t)$ is even, we have
\begin{equation}
\sumn \left( \frac dn\right) = \left(1+\left( \frac{-1}n \right)\right)\underset{\substack{ d\geq 1 \\ (d,N_E)=1}}{{\sum}^*}  \wn \left( \frac dn\right).
\end{equation}
An application of Mellin inversion and a straightforward calculation shows that
\begin{align*}
J_n(X):=\sum_{ \substack{d\geq 1 \\ (d,N_E)=1}} \mu^2(d) \wn \left( \frac dn\right) =  \frac 1{2\pi i } \int_{\Re(s)=2} Z_{n,E}(s)\mathcal Mw(s) X^s ds,
\end{align*}
where $$ Z_{n,E}(s):= \frac{L\big(s,\left(\frac{\cdot}n \right)\big)}{\zeta(2s)} \prod_{p\mid N_E} \bigg(1+ \frac{\left(\frac pn \right)}{p^s} \bigg)^{-1} \prod_{p\mid n} \left( 1-\frac 1{p^{2s}} \right)^{-1}. $$

In the case when $n$ is not a square, $L(s,\left(\frac{\cdot}n \right))$ is holomorphic at $s=1$. We can thus shift the contour of integration to the left until the line $\Re(s)=\frac 14+\varepsilon$, since by the Riemann Hypothesis, the zeros of  $\zeta(2s)$ all have real part at most $\frac 14$. We apply the estimates \cite[(5.20)]{IK}, \cite[Thm.\ 13.23]{MV} and Lemma \ref{lemma fast decay} together with the rapid decay of $\mathcal Mw(s)$ on vertical lines (rather than following the proof of \cite[Thm.\ 13.24]{MV} directly), to obtain, for some $C>0$,
\begin{align*}
 J_n(X) &= \frac 1{2\pi i} \left( \int_{\substack{\Re(s)=\frac 14+\varepsilon \\ |\Im(s)|\leq 5 }}+ \int_{\substack{\Re(s)=\frac 14+\varepsilon \\ |\Im(s)|> 5}} \right) Z_{n,E}(s)\mathcal Mw(s) X^s ds
\\ & \ll_{\varepsilon,C,w} (N_E)^{\varepsilon} |n|^{\frac 38+\varepsilon}X^{\frac 14+\varepsilon}+(|n|N_E)^{\frac{\varepsilon}2}\int_{ |t|>5 } (|t|+4)^{\frac {C \log ((2\varepsilon)^{-1})}{\log\log (|t|+4)}}\frac{(|n|(|t|+1))^{\frac 38+\frac{\varepsilon}2}}{(|t|+1)^{2C \log((2\varepsilon)^{-1})+2}}   X^{\frac 14+\varepsilon}dt  \\&\ll_{\varepsilon,C} (N_E)^{\varepsilon} |n|^{\frac 38+\varepsilon}X^{\frac 14+\varepsilon} . 
\end{align*}

In the case where $n$ is a square, the proof is similar, except that the integral we are interested in is given by 
$$ K_n(X):=\frac 1{2\pi i } \int_{\Re(s)=2} \frac{\zeta(s)}{\zeta(2s)} \prod_{p\mid N_E} \bigg(1+ \frac{\left(\frac pn \right)}{p^s} \bigg)^{-1} \prod_{p\mid n} \left(1+\frac 1{p^s} \right)^{-1}\mathcal Mw(s) X^s ds.$$
Shifting the contour of integration to the left until $\Re(s)=\frac 14+\varepsilon$, we pick up the residue from the simple pole at $s=1$ and arrive at the formula
$$ K_n(X)= \frac {\mathcal M w(1) X}{\zeta(2)} \prod_{p\mid N_E} \bigg(1+ \frac{\left(\frac pn \right)}{p} \bigg)^{-1} \prod_{p\mid n}  \left(1+\frac 1{p} \right)^{-1} + O_{\varepsilon,w}\big((|n|N_E)^{\varepsilon} X^{\frac 14+\varepsilon}\big). $$
The error term comes from the same reasoning as before, except that we used the convexity bound for $\zeta(s)$ instead of that of $L\big(s,\left(\frac{\cdot}n \right)\big)$.
\end{proof}

\subsection{The Explicit Formula}

The fundamental tool to study the $1$-level density is the explicit formula; we will use Mestre's version \cite{Me}. Recall that $L=\log(N_E X^2/(2\pi e)^2)$.
\begin{lemma}[Explicit Formula]
Let $\phi$ denote a Schwartz function whose Fourier transform has compact support.  For $d$ square-free with $(d,N_E)=1$, we have the formula
\begin{multline} D_X(E_d;\phi)= \frac{\widehat \phi(0)}L \log \bigg(\frac{N_{E}d^2}{(2\pi)^2}\bigg)-\frac{2} L\int_0^{\infty} \left( \frac{\widehat \phi(x/L) e^{-x} }{1-e^{-x}} - \widehat \phi(0) \frac{e^{-x}}x \right)dx\\ -\frac 2L\sum_{p,m} \frac{(\alpha_E(p)^m+\beta_E(p)^m) \chi_d (p^m) \log p}{p^{m/2}} \widehat \phi\left( \frac{m \log p}{L} \right).
\label{equation explcit formula}
\end{multline}
\end{lemma}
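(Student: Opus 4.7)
The plan is to establish the formula via a standard contour integration argument applied to the completed $L$-function $\Lambda(s, E_d)$. First, introduce the test function $h(s) := \phi\big((s-\tfrac12)L/(2\pi i)\big)$, which is entire of exponential type (since $\widehat\phi$ has compact support), rapidly decaying on vertical lines, and satisfies $h(\rho_d) = \phi(\gamma_d L/(2\pi))$ and $h(1-s) = h(s)$ by the evenness of $\phi$. Since $\Lambda(s, E_d)$ is entire with zeros precisely at the nontrivial zeros of $L(s, E_d)$, applying the argument principle on a rectangle with vertical sides at $\Re(s) = c$ and $\Re(s) = 1-c$ for some $c > 1$ (the horizontal contributions vanishing in the limit via the rapid decay of $h$ combined with the standard $O(\log T)$ bound on $\Lambda'/\Lambda$ on zero-free horizontal lines) yields
\begin{equation*}
D_X(E_d; \phi) = \frac{1}{2\pi i}\left(\int_{(c)} - \int_{(1-c)}\right) \frac{\Lambda'}{\Lambda}(s, E_d) h(s)\, ds.
\end{equation*}

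Using the functional equation \eqref{functionaltwist} in the form $(\Lambda'/\Lambda)(s, E_d) = -(\Lambda'/\Lambda)(1-s, E_d)$, substituting $s \mapsto 1-s$ in the second contour, and invoking $h(1-s) = h(s)$, the two integrals combine into
\begin{equation*}
D_X(E_d; \phi) = \frac{1}{\pi i}\int_{(c)} \frac{\Lambda'}{\Lambda}(s, E_d)\, h(s)\, ds.
\end{equation*}
Taking the logarithmic derivative of $\Lambda(s, E_d)$ splits the integrand into three pieces: a conductor contribution $\log(\sqrt{N_E}|d|/(2\pi))$, the Gamma contribution $(\Gamma'/\Gamma)(s + \tfrac12)$, and the Dirichlet series contribution $(L'/L)(s, E_d)$. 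For the conductor piece, shifting the contour to $\Re(s) = 1/2$ and changing variables $t = (s - \tfrac12)L/(2\pi i)$ reduces its contribution to $(2\widehat\phi(0)/L)\log(\sqrt{N_E}|d|/(2\pi))$, which matches the first term of the claim. For the Dirichlet series piece, expand via the Euler product as $(L'/L)(s, E_d) = -\sum_{p,m}(\alpha_{E_d}(p)^m + \beta_{E_d}(p)^m) p^{-ms}\log p$; since $(d, N_E) = 1$ and $d$ is squarefree, a direct verification (treating primes $p \mid N_E$, $p \mid d$, and $p \nmid N_E d$ separately) shows that $\alpha_{E_d}(p) = \chi_d(p)\alpha_E(p)$ and $\beta_{E_d}(p) = \chi_d(p)\beta_E(p)$ hold for every prime $p$ (with the convention $\chi_d(p) = 0$ when $p \mid d$, in which case $E_d$ has additive reduction). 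Interchanging the sum and integral (absolutely convergent for $c > 1$) and shifting to $\Re(s) = 1/2$, the inner integral evaluates to $\int \phi(tL/(2\pi)) e^{-imt\log p}\, dt = (2\pi/L)\widehat\phi(m\log p/L)$, yielding the third term of the claim upon using $\chi_d(p)^m = \chi_d(p^m)$.

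The only step that requires more than routine bookkeeping is the Gamma contribution. The plan is to shift to $\Re(s) = 1/2$ and apply Malmst\'en's integral representation
\begin{equation*}
\frac{\Gamma'}{\Gamma}(1 + it) = \int_0^{\infty}\left(\frac{e^{-x}}{x} - \frac{e^{-(1+it)x}}{1 - e^{-x}}\right)dx.
\end{equation*}
The two terms inside are individually non-integrable near $x = 0$, so one must not swap the $t$- and $x$-integrations separately; instead one combines them first, exploits the smoothness bound $\widehat\phi(0) - \widehat\phi(x/L) = O(x/L)$ near the origin to secure absolute integrability of the combined integrand, and only then applies Fubini. Recognizing the inner $t$-integral as $\widehat\phi(x/L)$ then delivers exactly
\begin{equation*}
-\frac{2}{L}\int_0^{\infty}\left(\frac{\widehat\phi(x/L) e^{-x}}{1 - e^{-x}} - \widehat\phi(0)\frac{e^{-x}}{x}\right)dx,
\end{equation*}
i.e.\ the second term. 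The main obstacle throughout is exactly this careful handling of the divergence at $x = 0$ in the archimedean term; all other manipulations are standard contour shifts and Fourier inversions. Summing the three contributions finishes the proof.
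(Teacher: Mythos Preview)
Your proof is correct. The paper takes a shortcut: it invokes Mestre's explicit formula (\cite{Me}, p.~215) as a black box for weight-$2$ newforms on $\Gamma_0(N_E d^2)$, identifies the auxiliary test function $F(x) = \widehat\phi(x/L)/L$ via the relation $\Phi(s) = \int_\R F(x) e^{x(s-1/2)} dx$, and then checks the identity $\alpha_{f_E \otimes \chi_d}(p)^m + \beta_{f_E \otimes \chi_d}(p)^m = (\alpha_E(p)^m + \beta_E(p)^m)\chi_d(p^m)$ from the Euler product of the twisted newform. You instead derive the explicit formula from scratch by contour integration on $\Lambda'/\Lambda$, which is precisely what underlies Mestre's result. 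Your route is more self-contained and makes transparent where each of the three terms originates; the paper's is shorter because the archimedean bookkeeping (your Malmst\'en step) is already packaged inside Mestre's statement. One small remark: your justification of Fubini in the Gamma piece appeals to the bound $\widehat\phi(0) - \widehat\phi(x/L) = O(x/L)$, but that bound is really what guarantees convergence of the \emph{resulting} single-variable $x$-integral; absolute integrability of the $(t,x)$-double integrand near $x=0$ follows instead from the expansion $e^{-x}/x - e^{-(1+it)x}/(1-e^{-x}) = it + O(1)$ combined with the rapid decay of $\phi$ in $t$. This is cosmetic and does not affect the validity of your argument.
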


\begin{proof}
Recall that
$$ D_X(E_d;\phi) = \sum_{\gamma_d} \phi\left( \gamma_d \frac{L}{2\pi} \right), $$
where $\rho_d=\frac 12+i\gamma_d$ runs through the non-trivial zeros of $L(s,f_E\otimes \chi_d)$. Note that since $(d,N_E)=1$ and $d$ is square-free, Proposition 14.20 of \cite{IK} implies that $f_E \otimes \chi_d$ is a weight $2$ newform of level~$N_Ed^2$.

We take $\Phi(s):= \phi(\frac{-iL}{2\pi} (s-1/2))$ in the explicit formula on page 215 of \cite{Me}, which applies to any weight $2$ newform on $\Gamma_0(N_Ed^2)$. This yields the formula
\begin{multline}  D_X(E_d;\phi) =F(0) \big(\log (N_{E}d^2) - 2\log(2\pi)\big)\\-2 \int_0^{\infty} \left( \frac{F(x) e^{-x} }{1-e^{-x}} -  F(0) \frac{e^{-x}}x \right)dx -2\sum_{p,m} \frac{(\alpha_E(p)^m+\beta_E(p)^m) \chi_d(p^m) \log p}{p^{m/2}} F(m \log p),
\label{equation explicit 1}
\end{multline}
where the function $F(x)$ is such that
$$ \Phi(s) = \int_{\mathbb R} F(x) e^{x(s-1/2)} dx. $$
In \eqref{equation explicit 1} we have used the identity
$$\alpha_{f_E\otimes \chi_d}(p)^m+\beta_{f_E\otimes \chi_d}(p)^m = (\alpha_E(p)^m+\beta_E(p)^m) \chi_d(p^m). $$
To show this identity, note that since $f_E\otimes \chi_d$ is a newform, its $L$-function is given by
\begin{align*}
\sum_{n\geq 1} \frac{\lambda_{E}(n) \chi_d(n)}{n^s} &= \prod_p \left( 1-\frac{\alpha_{f_E\otimes \chi_d}(p)}{p^s} \right)^{-1} \left( 1-\frac{\beta_{f_E\otimes \chi_d}(p)}{p^s} \right)^{-1} \\ &= \prod_p \left( 1-\frac{\lambda_{E}(p)\chi_d(p)}{p^s} + \frac {\psi_{d^2N_E}(p)}{p^{2s}}\right)^{-1},
\end{align*}
where $\psi_k(n)$ is the principal character modulo $k$. Hence the choice $\alpha_{f_E\otimes \chi_d}(p) = \alpha_E(p) \chi_d(p) $ and $\beta_{f_E\otimes \chi_d}(p) = \beta_E(p)\chi_d(p)$ is consistent (note that $\chi_d(p)$ is real and the pair $(\alpha_{f_E\otimes \chi_d}(p),\beta_{f_E\otimes \chi_d}(p))$ is defined up to permutation).

We have that
$$ \phi(tL) = \Phi\left( \frac 12+2\pi it\right) = \int_{\mathbb R}   F(x) e^{2\pi it x } dx = \widehat F(-t); $$
hence $F(x)=\widehat \phi(x/L)/L$ gives us the desired choice of $\Phi(s)$, and \eqref{equation explcit formula} follows.
\end{proof}

\begin{corollary}
\label{corollary 1 level densities}
We have the following formulas for the $1$-level densities we are interested in (see \eqref{equation one level density} and \eqref{equation one level density star}):

\begin{multline} \mathcal D^*(\phi;X) =  \frac {\widehat \phi(0)}{ LW^*(X)} \sumn \log \bigg(\frac{N_{E}d^2}{(2\pi)^2}\bigg)-\frac{2} L\int_0^{\infty} \left( \frac{\widehat \phi(x/L) e^{-x} }{1-e^{-x}} - \widehat \phi(0) \frac{e^{-x}}x \right)dx \\ -\frac 2{L  W^*(X)  }\sum_{p,m} \frac{(\alpha_E(p)^m+\beta_E(p)^m) \log p}{p^{m/2}} \widehat \phi\left( \frac{m \log p}{L} \right) \sumn \left( \frac{d}{p^m}\right),
\label{equation one level density explicit formula squarefree}
\end{multline}

\begin{multline} \mathcal D(\phi;X) = \frac {\widehat \phi(0)}{LW(X)}  \sumt \log \bigg(\frac{N_{E}d^2}{(2\pi)^2}\bigg)  -\frac{2} L\int_0^{\infty} \left( \frac{\widehat \phi(x/L) e^{-x} }{1-e^{-x}} - \widehat \phi(0) \frac{e^{-x}}x \right)dx \\ -\frac 2{L W(X)  }\sum_{p,m} \frac{(\alpha_E(p)^m+\beta_E(p)^m) \log p}{p^{m/2}} \widehat \phi\left( \frac{m \log p}{L} \right) \sumt  \left( \frac{d}{p^m}\right).
\label{equation one level density explicit formula}
\end{multline}
(Recall the definition of $\widetilde w$ given in \eqref{wtildeintro}.)
\end{corollary}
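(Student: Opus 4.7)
The plan is to derive both formulas by substituting the explicit formula (Lemma preceding the corollary) into the definitions \eqref{equation one level density star} and \eqref{equation one level density}, then interchanging the order of summation. Since $\widehat\phi$ has compact support, only finitely many pairs $(p,m)$ satisfy $m\log p \leq L\cdot\mathrm{sup}(\mathrm{supp}\,\widehat\phi)$, so absolute convergence of all double sums is automatic and the interchange requires no further justification.

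For $\mathcal D^*(\phi;X)$, the outer sum is already restricted to square-free $d$ with $(d,N_E)=1$, so the explicit formula applies to every term. Distributing $\sumn$ over the three pieces of \eqref{equation explcit formula}: the first piece becomes the $\log$ term; the second piece is independent of $d$, so summing it against the weight $w(d/X)$ produces a factor of $W^*(X)$ which cancels the normalization; the third piece, after interchanging the $(p,m)$-sum with the $d$-sum, yields the last term of \eqref{equation one level density explicit formula squarefree}. Division by $W^*(X)$ gives the claimed formula.

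For $\mathcal D(\phi;X)$, the outer sum runs over all $d$ coprime to $N_E$ (not only square-free). The key reduction is the observation recorded in the remark after \eqref{equation quadratic twist}: writing $d = n^2 d'$ with $d'$ square-free and noting that $(d,N_E)=1 \Leftrightarrow (n,N_E)=(d',N_E)=1$, the $L$-functions $L(s,E_d)$ and $L(s,E_{d'})$ share the same nontrivial zeros, so $D_X(E_d;\phi) = D_X(E_{d'};\phi)$. Grouping according to the square-free part gives
\[
\sum_{(d,N_E)=1} w\!\left(\frac{d}{X}\right) D_X(E_d;\phi)
= \sumtt D_X(E_{d'};\phi) \sum_{\substack{n\geq 1 \\ (n,N_E)=1}} w\!\left(\frac{n^2 d'}{X}\right)
= \sumtt \widetilde{w}\!\left(\frac{d'}{X}\right) D_X(E_{d'};\phi),
\]
by the definition \eqref{wtildeintro} of $\widetilde{w}$; applying the same reduction to $W(X)$ (taking $\phi$ trivially) rewrites the normalization as $\sumtt \widetilde w(d'/X)$, so one is reduced to the square-free setting. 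From here, the argument is identical to the $\mathcal D^*$ case: apply the explicit formula to each square-free $d'$, and swap the order of the inner double sum with the sum over $(p,m)$.

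No genuine obstacle is expected; the only subtlety is the reduction to square-free $d'$ in the $\mathcal D$ case, which relies on the invariance of the nontrivial zeros under passage to the square-free part (hence on the fact that $L(s,E_d)/L(s,E_{d'})$ is a finite product of local Euler factors contributing only trivial zeros). Everything else is formal bookkeeping within a finite sum.
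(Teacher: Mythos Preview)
Your proposal is correct and follows essentially the same route as the paper: for $\mathcal D^*$ you sum the explicit formula over square-free $d$ directly, and for $\mathcal D$ you reduce to the square-free case via $D_X(E_d;\phi)=D_X(E_{d'};\phi)$, regroup to produce $\widetilde w$, observe that $W(X)=\sumtt\widetilde w(d'/X)$, and then proceed as before. The only cosmetic difference is that the paper justifies $D_X(E_d;\phi)=D_X(E_{\ell^2 d};\phi)$ by exhibiting the bijection $(x,y)\mapsto(x,\ell y)$ between $E_{\ell^2 d}(\mathbb F_p)$ and $E_d(\mathbb F_p)$ for $p\nmid \ell d N_E$, whereas you invoke the remark that the two $L$-functions share their nontrivial zeros; these are equivalent.
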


\begin{proof}
The idea is to sum \eqref{equation explcit formula} over the desired values of $d$ required to obtain $\mathcal D^*(\phi;X)$ and $\mathcal D(\phi;X)$, against the smooth weight $w(\tfrac dX)$. The first identity follows immediately from \eqref{equation explcit formula}.
For the second identity, note that for any integer $\ell \geq 1$,
$$ D_X(E_d;\phi) = D_X(E_{\ell ^2 d};\phi). $$
This follows from the fact that $L(s,E_d)$ and $L(s,E_{\ell^2 d})$ have the same nontrivial zeros, since $(x,y) \mapsto (x,\ell y)$ induces a bijection between the groups $E_{\ell^2 d}(\mathbb F_p)$ and $E_d(\mathbb F_p)$ for any $p\nmid \ell dN_E$.
Hence,
\begin{align}\label{alternative1levelformula}
W(X) \mathcal D(\phi;X) = \sum_{(d,N_E)=1} w\left( \frac dX \right) D_X(E_d;\phi) &= \sumnn \sum_{\substack{n \geq 1 \\ (n,N_E)=1}} w\left( \frac {n^2d}X \right) D_X(E_{d};\phi) \nonumber \\
&= \sumt D_X(E_{d};\phi).
\end{align}
Finally, by applying \eqref{equation explcit formula} and noting that 
\begin{equation}\label{thedubsarethesame}
W(X)=\sum_{(d,N_E)=1} w\left ( \frac dX \right) =\sumt,
\end{equation}
we deduce \eqref{equation one level density explicit formula}. 
\end{proof}

\section{The prime sum in $\mathcal D(\phi;X)$}\label{PRIMESUMSEC}

\label{section all d}
The goal of this section is to study the second prime sum appearing in Corollary \ref{corollary 1 level densities}, 
that is the term
\begin{equation}  -\frac 2{LW(X)}\sum_{p,m} \frac{(\alpha_E(p)^m+\beta_E(p)^m) \log p}{p^{m/2}} \widehat
 \phi\left( \frac{m \log p}{L} \right) \sumt \left( \frac{d}{p^m}\right) = S_{\text{odd}} +S_{\text{even}},
 \label{equation seperation Sodd Seven}
\end{equation}
where $S_{\text{odd}}$ and $S_{\text{even}}$ contain respectively the terms with odd and even $m$. In Appendix \ref{Appendix A}, we will see that $S_{\text{even}}$ appears as is in the Ratios Conjecture's prediction (see Theorem \ref{extendedoneleveldensityresult}). Bounding $S_{\text{odd}}$ constitutes the heart of the paper, and sets the limit for both the allowable support for the test function $\phi$ as well as the size of the error term in Theorems \ref{main theorem} and \ref{second main theorem}.
Our analysis is inspired by that of Katz and Sarnak \cite{KS1}, who used Poisson summation to analyze such a quantity. This will be done in Lemma \ref{lemma:initialPoisson}, but we first show that the terms with odd $m\geq 3$ are negligible. In this section, we do not indicate the dependence on $\phi$ and $w$ of the implied constants in the error terms.

\begin{lemma}
\label{lemma S odd big m}
Fix $\varepsilon>0$. Assuming that $\sigma:= \text{sup}(\text{supp} \widehat \phi)< \infty, $ we have the bound
$$-\frac 2{LW(X)}\sum_{\substack{p \\ m \geq 3 \text{ odd}}} \frac{(\alpha_E(p)^m+\beta_E(p)^m) \log p}{p^{m/2}} \widehat \phi\left( \frac{m \log p}{L} \right) \sumt \left( \frac{d}{p^m}\right)\ll_{\varepsilon} N_E^{\varepsilon}X^{-1+\varepsilon}.$$
\end{lemma}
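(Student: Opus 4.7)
The plan is to exploit the fact that for odd $m\geq 1$, the Jacobi symbol satisfies $\left(\tfrac{d}{p^m}\right)=\left(\tfrac{d}{p}\right)^m=\left(\tfrac{d}{p}\right)$, because $\left(\tfrac{d}{p}\right)\in\{-1,0,1\}$. This crucial identity collapses the inner character sum to a quantity independent of $m$: namely
\[
\sumt \left(\frac{d}{p^m}\right) = \sumt \left(\frac{d}{p}\right).
\]
Since $p$ is prime it is never a square, so $\kappa(p)=0$ and the main term in Lemma \ref{lemmaweighted} (applied with $n=p$) vanishes, leaving the bound $\sumt \left(\tfrac{d}{p}\right) \ll_{\varepsilon} p^{1/2+\varepsilon}(XN_E)^{\varepsilon}$.

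Combining this with the lower bound $W(X)\gg X N_E^{-\varepsilon}$ from Remark \ref{whatiswx}, the Ramanujan-type bound $|\alpha_E(p)^m+\beta_E(p)^m|\leq 2$, the trivial bound on $\widehat{\phi}$, and the fact that $\widehat{\phi}\bigl(\tfrac{m\log p}{L}\bigr)=0$ unless $p^m\leq e^{\sigma L}\ll (N_E X^2)^{\sigma}$, the quantity to be bounded is at most
\[
\ll \frac{N_E^{\varepsilon}X^{\varepsilon}}{LX}\sum_{m\geq 3\text{ odd}}\sum_{p\leq (N_E X^2)^{\sigma/m}}\frac{\log p}{p^{(m-1)/2-\varepsilon}}.
\]

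For each odd $m\geq 5$, the exponent $(m-1)/2-\varepsilon\geq 2-\varepsilon$ ensures absolute convergence of the inner $p$-sum uniformly in the truncation, so these terms contribute $O(1)$ in total. The only delicate case is $m=3$, where the inner sum is $\sum_{p\leq Y}\log p/p^{1-\varepsilon}\ll_{\varepsilon} Y^{\varepsilon}$ (by Chebyshev, or equivalently by partial summation from the prime number theorem), with $Y\ll (N_E X^2)^{\sigma/3}$. Since $\sigma$ is fixed, the resulting factor $(N_EX^2)^{\sigma\varepsilon/3}$ is absorbed into $N_E^{\varepsilon}X^{\varepsilon}$ after a harmless rescaling of $\varepsilon$, and the overall bound is $O_\varepsilon(N_E^{\varepsilon}X^{-1+\varepsilon})$. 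The only real obstacle in the argument is noticing the collapse $\left(\tfrac{d}{p^m}\right)=\left(\tfrac{d}{p}\right)$ for odd $m$; without it, applying Lemma \ref{lemmaweighted} directly with $n=p^m$ would produce a factor $(p^m)^{1/2+\varepsilon}$, and even the $m=3$ contribution would only be controlled by $X^{2\sigma/3-1+\varepsilon}$, which is far too weak.
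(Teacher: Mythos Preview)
Your proof is correct and follows essentially the same approach as the paper's own (one-sentence) argument: reduce $\left(\tfrac{d}{p^m}\right)$ to $\left(\tfrac{d}{p}\right)$ for odd $m$, invoke Lemma~\ref{lemmaweighted} with $n=p$, and combine the resulting $p^{1/2+\varepsilon}(XN_E)^{\varepsilon}$ bound with $|\alpha_E(p)|,|\beta_E(p)|\leq 1$ and Remark~\ref{whatiswx}. Your write-up simply makes the routine summation over $p$ and $m$ explicit, isolating $m=3$ as the borderline case.
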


\begin{proof}
After noting that $\left( \frac{d}{p^m}\right)=\left( \frac{d}{p}\right)$, this is a direct application of Lemma \ref{lemmaweighted}, combined with the bounds $|\alpha_E(p)|, |\beta_E(p)| \leq 1$ and Remark \ref{whatiswx}.
\end{proof}

We now adapt the arguments of \cite{KS1}.

\begin{lemma}
Fix $\varepsilon >0$. Assuming that $\sigma:= \text{sup}(\text{supp} \widehat \phi)< \infty, $ we have the following:
\begin{multline*}
S_{\text{odd}}  =   -\frac {2X}{LW(X)} \sum_{0\leq k\leq  10 \log X} \sum_{\ell \mid N_E} \frac{\mu(\ell)}{\ell} \sum_{\substack{p \nmid 2 N_E }} \frac{ \left( \frac{-\ell}p\right)\overline{\epsilon_p} \lambda_E(p) \log p}{p^{1+2k}}\widehat \phi\left( \frac{\log p}{L} \right)   \sum_{t \in \mathbb Z}\left( \frac t{p} \right) \widehat w\left( \frac{Xt}{p^{1+2k}\ell}\right)\\+O_{\varepsilon}\left( N_E^{\varepsilon} X^{-1+\varepsilon}\right),
\end{multline*}
\label{lemma:initialPoisson}
where
$$\epsilon_p:= \begin{cases}
1 &\text{ if } p\equiv 1 \bmod 4, \\
i &\text{ if } p\equiv 3 \bmod 4.
\end{cases} $$
\end{lemma}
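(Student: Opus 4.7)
The first step is to apply Lemma \ref{lemma S odd big m}, which absorbs the $m\geq 3$ odd contribution into $O_\varepsilon(N_E^\varepsilon X^{-1+\varepsilon})$ and leaves
\begin{equation*}
-\frac{2}{LW(X)}\sum_p\frac{\lambda_E(p)\log p}{\sqrt p}\,\widehat\phi\!\left(\frac{\log p}{L}\right)T_p,\qquad T_p:=\sumt\left(\frac{d}{p}\right).
\end{equation*}
For each of the finitely many primes $p\mid 2N_E$, Lemma \ref{lemmaweighted} with $n=p$ (which is never a square) gives $T_p\ll_\varepsilon p^{1/2+\varepsilon}(N_EX)^\varepsilon$; since the support of $\widehat\phi$ forces $p\ll X^{2\sigma}$, the sum of these contributions is also absorbed by $O_\varepsilon(N_E^\varepsilon X^{-1+\varepsilon})$. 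It remains to analyse $p\nmid 2N_E$.

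For such $p$, I would unfold $\widetilde w(d/X)=\sum_{(n,N_E)=1}w(n^2d/X)$ and use the bijection $(d,n)\leftrightarrow m=n^2d$ between pairs of $d\in\mathbb Z\setminus\{0\}$ square-free, $n\geq 1$, $(dn,N_E)=1$, and nonzero integers $m$ coprime to $N_E$. Since $\bigl(\tfrac{n^2d}{p}\bigr)=\bigl(\tfrac{d}{p}\bigr)$ when $p\nmid n$ and vanishes otherwise, extracting $m=p^{2j}m'$ with $(m',p)=1$ gives
\begin{equation*}
T_p=\sum_{j\geq 0}\sum_{\substack{m'\in\mathbb Z\setminus\{0\}\\(m',pN_E)=1}}w\!\left(\frac{p^{2j}m'}{X}\right)\!\left(\frac{m'}{p}\right).
\end{equation*}
Removing the condition $(m',N_E)=1$ by M\"obius ($m'=\ell m''$ with $\ell\mid N_E$ and automatically $(\ell,p)=1$) and dropping the now-redundant condition $(m'',p)=1$ (killed by the character) yields
\begin{equation*}
T_p=\sum_{j\geq 0}\sum_{\ell\mid N_E}\mu(\ell)\!\left(\frac{\ell}{p}\right)\sum_{m''\in\mathbb Z}w\!\left(\frac{p^{2j}\ell m''}{X}\right)\!\left(\frac{m''}{p}\right).
\end{equation*}

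Next I would apply Poisson summation twisted by the primitive character $\chi_p:=\bigl(\tfrac{\cdot}{p}\bigr)$, whose Gauss sum equals $\tau(\chi_p)=\epsilon_p\sqrt p$. With $c=p^{2j}\ell$, this yields
\begin{equation*}
\sum_{m\in\mathbb Z}w\!\left(\frac{cm}{X}\right)\chi_p(m)=\frac{\epsilon_p X}{c\sqrt p}\sum_{t\in\mathbb Z}\chi_p(t)\,\widehat w\!\left(\frac{Xt}{cp}\right).
\end{equation*}
Combining this with the elementary identity $\epsilon_p\bigl(\tfrac{\ell}{p}\bigr)=\overline{\epsilon_p}\bigl(\tfrac{-\ell}{p}\bigr)$, which is immediate from $\epsilon_p^2=\bigl(\tfrac{-1}{p}\bigr)$, reproduces exactly the inner shape appearing in the lemma.

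Finally, I would truncate the sum over $j$ at $j\leq 10\log X$. For $j$ beyond this threshold the factor $p^{-(2j+1/2)}$ from Poisson, paired with a smoothed P\'olya--Vinogradov bound on the inner character sum over $t$ (or equivalently, with the Schwartz decay of $w$ applied to the pre-Poisson form), produces a tail that is super-polynomially small in $X$ and is absorbed into the error. I expect the main technical point to be precisely this truncation estimate, together with the bookkeeping needed for the Poisson step to run over all of $\mathbb Z$ rather than only positive $d$; the latter is what forces the appearance of $\overline{\epsilon_p}$ in the final formula, by packaging the parity of $\chi_p$ via $\epsilon_p^2=\bigl(\tfrac{-1}{p}\bigr)$ together with the evenness of $w$. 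Relabelling $j$ as $k$ then produces the stated identity.
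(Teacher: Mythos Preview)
Your argument is correct and matches the paper's proof in all essentials: reduce to $m=1$, discard $p\mid 2N_E$, unfold $\widetilde w$ to introduce the parameter $k$ (your $j$), detect $(m,N_E)=1$ by M\"obius over $\ell\mid N_E$, apply Poisson twisted by $\chi_p$, and truncate the $k$-sum. The only cosmetic differences are that the paper truncates at $k\leq 10\log X$ \emph{before} Poisson using the rapid decay of $w$ (your parenthetical option), and that the paper spells out the Poisson step as a Gauss-sum expansion followed by ordinary Poisson and the bijection $t=sp-b\ell$, which is exactly your packaged identity $\sum_m w(cm/X)\chi_p(m)=\frac{\epsilon_p X}{c\sqrt p}\sum_t\chi_p(t)\widehat w(Xt/(cp))$ together with $\epsilon_p(\tfrac{\ell}{p})=\overline{\epsilon_p}(\tfrac{-\ell}{p})$.
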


\begin{proof}

First note that the terms with $p\mid 2N_E$ in $S_{\text{odd}}$ are negligible, since by Lemma \ref{lemmaweighted} we have the bound
\begin{align*}
&\frac 2{LW(X)} \sum_{\substack{ p\mid 2N_E}} \frac{ (\alpha_E(p)+\beta_E(p)) \log p}{p^{1/2}} \widehat \phi\left( \frac{ \log p}{L} \right)  \sumt\left( \frac{d}{p}\right)  \\
&\ll_{\varepsilon} \frac 1X \sum_{p\mid 2N_E}  \log p (N_EXp)^{\varepsilon/3} \ll_{\varepsilon} N_E^{\epsilon}X^{-1+\varepsilon}.
\end{align*}
Note also that by the definition of $\widetilde w(x)$, we have for $p\nmid 2N_E$ the identity
$$  \sumt \left( \frac{d}{p}\right)= \sum_{k\geq 0} \sum_{(d,N_E)=1} w\left( \frac{d}{X/p^{2k}}\right)\left( \frac{d}{p}\right) .   $$
Recall that $\alpha_E(p)+\beta_E(p)=\lambda_E(p)$. Therefore, by Lemma \ref{lemma S odd big m} we have that
$$S_{\text{odd}} = -\frac 2{LW(X)} \sum_{0 \leq k\leq 10 \log X} \sum_{\substack{p \nmid 2N_E }} \frac{ \lambda_E(p) \log p}{p^{1/2}} \widehat \phi\left( \frac{ \log p}{L} \right)  \sum_{(d,N_E)=1} w\left( \frac d{X/p^{2k}}\right)\left( \frac{d}{p}\right) +O_{\varepsilon}(N_E^{\varepsilon}X^{-1+\varepsilon}). $$
In the last expression we have removed the terms with $k>10 \log X$, since by the rapid decay of $w(t)$, their sum is (we write $c_E:=N_E/(2\pi e)^2$)
$$\ll \frac{N_E^{\varepsilon}}{X} \sum_{ k> 10 \log X} \sum_{\substack{p \leq (c_EX^2)^{\sigma} }} \frac{ \log p}{p^{1/2}} \sum_{(d,N_E)=1} \left( \frac X{dp^{2k}}\right)^{2} \ll \frac {N_E^{\varepsilon}X}{2^{30 \log X}}  \sum_{\substack{p }} \frac{ \log p}{p^{3/2}} \ll N_E^{\varepsilon}X^{-1}.  $$

We now introduce additive characters using Gauss sums. These characters have the advantage of being smooth functions of their argument and will thus allow us to use Poisson summation. For $p$ an odd prime and $Y>0$ we write (see \cite[Sections 2 and 9]{D} for the definition and properties of the Gauss sum $\tau(\chi)$)
\begin{align} \label{first transformation} \sum_{(d,N_E)=1 }  w\left(\frac dY \right)\left( \frac{d}{p}\right)&= \sum_{\ell \mid N_E} \mu(\ell) \sum_{r \in \mathbb Z} w\left(\frac {r \ell}Y \right)\left( \frac{r \ell}{p}\right) \\&=  \sum_{\ell \mid N_E} \mu(\ell) \sum_{r \in \mathbb Z}  w\left(\frac{r \ell}Y \right) \frac{1}{\tau\left( \left( \frac{\cdot}{p}\right)\right)} \sum_{b\bmod p} \left( \frac bp\right) e\left(\frac{r \ell b}p \right) \notag \\&= \sum_{\ell \mid N_E} \mu(\ell) \frac{\overline{\epsilon_p}}{p^{\frac 12}}  \sum_{b\bmod p} \left( \frac {b}p\right) \sum_{r \in \mathbb Z} w\left(\frac{r \ell}Y \right)  e\left(\frac{r \ell b}p \right). \notag
\end{align}
Our expression for $S_{\text{odd}}$ is now
\begin{multline}
S_{\text{odd}} = -\frac{2}{LW(X)}\sum_{0\leq k\leq  10 \log X} \sum_{\ell \mid N_E} \mu(\ell)  \Bigg[\sum_{\substack{ p \nmid 2\ell N_E}} \frac{\overline{\epsilon_p} \lambda_E(p) \log p}{p}\widehat \phi\left( \frac{ \log p}{L} \right)  \\  \times \sum_{b\bmod p} \left( \frac bp\right)  \sum_{r \in \mathbb Z}  w\left(   \frac {r\ell}{X/p^{2k}}\right) e\left(\frac{r \ell b}p \right)\Bigg]
+O_{\varepsilon}(N_E^{\varepsilon}X^{-1+\varepsilon}).
\label{equation Sodd to be poissonized}
\end{multline}
Notice that we removed the terms with $p\mid \ell$ since they are all zero. This can be seen from the last expression using the orthogonality of $\left( \frac {\cdot} p\right)$, and is even more apparent in \eqref{first transformation}. We are ready to apply Poisson summation in \eqref{equation Sodd to be poissonized}:
$$  \sum_{r \in \mathbb Z}  w\left(\frac{r\ell}Y \right) e\left(\frac{r \ell b}p \right) =\frac Y{\ell} \sum_{s \in \mathbb Z} \widehat w\left( Y\left( \frac{s}{\ell} - \frac {b}p\right)\right), $$
which yields the expression
\begin{multline*}
S_{\text{odd}} = -\frac{2X}{LW(X)} \sum_{0\leq k\leq  10 \log X}\sum_{\ell \mid N_E} \frac{\mu(\ell)}{\ell}  \Bigg[\sum_{\substack{ p\nmid 2\ell N_E}} \frac{\overline{\epsilon_p} \lambda_E(p) \log p}{p^{1+2k}}\widehat \phi\left( \frac{ \log p}{L} \right) \\  \times \sum_{b\bmod p} \left( \frac bp\right)  \sum_{s \in \mathbb Z} \widehat w\left(  \frac{X}{p^{2k}}\left( \frac s{\ell}- \frac {b}p\right)\right)\Bigg]
+O_{\varepsilon}(N_E^{\varepsilon}X^{-1+\varepsilon}).
\end{multline*}
Note that as $s$ runs through the integers and $b$ runs through a complete residue system modulo $p$, the variable $t:=sp-b\ell$ runs through all integers (the fact that $(\ell,p)=1$ is crucial here). In other words, the following map is a group isomorphism:
\begin{align*}
f_{\ell,p} : \mathbb Z/ p\mathbb Z \times \mathbb Z &\longrightarrow \mathbb Z \\
 (b,s) &\longmapsto sp-b\ell.
\end{align*}
Combining this with the fact that $\left( \frac{b}{p}\right) = \left( \frac{-\ell^{-1}(t-sp)}{p}\right) = \left( \frac{-\ell}{p}\right) \left( \frac{t}{p}\right) $, we obtain
\begin{multline*}
S_{\text{odd}} = -\frac{2X}{LW(X)} \sum_{0\leq k\leq  10 \log X} \sum_{\ell \mid N_E} \frac{\mu(\ell)}{\ell}  \Bigg[\sum_{\substack{  p\nmid 2\ell N_E}} \frac{ \left(\frac{-\ell}p \right)\overline{\epsilon_p} \lambda_E(p) \log p}{p^{1+2k}}\widehat \phi\left( \frac{ \log p}{L} \right) \\ \times \sum_{t\in\mathbb Z} \left( \frac tp\right)  \widehat w\left(  \frac {Xt}{ p^{1+2k}\ell} \right)\Bigg]
+O_{\varepsilon}(N_E^{\varepsilon}X^{-1+\varepsilon}).
\end{multline*}
\end{proof}

\begin{lemma}
\label{lemma:smallp}
Fix $\varepsilon>0$. We have the bound
$$S_1:=\sum_{0\leq k\leq  10 \log X} \sum_{\ell \mid N_E} \frac{\mu(\ell)}{\ell} \sum_{\substack{p \nmid 2 N_E \\ p\leq X^{ \frac{1-\varepsilon}{2k+1} } }} \frac{ \left( \frac{-\ell}p\right)\overline{\epsilon_p} \lambda_E(p) \log p}{p^{1+2k}}\widehat \phi\left( \frac{\log p}{L} \right)   \sum_{t \in \mathbb Z}\left( \frac t{p} \right) \widehat w\left( \frac{Xt}{p^{1+2k}\ell}\right) \ll_{\varepsilon,E} X^{-1}. $$
\end{lemma}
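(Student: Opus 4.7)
The plan is to exploit the rapid decay of $\widehat w$ (which is Schwartz because $w$ is Schwartz). The restriction $p\leq X^{(1-\varepsilon)/(2k+1)}$, equivalently $p^{1+2k}\leq X^{1-\varepsilon}$, forces the argument of $\widehat w$ to be very large for every nonzero $t$, so the inner sum over $t$ must be negligible.

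First I would separate off $t=0$, which contributes nothing thanks to $\bigl(\tfrac{0}{p}\bigr)=0$. For the remaining $t\neq 0$, the inequality $\ell\leq N_E$ together with $p^{1+2k}\leq X^{1-\varepsilon}$ gives
$$\left|\frac{Xt}{p^{1+2k}\ell}\right|\geq \frac{|t|\,X^{\varepsilon}}{N_E}.$$
Applying the Schwartz bound $|\widehat w(y)|\ll_A (1+|y|)^{-A}$ for any $A>0$, I would deduce
$$\sum_{t\neq 0}\left|\widehat w\!\left(\frac{Xt}{p^{1+2k}\ell}\right)\right|\ll_A \sum_{t\geq 1}\left(1+\frac{tX^{\varepsilon}}{N_E}\right)^{-A}\ll_A N_E^{A}X^{-A\varepsilon},$$
uniformly in $p$, $\ell$, and $k$, provided $A\geq 2$.

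Next I would bound the outer sums trivially. The Ramanujan--Petersson bound yields $|\lambda_E(p)|\leq 2$, while $\widehat\phi$, $\overline{\epsilon_p}$, and the Legendre symbol $\bigl(\tfrac{-\ell}{p}\bigr)$ contribute $O(1)$. The $p$-sum is at most $\sum_p \log p/p^{1+2k}$, which is $O(\log X)$ for $k=0$ and $O(1)$ for $k\geq 1$; summing over $0\leq k\leq 10\log X$ gives $O((\log X)^2)$, while the sum over $\ell\mid N_E$ contributes $O_E(1)$. Combining these,
$$|S_1|\ll_{A,E}\ N_E^{A}X^{-A\varepsilon}(\log X)^2,$$
so choosing $A=\lceil 2/\varepsilon\rceil$ (which depends only on $\varepsilon$) gives the claimed bound $|S_1|\ll_{\varepsilon,E}X^{-1}$ with room to spare.

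There is no real obstacle here: Poisson summation has already transferred the small-$p$ regime into Schwartz decay of $\widehat w$ at the dual scale $X^{\varepsilon}$, which is much more than needed. The genuine analytic work is postponed to the complementary range $p>X^{(1-\varepsilon)/(2k+1)}$, where cancellation in the Legendre-symbol sum $\sum_{t}\bigl(\tfrac{t}{p}\bigr)\widehat w(\cdot)$ must be extracted by more delicate character-sum techniques (e.g.\ the P\'olya--Vinogradov inequality, as foreshadowed in the discussion following Theorem~\ref{main theorem}).
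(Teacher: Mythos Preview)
Your proof is correct and follows essentially the same approach as the paper: the $t=0$ term vanishes, and for $t\neq 0$ the Schwartz decay of $\widehat w$ together with $p^{1+2k}\leq X^{1-\varepsilon}$ makes the inner sum $\ll_{A,E} X^{-A\varepsilon}$, after which the outer sums are bounded trivially. The paper's version is marginally tighter (it obtains $X^{-M\varepsilon}\log X$ with $M=1+\max(10,\varepsilon^{-1})$ rather than your $(\log X)^2$ loss), but the argument is the same.
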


\begin{proof}
Letting $M=1+\max(10,\varepsilon^{-1})$, we have by the rapid decay of $\widehat w$ that
$$ S_1 \ll_{\varepsilon} \sum_{0\leq k\leq  10 \log X} \sum_{\ell \mid N_E} \frac 1{\ell}  \sum_{\substack{p \nmid 2 N_E \\ p\leq X^{ \frac{1-\varepsilon}{2k+1} } }} \frac{ \log p}{p^{1+2k}} \sum_{ 0\neq t \in \mathbb Z} \left(\frac{\ell}{tX^{\varepsilon}}\right)^M \ll_{M,E} X^{-M\varepsilon} \log X. $$
\end{proof}

\begin{lemma}
\label{lemma:0.6}
Fix $K\in \mathbb N$ and $\varepsilon>0$. If $\sigma :=$sup$(\text{supp}\widehat \phi)<\infty$, then we have the bound
\begin{multline*}S_{2,K}:=\sum_{K\leq k\leq  10 \log X} \sum_{\ell \mid N_E} \frac{\mu(\ell)}{\ell} \sum_{\substack{p \nmid 2 N_E \\ p> X^{ \frac{1-\varepsilon}{2k+1} } }} \frac{ \left( \frac{-\ell}p\right)\overline{\epsilon_p} \lambda_E(p) \log p}{p^{1+2k}}\widehat \phi\left( \frac{\log p}{L} \right)   \sum_{t \in \mathbb Z}\left( \frac t{p} \right) \widehat w\left( \frac{Xt}{p^{1+2k}\ell}\right) \\ \ll_{\varepsilon,E} X^{-\max(\frac{4K-1}{4K+1},1-2\sigma)+\varepsilon}.
\end{multline*}
\end{lemma}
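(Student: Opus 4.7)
My approach rests on two complementary bounds for the inner character sum
$$T_{p,k,\ell} := \sum_{t\in\mathbb{Z}}\left(\frac{t}{p}\right)\widehat w\!\left(\frac{Xt}{p^{1+2k}\ell}\right).$$
Since $\widehat w$ is Schwartz, $T_{p,k,\ell}$ has at most $O(p^{1+2k}\ell X^{-1+\varepsilon})$ non-negligible terms, and the $t=0$ term vanishes because $(0/p)=0$; this yields the trivial bound $T_{p,k,\ell} \ll_{\varepsilon,w} p^{1+2k}\ell\, X^{-1+\varepsilon}$. On the other hand, partial summation applied to the classical P\'olya--Vinogradov estimate $\sum_{n\leq N}(n/p)\ll \sqrt p\log p$, together with the uniform total-variation bound on $t\mapsto \widehat w(Xt/(p^{1+2k}\ell))$, gives the complementary P\'olya--Vinogradov bound $T_{p,k,\ell}\ll \sqrt p\,\log p$, valid uniformly in $p,k,\ell$.

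Using $|\lambda_E(p)|\leq 2$ and noting that the factor $\widehat\phi(\log p/L)$ restricts the prime sum to $p\leq X^{2\sigma+\varepsilon}$, I would split the sum over $p$ at the balance point $Q_k := X^{2/(4k+1)}$ where the two bounds for $T_{p,k,\ell}$ coincide (both giving $\sqrt{Q_k}=Q_k^{1+2k}/X=X^{1/(4k+1)}$ at $p=Q_k$). For $X^{(1-\varepsilon)/(2k+1)}< p \leq \min(Q_k, X^{2\sigma+\varepsilon})$, the trivial bound gives a contribution $\ll X^{-1+\varepsilon}\min(Q_k, X^{2\sigma+\varepsilon})$ after the Chebyshev estimate $\sum_{p\leq Y}\log p \ll Y$. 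For $Q_k<p\leq X^{2\sigma+\varepsilon}$ (a non-empty range precisely when $\sigma>1/(4k+1)$), the P\'olya--Vinogradov bound contributes $\ll X^\varepsilon\sum_{p>Q_k} p^{-(1/2+2k)}\log p\ll Q_k^{1/2-2k}X^\varepsilon = X^{-(4k-1)/(4k+1)+\varepsilon}$, with convergence of the tail ensured by $k\geq K\geq 1$. A direct algebraic check reveals that the two contributions match at the balance point, so the combined per-$k$ bound is $X^{-\max((4k-1)/(4k+1),\,1-2\sigma)+\varepsilon}$ (the first exponent coming from the case where both ranges exist, $\sigma\geq 1/(4k+1)$, and the second from the case $\sigma<1/(4k+1)$ where only the trivial range is present).

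The decisive observation is that $(4k-1)/(4k+1)$ is strictly increasing in $k$, so $\max((4k-1)/(4k+1),1-2\sigma)$ is non-decreasing in $k$, and the per-$k$ bound is therefore non-increasing in $k$; hence the dominant contribution comes from $k=K$. The outer sums $\sum_{\ell\mid N_E} 1/\ell \ll_\varepsilon N_E^\varepsilon$ (divisor bound) and $\sum_{K\leq k\leq 10\log X} 1 \ll \log X$ are absorbed into $X^\varepsilon$ (with the implied constant allowed to depend on $E$), producing the claimed bound
$$S_{2,K}\ll_{\varepsilon,E} X^{-\max((4K-1)/(4K+1),\,1-2\sigma)+\varepsilon}.$$

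The main obstacle I anticipate is handling the two regimes ($Q_K$ lying inside or outside the support window $p\leq X^{2\sigma+\varepsilon}$) uniformly in one formula, since it is crucial that the trivial and P\'olya--Vinogradov contributions meet \emph{exactly} at $Q_k$ with matching exponents, so that no extra logarithmic loss arises and the two complementary regions combine naturally into a single $\max$. A secondary technical point is verifying the uniformity in $p,k,\ell$ of the partial-summation step underpinning the P\'olya--Vinogradov bound, which reduces to the observation that $\widehat w$ being Schwartz yields $\|\widehat w'\|_{L^1(\mathbb R)}<\infty$, and this norm is preserved under the rescaling $t\mapsto Xt/(p^{1+2k}\ell)$.
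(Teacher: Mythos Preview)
Your proposal is correct and follows essentially the same route as the paper: the paper likewise splits the $p$-sum at $X^{1/(2k+1/2)}=X^{2/(4k+1)}=Q_k$, applies the trivial bound $T_{p,k,\ell}\ll_E p^{1+2k}/X$ for $p\leq Q_k$ and the P\'olya--Vinogradov bound $T_{p,k,\ell}\ll p^{1/2}\log p$ (via partial summation) for $p>Q_k$, and then observes that the worst exponent occurs at $k=K$. Your explicit remark that $(4k-1)/(4k+1)$ is increasing in $k$ is exactly the mechanism behind the paper's final bound, and your handling of the two regimes (whether $X^{2\sigma}$ lies below or above $Q_k$) matches the paper's case distinction $2\sigma \lessgtr (2k+1/2)^{-1}$.
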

\begin{proof}

We split the sum over $p$ into two parts, cutting at the point $X^{\frac 1{2k+1/2}}$. To bound the first of these sums, we first note that
\begin{equation}
 \sum_{t \in \mathbb Z}\left( \frac t{p} \right) \widehat w\left( \frac{Xt}{p^{1+2k}\ell}\right) \ll \sum_{0<|t| \leq \frac{p^{1+2k}\ell}X} 1+ \sum_{|t| > \frac{p^{1+2k}\ell}X} \left(\frac{p^{1+2k}\ell}{Xt}\right)^2\ll_{E} \frac{p^{1+2k}}{X},
 \label{equation:boundsumfouriertrans}
\end{equation}
and hence, writing $c_E:=N_E/(2\pi e)^2$,
\begin{align}
\sum_{K\leq k\leq  10 \log X}& \sum_{\ell \mid N_E} \frac{\mu(\ell)}{\ell} \sum_{\substack{p \nmid 2 N_E \\  X^{ \frac{1-\varepsilon}{2k+1}} < p \leq X^{\frac 1{2k+1/2}}  }} \frac{ \left( \frac{-\ell}p\right)\overline{\epsilon_p} \lambda_E(p) \log p}{p^{1+2k}}\widehat \phi\left( \frac{\log p}{L} \right)   \sum_{t \in \mathbb Z}\left( \frac t{p} \right) \widehat w\left( \frac{Xt}{p^{1+2k}\ell}\right) \label{equation improvable with Riemann} \\
&\ll_E  \sum_{K\leq k\leq  10 \log X}  \sum_{\substack{ X^{ \frac{1-\varepsilon}{2k+1}} < p \leq  \min\big(X^{\frac 1{2k+1/2}},(c_EX^2)^{\sigma}\big)  }} \frac {\log p}{X} \ll_E \frac{\log X}{X^{\max\big(\frac{2K-1/2}{2K+1/2},1-2\sigma\big)}}. \notag
\end{align} 
For those $k$ for which $2\sigma <(2k+1/2)^{-1} $, we have already covered the whole range of values of $p$ (for $X \gg_E 1$). For the remaining values of $k$, we bound the rest of the terms $\big(p> X^{\frac 1{2k+1/2}}\big)$ using the P\'olya-Vinogradov inequality, which reads
$$S(T):= \sum_{1\leq u\leq T}  \left( \frac u{p} \right)  \ll p^{\frac 12} \log p.$$
We then have
\begin{align*}
 \sum_{t \geq 0}\left( \frac t{p} \right) \widehat w\left( \frac{Xt}{p^{1+2k}\ell}\right) &= \int_{0}^{\infty} \widehat w\left( \frac{Xt}{p^{1+2k}\ell}\right) dS(t)= -\int_0^{\infty} \frac{X}{p^{1+2k}\ell} \widehat w'\left( \frac{Xt}{p^{1+2k}\ell}\right)S(t) dt \\ &\ll p^{\frac 12} \log p.
 \end{align*}
Treating the terms with $t<0$ in a similar way, we conclude that the second part of $S_{2,K}$, that is the sum over $ K' \leq k \leq 10 \log X$ with $K' := \max(K,\frac 14 ( \frac 1{\sigma}-1))$, satisfies
\begin{align*}
\sum_{K'\leq k\leq  10 \log X}& \sum_{\ell \mid N_E} \frac{\mu(\ell)}{\ell} \sum_{\substack{p \nmid 2 N_E \\ p> X^{ \frac{1}{2k+1/2}}   }} \frac{ \left( \frac{-\ell}p\right)\overline{\epsilon_p} \lambda_E(p) \log p}{p^{1+2k}}\widehat \phi\left( \frac{\log p}{L} \right)   \sum_{t \in \mathbb Z}\left( \frac t{p} \right) \widehat w\left( \frac{Xt}{p^{1+2k}\ell}\right) \\
&\ll_E  \sum_{K'\leq k\leq  10 \log X}  \sum_{\substack{ p> X^{ \frac{1}{2k+1/2}}  }} \frac {(\log p)^2}{p^{2k+1/2}} \ll \frac{\log X}{X^{\frac{2K'-1/2}{2K'+1/2}}}= \frac{\log X}{X^{\max\big(\frac{2K-1/2}{2K+1/2},1-2\sigma\big)}}.
\end{align*}
This concludes the proof.
\end{proof}

\begin{remark}
In the proof of Lemma \ref{lemma:0.6}, we could have used Burgess's bound to improve our estimate on the sum with $p\in \big[X^{\frac 1{2k+3/4}},X^{\frac 1{2k+1/2}}\big]$. This would have yielded a better overall bound when $2\sigma \in (\tfrac 1{2k+3/4},\tfrac 1{2k+1/2}) $ for some $k\geq 1$. However, we have chosen to carry out this improvement in a separate paper \cite{FPS}. 
\end{remark}

\begin{proposition}
Fix $m\in \mathbb N$ and assume that $\frac 1{2(2m+1)}\leq \sigma =$sup$(\text{supp}\widehat \phi)< \frac 1{2(2m-1)}$. Then, for any fixed $\varepsilon>0$, we have the bound
$$ S_{\text{odd}} \ll_{\varepsilon,E}  X^{-\max(\frac {4m-1}{4m+1},1-2\sigma)+\varepsilon}.$$
\label{proposition:unconditionalbound}
\end{proposition}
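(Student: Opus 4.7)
The strategy is to combine Lemmas \ref{lemma:initialPoisson}, \ref{lemma:smallp}, and \ref{lemma:0.6} by choosing the splitting parameter $K = m$, taking advantage of the fact that the support restriction $\mathrm{supp}(\widehat \phi) \subset [-\sigma, \sigma]$ renders the intermediate range of primes empty whenever $k$ is small. Starting from the expression for $S_{\text{odd}}$ provided by Lemma \ref{lemma:initialPoisson}, split the $k$-sum at $k = m$ to write
$$S_{\text{odd}} = T_1 + T_2 + O_\varepsilon\big(N_E^\varepsilon X^{-1+\varepsilon}\big),$$
where $T_1$ collects the contribution of $0 \leq k \leq m-1$ and $T_2$ that of $m \leq k \leq 10 \log X$. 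Applying Lemma \ref{lemma:0.6} directly with $K = m$ to $T_2$ yields
$$T_2 \ll_{\varepsilon, E} X^{-\max\big(\tfrac{4m-1}{4m+1},\,1-2\sigma\big) + \varepsilon},$$
which already matches the target bound.

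For $T_1$, fix $k \in \{0, 1, \ldots, m-1\}$ and choose an auxiliary parameter $\varepsilon' > 0$ with $\varepsilon' < \min\big(\varepsilon,\; 1 - 2\sigma(2m-1)\big)$; this is strictly positive thanks to the hypothesis $\sigma < \tfrac{1}{2(2m-1)}$. Split the inner prime sum at the threshold $p = X^{(1-\varepsilon')/(2k+1)}$. The contribution of the primes below this threshold is $\ll_{\varepsilon', E} X^{-1}$ by Lemma \ref{lemma:smallp}. For the primes above the threshold, observe that $\widehat \phi\big(\log p / L\big) = 0$ whenever $\log p > \sigma L$, i.e., whenever $p > \big(N_E X^2 / (2\pi e)^2\big)^\sigma$. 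Since $0 \leq k \leq m-1$ gives
$$\frac{1-\varepsilon'}{2k+1} \;\geq\; \frac{1-\varepsilon'}{2m-1} \;>\; 2\sigma,$$
the threshold $X^{(1-\varepsilon')/(2k+1)}$ exceeds $\big(N_E X^2 / (2\pi e)^2\big)^\sigma$ for all $X$ sufficiently large (depending on $E$, $m$, $\sigma$, $\varepsilon'$), so the ``large prime'' range is empty and contributes nothing. Summing the $m$ contributions gives $T_1 \ll_{\varepsilon, E} X^{-1}$, completing the proof.

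The main obstacle is essentially absent at this point: all of the analytic heavy lifting — Poisson summation, the P\'olya--Vinogradov bound, and the rapid decay of $\widehat w$ — has been carried out in the preceding lemmas. What remains is simply to identify the optimal splitting point $K = m$ and to exploit the support hypothesis on $\widehat \phi$ to discard the intermediate primes for $k < m$. In effect, the restriction $\sigma < \tfrac{1}{2(2m-1)}$ is precisely what guarantees that the delicate P\'olya--Vinogradov step in Lemma \ref{lemma:0.6} is not needed for those small values of $k$, so the bound produced by Lemma \ref{lemma:0.6} at $K = m$ is the dominant source of the exponent $-\max\big(\tfrac{4m-1}{4m+1},1-2\sigma\big)$.
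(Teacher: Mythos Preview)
Your proof is correct and follows essentially the same route as the paper: use Lemmas~\ref{lemma:initialPoisson} and~\ref{lemma:smallp} to reduce to primes $p > X^{(1-\varepsilon)/(2k+1)}$, exploit the support condition on $\widehat\phi$ to see that the terms with $k\le m-1$ vanish, and then invoke Lemma~\ref{lemma:0.6} with $K=m$. One small imprecision: your $T_2$ as defined includes \emph{all} primes $p\nmid 2N_E$, whereas $S_{2,m}$ in Lemma~\ref{lemma:0.6} is restricted to $p > X^{(1-\varepsilon)/(2k+1)}$, so the lemma does not apply to $T_2$ ``directly''; you must first discard the small primes in $T_2$ via Lemma~\ref{lemma:smallp} (cost $O_{\varepsilon,E}(X^{-1})$) before the remainder matches $S_{2,m}$.
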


\begin{proof}
By Lemma \ref{lemma:initialPoisson} and \ref{lemma:smallp}, we have that
\begin{multline*}
S_{\text{odd}}  =   -\frac {2X}{LW(X)} \sum_{0\leq k\leq  10 \log X} \sum_{\ell \mid N_E} \frac{\mu(\ell)}{\ell} \sum_{\substack{p \nmid 2 N_E \\ p> X^{\frac{1-\varepsilon}{2k+1}}}} \frac{ \left( \frac{-\ell}p\right)\overline{\epsilon_p} \lambda_E(p) \log p}{p^{1+2k}}\widehat \phi\left( \frac{\log p}{L} \right)  \\ \times \sum_{t \in \mathbb Z}\left( \frac t{p} \right) \widehat w\left( \frac{Xt}{p^{1+2k}\ell}\right)+O_{\varepsilon,E}\left(  X^{-1+\varepsilon}\right).
\end{multline*}
Note that for $X$ large enough in terms of $E$, the support of $\widehat \phi$ imposes the condition $p\leq X^{\frac 1{2m-1}-\eta}$ for some fixed $\eta>0$. Hence, all terms with $k\leq m-1$ in the above sum are identically zero. We then apply Lemma \ref{lemma:0.6} and obtain the bound
$$ S_{\text{odd}} \ll_{\varepsilon,E} X^{-\max(\frac {4m-1}{4m+1},1-2\sigma)+\varepsilon}.$$
\end{proof}

\begin{remark}
Notice that for $\sigma<\tfrac 12$, the error term $O_{\varepsilon,E}\big(X^{-\max(\frac {4m-1}{4m+1},1-2\sigma)+\varepsilon}\big)$ is always at most $O_{\varepsilon,E}(X^{-3/5+\varepsilon})$, which is sharper than the Ratios Conjecture's prediction. Moreover, if the support of $\widehat \phi$ is very small, then this error term is $O_{E}(X^{-1+\delta})$ with a very small $\delta$.
\end{remark}

In Proposition \ref{lemma:ECRHsmallsupport}, we give a sharper bound on $S_{\text{odd}}$, which is conditional on ECRH. We first give a standard application of ECRH.

\begin{lemma}
\label{lemma Riemann bound}
Assume ECRH. We have, for $m\in \mathbb Z_{\neq 0}$ and $y\geq 1$, the estimate
$$S_{m}(y):= \sum_{p\leq y} \left( \frac m p\right)  \lambda_E(p) \log p  \ll y^{\frac 12} (\log y)\log (2|m|yN_E). $$
\end{lemma}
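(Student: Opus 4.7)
The plan is to interpret $S_m(y)$ as the prime part of a Chebyshev-type sum attached to the twisted $L$-function $L(s,f_E\otimes \chi_m)$, then bound that sum via the standard explicit formula argument under ECRH.

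For $p \nmid mN_E$, the Euler factor of $L(s,f_E\otimes \chi_m)$ at $p$ equals $(1-\alpha_E(p)\chi_m(p)p^{-s})^{-1}(1-\beta_E(p)\chi_m(p)p^{-s})^{-1}$, so the $p^k$-th coefficient of $-L'/L(s,f_E\otimes \chi_m)$ is $(\alpha_E(p)^k+\beta_E(p)^k)\chi_m(p)^k\log p$. In particular, setting $\psi_{E,m}(y):=\sum_{n\leq y}\Lambda_{E,m}(n)$, the sum $S_m(y)$ differs from $\psi_{E,m}(y)$ only by (i) the prime powers $p^k$, $k\geq 2$, which by the Ramanujan--Petersson bound $|\alpha_E(p)|,|\beta_E(p)|\leq 1$ contribute $\ll y^{1/2}\log y$, and (ii) a finite number of bad primes ($p\mid mN_E$), whose total contribution is $\ll \log(|m|N_E)\log y$. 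So it suffices to bound $\psi_{E,m}(y)$.

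I would apply truncated Perron's formula on the line $\Re(s)=1+1/\log y$ to obtain
\begin{equation*}
\psi_{E,m}(y) \;=\; -\frac{1}{2\pi i}\int_{c-iT}^{c+iT}\frac{L'}{L}(s,f_E\otimes \chi_m)\frac{y^s}{s}\,ds + O\!\left(\frac{y\log^{2}(|m|yN_E)}{T}\right),
\end{equation*}
then shift the contour to $\Re(s)=-\tfrac12$, picking up only contributions from the non-trivial zeros (the function has no pole since $f_E$ is cuspidal, and this remains true after twisting by $\chi_m$; the trivial zeros and residue at $s=0$ contribute $O(\log(|m|yN_E))$). This yields the standard explicit formula
\begin{equation*}
\psi_{E,m}(y) \;=\; -\sum_{|\gamma|\leq T}\frac{y^{\rho}}{\rho}\;+\;O\!\left(\frac{y\log^{2}(|m|yN_E T)}{T}\right).
\end{equation*}
Under ECRH every non-trivial $\rho$ has real part $\tfrac12$, so the zero sum is bounded by $y^{1/2}\sum_{|\gamma|\leq T}|\rho|^{-1}$. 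Combining this with the Riemann--von Mangoldt estimate $N(T,f_E\otimes\chi_m)\ll T\log(|m|^{2}N_E T)$ (using that the analytic conductor of the twist is $\ll |m|^{2}N_E$) via Abel summation gives
\begin{equation*}
\sum_{|\gamma|\leq T}\frac{1}{|\rho|}\;\ll\;(\log T)\log(|m|N_E T).
\end{equation*}

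Choosing $T=y^{1/2}$ to balance the two error contributions then yields
\begin{equation*}
\psi_{E,m}(y)\;\ll\;y^{1/2}(\log y)\log(2|m|yN_E),
\end{equation*}
which together with the two small adjustments from step one proves the claimed bound on $S_m(y)$. The only real subtlety is verifying that the shift-of-contour is legitimate uniformly in $m$; this requires the standard convexity estimate for $L'/L$ on vertical lines, which in turn is controlled by the conductor $\ll |m|^2 N_E$. No surprise arises in the case where $m$ is a perfect square, because then $\chi_m$ is principal and one simply applies the same argument to $L(s,f_E)$ (which has no pole either), paying only a harmless bad-Euler-factor error.
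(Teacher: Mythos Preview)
Your argument is correct and is essentially the same as the paper's: the paper simply cites \cite[Thm.~5.15]{IK} for the GRH bound on the von Mangoldt sum attached to $L(s,f_E\otimes\chi_m)$ and then removes the prime-power contribution trivially, whereas you spell out the standard explicit-formula proof of that theorem. One small bookkeeping point: with your stated Perron remainder $y\log^{2}(|m|yN_E)/T$ and the choice $T=y^{1/2}$ you would obtain $y^{1/2}\log^{2}(2|m|yN_E)$ rather than $y^{1/2}(\log y)\log(2|m|yN_E)$; either take $T=y$ instead, or sharpen the Perron error to $\ll y(\log y)/T$ (valid since $|\Lambda_{E,m}(n)|\le 2\Lambda(n)$ is independent of $m$ and $N_E$).
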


\begin{proof}
The $L$-function
$$L(s,E_{m})= L(s,f_E\otimes \chi_{m}) = \sum_{n=1}^{\infty} \frac{\lambda_E(n) \left( \frac{m}n\right) }{n^s}  $$
is modular, and hence it admits an analytic continuation to the whole of $\mathbb C$ and has an Euler product and a functional equation. It is therefore an $L$-function in the sense of Iwaniec and Kowalski, and thus \cite[Thm.\ 5.15]{IK} takes the form
$$ \sum_{\substack{p^e\leq y \\ e\geq 1}} \big(\alpha_{E}(p)^e+\beta_{E}(p)^e\big)\log p \left( \frac{m}{p^e}\right) \ll  y^{\frac 12} (\log y) \log (2|m|^2N_Ey^2).  $$
The result follows by trivially bounding the contribution of prime powers.
\end{proof}

\begin{lemma}
\label{lemma:ECRHsmallsupport}
Fix $\varepsilon>0$ and $K\in \mathbb Z_{\geq 0}$, and assume ECRH. If $\sigma =$sup$(\text{supp}\widehat \phi)<\infty$, then we have the bound\footnote{In the case $K=0$, we adopt the convention that $\min(-1+\tfrac 1{4K},-1+\sigma) = -1+\sigma.$}
\begin{multline*}S_{2,K}:=\sum_{K\leq k\leq  10 \log X} \sum_{\ell \mid N_E} \frac{\mu(\ell)}{\ell} \sum_{\substack{p \nmid 2 N_E \\ p> X^{ \frac{1-\varepsilon}{2k+1} } }} \frac{ \left( \frac{-\ell}p\right)\overline{\epsilon_p} \lambda_E(p) \log p}{p^{1+2k}}\widehat \phi\left( \frac{\log p}{L} \right)   \sum_{t \in \mathbb Z}\left( \frac t{p} \right) \widehat w\left( \frac{Xt}{p^{1+2k}\ell}\right) \\ \ll_{\varepsilon,E} X^{\min(-1+\frac 1{4K},-1+\sigma)+\varepsilon}.
\end{multline*}
\end{lemma}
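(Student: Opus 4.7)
The plan is to improve Lemma~\ref{lemma:0.6} by combining, for each $k \geq K$, the ECRH-based prime sum bound of Lemma~\ref{lemma Riemann bound} (applied to the $p$-sum for fixed $t$) with a P\'olya--Vinogradov estimate on the $t$-sum (for fixed $p$), splitting the $t$-range at the threshold $T_1 := X^{1/(2k)}$. This cutoff is exactly the point where the two resulting bounds coincide and it produces the exponent $-1 + 1/(4k)$ appearing in the statement.

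For the small-$|t|$ contribution I would fix $k \geq K$, $\ell \mid N_E$, and $1 \leq |t| \leq T_1$, and estimate $T_{k,\ell,t} := \sum_{p > P_0} \frac{\overline{\epsilon_p}(-\ell t/p)\lambda_E(p)\log p}{p^{1+2k}} \widehat\phi(\log p/L)\widehat w(Xt/(p^{1+2k}\ell))$ with $P_0 = X^{(1-\varepsilon)/(2k+1)}$. Because $\overline{\epsilon_p}$ depends only on $p \bmod 4$, splitting into residues mod~$4$ permits Lemma~\ref{lemma Riemann bound} to be applied to the twist $f_E \otimes \chi_{-\ell t}$, giving a partial-sum bound of order $y^{1/2}(\log(y|\ell t|N_E))^2$. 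Partial summation against the smooth weight $g(y) = y^{-1-2k}\widehat\phi(\log y/L)\widehat w(Xt/(y^{1+2k}\ell))$, combined with the Schwartz decay of $\widehat w$ and the substitution $u = Xt/(y^{1+2k}\ell)$, should produce $|T_{k,\ell,t}| \ll X^\varepsilon(X|t|/\ell)^{-(4k+1)/(4k+2)}$. Summing this estimate over $1 \leq |t| \leq T_1$ then yields $\ll X^\varepsilon X^{-(4k+1)/(4k+2)} T_1^{1/(4k+2)} = X^{-1+1/(4k)+\varepsilon}$.

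Next I would treat the range $|t| > T_1$ by keeping the $t$-sum inside and applying P\'olya--Vinogradov together with partial summation, giving the tail bound $\ll p^{1/2}\log p \cdot (1 + T_1 X/(p^{1+2k}\ell))^{-N}$ for any $N$. This Schwartz factor restricts the effective $p$-support to $p \geq Y_{T_1} := X^{1/(2k)}\ell^{-1/(1+2k)}$, whence $\sum_{p > Y_{T_1}}(\log p)^2/p^{1/2+2k} \ll Y_{T_1}^{1/2-2k}(\log X)^2 \ll X^{-1+1/(4k)+\varepsilon}$, matching the small-$t$ bound. The two halves combine cleanly precisely when $T_1 \leq T_{\max} := X^{2\sigma(2k+1)-1}\ell$, i.e.\ when $\sigma \geq 1/(4k)$; in the opposite regime the large-$t$ range is vacuous and the small-$t$ estimate truncates at $T_{\max}$, collapsing to $\ll X^{\sigma-1+\varepsilon}$. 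Finally, summing the resulting per-$(k,\ell)$ bound $\ll X^{\min(-1+1/(4k),\sigma-1)+\varepsilon}$ over $\ell \mid N_E$ (contributing only $N_E^\varepsilon$) and over $K \leq k \leq 10\log X$ (dominated by the term $k = K$, since $1/(4k)$ is decreasing in $k$) yields the claimed bound.

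The step I expect to be the main obstacle is the small-$t$ partial summation: one must carefully control the derivative $g'(y)$, whose contribution involves $\widehat w'$ evaluated at the $y$-dependent argument $u(y) = Xt/(y^{1+2k}\ell)$, and verify through the change of variables $u \leftrightarrow y$ that the resulting integrand has the integrable form $u^{-1/(4k+2)} H(u)$ with $H$ Schwartz. Any slack at this step would prevent the small-$t$ and large-$t$ estimates from meeting at the optimal threshold $T_1 = X^{1/(2k)}$ and would degrade the final exponent below $-1 + 1/(4K)$.
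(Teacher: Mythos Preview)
Your proposal is correct and reaches the same exponent, but the decomposition is organized differently from the paper's. The paper splits the \emph{prime} range at $p=X^{1/(2k)}$: for $p\le m(X):=\min\bigl(X^{1/(2k)},(c_EX^2)^\sigma\bigr)$ it performs the same ECRH-based integration by parts in $p$, but instead of extracting a pointwise bound in $t$ and summing, it interchanges $\sum_{t\ne 0}$ with the resulting $x$-integral and uses the elementary estimates $\sum_{t\ne 0}|\widehat w(Xt/(x^{1+2k}\ell))|(\log 2|t|)^2\ll x^{1+2k}\ell/X\cdot(\log(\dots))^2$ and the analogous bound with $\widehat w'$, which collapses the integrand to $x^{-1/2}\ell/X$ and yields $m(X)^{1/2}/X$. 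For $p>X^{1/(2k)}$ (when this range is nonempty) it applies P\'olya--Vinogradov to the \emph{full} $t$-sum, exactly as in Lemma~\ref{lemma:0.6}. Your approach instead splits on $|t|$ at $T_1=X^{1/(2k)}$, obtains the per-$t$ bound $(X|t|/\ell)^{-(4k+1)/(4k+2)}$ via the change of variables $u=Xt/(y^{1+2k}\ell)$, and then handles $|t|>T_1$ by a PV tail estimate that effectively forces $p\gtrsim X^{1/(2k)}$. Both routes apply ECRH and PV on essentially the same $(p,t)$ regions; the paper's version avoids the $u$-substitution and the integrability check you flag, while your version yields a cleaner $t$-dependent bound. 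One small point worth making explicit in your write-up: the boundary term at $P_0$ in your partial summation is negligible because $\widehat w(X^{\varepsilon}t/\ell)$ decays rapidly for $|t|\ge 1$, which is implicit in your argument but should be stated.
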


\begin{proof}
We will show that for $0\leq k \leq 10 \log X $, we have
\begin{equation*}
R:= \sum_{\ell \mid N_E} \frac{\mu(\ell)}{\ell} \sum_{t \in \mathbb Z} \sum_{\substack{p > X^{ \frac{1-\varepsilon}{2k+1}} }} \frac{ \left( \frac{-t \ell}p\right)\overline{\epsilon_p} \lambda_E(p) \log p}{p^{1+2k}}\widehat \phi\left( \frac{\log p}{L} \right)   \widehat w\left( \frac{Xt}{p^{1+2k}\ell}\right) \ll_{\varepsilon,E} X^{\min(-1+\frac 1{4k},-1+\sigma)+\varepsilon},
\end{equation*}
from which the lemma clearly follows. Notice that we have added back the primes dividing $2N_E$, since by a calculation similar to \eqref{equation:boundsumfouriertrans}, their contribution is
$$ \ll \sum_{\ell \mid N_E} \frac{1}{\ell}\sum_{t \in \mathbb Z} \sum_{\substack{p \mid 2N_E }} \frac{\log p}{p^{1+2k}} \left| \widehat w\left( \frac{Xt}{p^{1+2k}\ell}\right) \right| \ll \sum_{\ell \mid N_E} \frac{1}{\ell} \sum_{\substack{p \mid 2N_E }} \frac{\log p}{p^{1+2k}} \frac{p^{1+2k} \ell}X \ll_E X^{-1}.   $$
We now apply Lemma \ref{lemma Riemann bound}. Note that $\epsilon_p=\tfrac{1+i}2 \chi_0(p)+\tfrac{1-i}2 \chi_1(p)$, where $\chi_0$ and $\chi_1$ are respectively the trivial and the nontrivial character modulo $4$. Using this fact and applying Lemma \ref{lemma Riemann bound}, we have
$$ \mathcal S_{t,\ell}(y):= \sum_{p\leq y} \left( \frac {-t\ell} p\right) \overline{\epsilon_p} \lambda_E(p) \log p \ll y^{\frac 12} (\log y)\log(2|t|\ell yN_E). $$ 
Let $m(X):=\min(X^{\frac 1{2k}},(c_EX^2)^{\sigma})$, with $c_E:=N_E/(2\pi e)^2$. We first treat the terms in $R$ for which $X^{\frac{1-\varepsilon}{2k+1}}<p \leq m(X)$. Denoting the sum of these terms by $R_1$, we have 
$$R_1=  \sum_{\ell \mid N_E} \frac{\mu(\ell)}{\ell} \sum_{0\neq t \in \mathbb Z} P_{t,\ell},$$
where
$$ P_{t,\ell}:= \int_{X^{\frac{1-\varepsilon}{2k+1}}}^{m(X)} \frac{ \widehat \phi\left( \frac{\log x}L\right) \widehat w\left( \frac{Xt}{{x^{1+2k}}\ell}\right)}{x^{1+2k}} d\mathcal S_{t,\ell}(x). $$
Performing integration by parts, we obtain the bound
\begin{multline}
 P_{t,\ell} \ll \frac{ \big| \widehat w\left( \frac{X^{\varepsilon}t}{\ell} \right)\big| }{X^{1-\varepsilon}}\left|\mathcal S_{t,\ell}\left(X^{\frac{1-\varepsilon}{2k+1}}\right)\right| + \frac{ \big| \widehat w\big( \frac{Xt}{ m(X)^{1+2k}\ell} \big)\big| }{m(X)^{ 1+2k}}\left|\mathcal S_{t,\ell}\left(m(X)\right)\right|
 \\+  (k+1)\int_{X^{\frac{1-\varepsilon}{2k+1}}}^{m(X)}   \left[   \frac{ \big| \widehat w\left( \frac{Xt}{x^{1+2k}\ell} \right)\big| }{x^{2+2k}} + \frac{ \big|\widehat w'\left( \frac{Xt}{{x^{1+2k}}\ell} \right)\big| X|t|}{x^{3+4k}\ell} \right]x^{\frac 12} \big(\log (2x|t|\ell N_E)\big)^2dx.
\end{multline}
For any fixed $M\geq 1$ and $x\in [X^{\frac{1-\varepsilon}{2k+1}},m(X)]$, we have
$$ \sum_{0\neq t \in \mathbb Z}\left| \widehat w\left( \frac{X^{\varepsilon}t}{\ell} \right)\right| \log (2|t|) \ll_M \ell^M X^{-\varepsilon M} ; \hspace{.0cm}\sum_{0\neq t \in \mathbb Z}\left| \widehat w\left( \frac{Xt}{x^{1+2k}\ell} \right)\right| \big(\log (2|t|)\big)^2 \ll \frac{x^{1+2k} \ell}{X} \big(\log(2x^{1+2k}X\ell)\big)^2; $$
$$     \sum_{0\neq t \in \mathbb Z} X|t|\left|\widehat w'\left( \frac{Xt}{x^{1+2k}\ell} \right)\right| \big(\log (2|t|)\big)^2 \ll \frac{(x^{1+2k}\ell)^2}X\big(\log(2x^{1+2k}X\ell)\big)^2, $$
from which we obtain
\begin{align*}
R_1&\ll_{\varepsilon,E} X^{-1}+X^{\min(-1+\frac 1{4k},-1+\sigma)+\varepsilon}+ (k+1) \sum_{\ell \mid N_E} \frac{1}{\ell} \int_{X^{\frac{1-\varepsilon}{2k+1}}}^{m(X)}   \left[   \frac{ \ell }{x X} + \frac{ \ell }{x X} \right]x^{\frac 12} \big( \log(2x^{1+2k}X\ell)\log (2x\ell)\big)^2dx \\ &\ll_{\varepsilon,E} X^{\min(-1+\frac 1{4k},-1+\sigma)+\varepsilon},
\end{align*} 
since $k\leq 10 \log X$.

For those $k$ for which $2\sigma < \tfrac 1{2k}$, we have already covered all possible values of $p$ (for $X \gg_E 1$). For the remaining values of $k$, we apply the P\'olya-Vinogradov inequality in the exact same manner as in the proof of Lemma \ref{lemma:0.6}. Thus the sum of the terms with $p>X^{\frac 1{2k}}$ is 
$$  \ll_{\varepsilon} X^{-1+\frac 1{4k}+\varepsilon}. $$
The proof is complete since in this case we have that $\min(-1+\tfrac 1{4k},-1+\sigma)=-1+\tfrac 1{4k}$.
\end{proof}

\begin{proposition}
Assume ECRH, fix $m\in \mathbb N$ and assume that $\frac 1{2(2m+1)}\leq \sigma =$sup$(\text{supp}\widehat \phi)< \frac 1{2(2m-1)}$. Then, for any fixed $\varepsilon>0$, we have the bound
$$ S_{\text{odd}} \ll_{\varepsilon,E}  X^{\min(-1+\frac 1{4m},-1+\sigma)+\varepsilon}.$$
Moreover, for $\tfrac 12 \leq \sigma < 1$, we have 
$ S_{\text{odd}} \ll_{\varepsilon,E}  X^{-1+\sigma+\varepsilon}.$
\label{proposition:conditionalbound}
\end{proposition}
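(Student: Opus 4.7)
The plan is to mirror the proof of Proposition \ref{proposition:unconditionalbound} almost verbatim, but replacing the use of Lemma \ref{lemma:0.6} (which relies on P\'olya--Vinogradov) by the sharper ECRH-conditional bound of Lemma \ref{lemma:ECRHsmallsupport}. First, I would invoke Lemma \ref{lemma:initialPoisson} to express $S_{\text{odd}}$ as a multiple sum over $0\le k\le 10\log X$, $\ell\mid N_E$, primes $p$, and integers $t$, plus an acceptable error, and then use Lemma \ref{lemma:smallp} to cut out the contribution of $p\le X^{(1-\varepsilon)/(2k+1)}$ at cost $O_{\varepsilon,E}(X^{-1+\varepsilon})$. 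The resulting sum is precisely $S_{2,K}$ of Lemma \ref{lemma:ECRHsmallsupport} for an appropriate $K$, up to the prefactor $X/(LW(X))\ll_E 1/L$ provided by Remark \ref{whatiswx}.

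For the first bound (where $\frac{1}{2(2m+1)}\le \sigma<\frac{1}{2(2m-1)}$ with $m\ge 1$), I would use the support condition on $\widehat\phi$ to eliminate all small values of $k$. Since $\widehat\phi(\log p/L)=0$ unless $\log p\le \sigma L$, and $L\sim 2\log X$, we get $p\le X^{2\sigma+o(1)}$, which under our assumption on $\sigma$ forces $p\le X^{1/(2m-1)-\eta}$ for some fixed $\eta>0$ (depending on the gap $\frac{1}{2(2m-1)}-\sigma$). Combined with the remaining range $p>X^{(1-\varepsilon)/(2k+1)}$, this is inconsistent whenever $k\le m-1$ provided $\varepsilon$ is taken sufficiently small in terms of $\eta$. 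Thus only $k\ge m$ survives, and applying Lemma \ref{lemma:ECRHsmallsupport} with $K=m$ yields the bound $S_{\text{odd}}\ll_{\varepsilon,E} X^{\min(-1+\frac{1}{4m},-1+\sigma)+\varepsilon}$.

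For the second bound (where $\frac{1}{2}\le \sigma<1$), the support of $\widehat\phi$ no longer forces $k$ to be bounded away from $0$, so no truncation of the $k$-sum is available. I would simply apply Lemma \ref{lemma:ECRHsmallsupport} with $K=0$, invoking the convention from the footnote that $\min(-1+\tfrac{1}{4\cdot 0},-1+\sigma)=-1+\sigma$, to conclude $S_{\text{odd}}\ll_{\varepsilon,E} X^{-1+\sigma+\varepsilon}$.

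The substance of the argument has already been absorbed into Lemmas \ref{lemma:initialPoisson}, \ref{lemma:smallp}, and \ref{lemma:ECRHsmallsupport}, so the only delicate point here is the bookkeeping in the first case: one must verify that the $\varepsilon$ appearing in the support truncation can be chosen uniformly smaller than the gap $\eta$ inherited from $\sigma<\frac{1}{2(2m-1)}$ in order to make the exclusion $k\le m-1$ rigorous. This is routine and does not affect the final exponent, since $\varepsilon$ is arbitrary in the conclusion.
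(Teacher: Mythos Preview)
Your proposal is correct and follows essentially the same approach as the paper: the paper's proof consists of the single sentence that one mimics Proposition \ref{proposition:unconditionalbound} but replaces Lemma \ref{lemma:0.6} by Lemma \ref{lemma:ECRHsmallsupport}, and you have accurately unpacked exactly that argument, including the elimination of $k\le m-1$ via the support condition in the first case and the application with $K=0$ in the second.
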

\begin{proof}
The proof is similar to that of Proposition \ref{proposition:unconditionalbound}, except that we substitute Lemma \ref{lemma:0.6} with Lemma \ref{lemma:ECRHsmallsupport}.
\end{proof}

We summarize the findings of this section in the following theorem.

\begin{theorem}
\label{theorem Sodd bound}
Fix $\varepsilon>0$. Then, in the range $\sigma=$sup$(\text{supp}\widehat \phi)<\frac 12$, we have the following unconditional bound:
$$ S_{\text{odd}} \ll_{\varepsilon,E} X^{\eta(\sigma)+\varepsilon}, $$
where $\eta(\sigma) =-\max(\frac {4m-1}{4m+1},1-2\sigma)$ for $\frac 1{2(2m+1)} \leq \sigma < \frac 1{2(2m-1)},$ with $m \in \mathbb N$.\footnote{Note that the domain of this function is $(0,\tfrac 12)$.}
Moreover, if we assume ECRH, then, in the wider range $\sigma=$sup$(\text{supp}\widehat \phi)<1$, we have the improved bound
$$ S_{\text{odd}} \ll_{\varepsilon,E} X^{\theta(\sigma)+\varepsilon}, $$
where
$$\theta(\sigma) = \begin{cases}
-\max(1-\frac {1}{4m},1-\sigma) &\text{ for } \frac 1{2(2m+1)} \leq \sigma < \frac 1{2(2m-1)}  \text{ with } m \in \mathbb N, \\
-1+\sigma &\text{ for } \frac 12\leq \sigma < 1.
\end{cases}$$
\end{theorem}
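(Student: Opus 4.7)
The statement of Theorem \ref{theorem Sodd bound} is essentially a bookkeeping consolidation of the two preceding propositions, so my plan is simply to verify that the domains of $\sigma$ covered by Propositions \ref{proposition:unconditionalbound} and \ref{proposition:conditionalbound} together tile the claimed ranges, and that the bounds match the stated functions $\eta(\sigma)$ and $\theta(\sigma)$.

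For the unconditional statement, I would first observe that the half-open intervals $\bigl[\tfrac{1}{2(2m+1)},\tfrac{1}{2(2m-1)}\bigr)$ for $m=1,2,3,\ldots$ form a partition of the interval $(0,\tfrac 12)$. Hence, for any $\sigma \in (0,\tfrac 12)$ there is a unique $m\in \mathbb N$ with $\tfrac{1}{2(2m+1)} \leq \sigma < \tfrac{1}{2(2m-1)}$, and Proposition \ref{proposition:unconditionalbound} supplies the bound
\[
S_{\text{odd}} \ll_{\varepsilon,E} X^{-\max(\frac{4m-1}{4m+1},\,1-2\sigma)+\varepsilon},
\]
which is exactly $X^{\eta(\sigma)+\varepsilon}$ by the definition of $\eta$.

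For the conditional statement, I would split into two cases according to the stated definition of $\theta$. For $\sigma \in (0,\tfrac 12)$, the same partition argument as above applies and Proposition \ref{proposition:conditionalbound} yields
\[
S_{\text{odd}} \ll_{\varepsilon,E} X^{\min(-1+\frac 1{4m},\,-1+\sigma)+\varepsilon} = X^{-\max(1-\frac 1{4m},\,1-\sigma)+\varepsilon},
\]
matching the first line in the definition of $\theta(\sigma)$. For $\sigma \in [\tfrac 12, 1)$, the second assertion of Proposition \ref{proposition:conditionalbound} gives directly $S_{\text{odd}} \ll_{\varepsilon,E} X^{-1+\sigma+\varepsilon}$, matching the second line in the definition of $\theta(\sigma)$.

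There is no real obstacle here: the propositions have already done the analytic work (Poisson summation, P\'olya--Vinogradov, and, conditionally, the ECRH-based estimate of Lemma \ref{lemma Riemann bound}), and the theorem's proof reduces to checking the case decomposition and rewriting the $\min$/$\max$ expressions consistently. The only point worth double-checking is the footnote's domain convention (that $\eta$ is defined on $(0,\tfrac 12)$) and the boundary case $\sigma = \tfrac 12$ in the conditional statement, which is covered by the second clause of $\theta$.
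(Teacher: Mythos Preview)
Your proposal is correct and follows exactly the paper's own approach: the proof in the paper consists of a single sentence stating that the unconditional bound follows directly from Proposition \ref{proposition:unconditionalbound} and the conditional bound from Proposition \ref{proposition:conditionalbound}. Your additional verification that the intervals $\bigl[\tfrac{1}{2(2m+1)},\tfrac{1}{2(2m-1)}\bigr)$ partition $(0,\tfrac12)$ and that the $\min$/$\max$ expressions match is a welcome elaboration, but the argument is identical in substance.
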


\begin{proof}
The unconditional bound follows directly from Proposition \ref{proposition:unconditionalbound}, and the conditional bound follows from Proposition \ref{proposition:conditionalbound}.
\end{proof}

We are now ready to complete the proof of our main result.

\begin{proof}[Proof of Theorems \ref{main theorem} and \ref{second main theorem}]
By Corollary \ref{corollary 1 level densities} and \eqref{equation seperation Sodd Seven}, we have
$$ \mathcal D(\phi;X) = \frac {\widehat \phi(0)}{LW(X)}  \sumt \log \bigg(\frac{N_{E}d^2}{(2\pi)^2}\bigg)  -\frac{2} L\int_0^{\infty} \left( \frac{\widehat \phi(x/L) e^{-x} }{1-e^{-x}} - \widehat \phi(0) \frac{e^{-x}}x \right)dx \\+ S_{\text{odd}} +S_{\text{even}}.  $$
Moreover, Theorem \ref{theorem Sodd bound} shows that we have $S_{\text{odd}} \ll_{\varepsilon} X^{\eta(\sigma)+\varepsilon}$ unconditionally, and that under ECRH we have $S_{\text{odd}} \ll_{\varepsilon} X^{\theta(\sigma)+\varepsilon}$. We conclude the proof by applying Lemma \ref{lemmaweighted} and Remark \ref{whatiswx} to $S_{\text{even}}$, yielding the estimate
$$ S_{\text{even}}=-\frac {2}{L}\sum_{\ell=1}^\infty \sum_{p} \frac{\left(\alpha_E(p)^{2\ell}+\beta_E(p)^{2\ell}\right) \log p}{p^{\ell}} \widehat \phi\left( \frac{2\ell\log p}{L} \right) \left( 1+\frac{\psi_{N_E}(p)}{p}\right)^{-1}+O_{\varepsilon}\left(X^{-1+\varepsilon}N_E^{\varepsilon}\right).$$
\end{proof}

\section{The prime sum in $\mathcal D^*(\phi;X)$}
\label{section square free}

In this section, we study the prime sum appearing in \eqref{equation one level density explicit formula squarefree}, that is
$$ -\frac 2{L  W^*(X)  }\sum_{p,m} \frac{(\alpha_E(p)^m+\beta_E(p)^m) \log p}{p^{m/2}} \widehat \phi\left( \frac{m \log p}{L} \right) \sumn \left( \frac{d}{p^m}\right) = S^*_{\text{odd}}+S^*_{\text{even}}, $$
where again $S^*_{\text{odd}}$ and $S^*_{\text{even}}$ denote respectively the sum of the terms with $m$ odd and even. Throughout, we do not indicate the dependence on $\phi$ and $w$ of the implied constants in the error terms.

We first give an estimate for $S^*_{\text{odd}}$, showing that the terms with $m\geq 3$ are negligible.

\begin{lemma}
\label{lemma:firststepsf}
Fix $\varepsilon>0$, and assume RH. Denoting by $[a,b]$ the least common multiple of $a$ and $b$, we have
\begin{multline*}
S^*_{\text{odd}} = -\frac 2{L  W^*(X)  }\sum_{\substack{ \ell \mid N_E \\ s \in \mathbb N}}  \mu(\ell)\mu(s)\sum_{p\nmid 2sN_E } \frac{\lambda_E(p) \log p}{p^{1/2}} \widehat \phi\left( \frac{ \log p}{L} \right) \sum_{0\neq d \in \mathbb Z} w\left( \frac{d[\ell,s^2]}{X}\right)\left( \frac{d[\ell,s^2]}{p}\right)\\ +O_{\varepsilon}\big(N_E^{\varepsilon}X^{-\frac 34+\varepsilon}\big). 
\end{multline*}
\end{lemma}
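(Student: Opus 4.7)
The plan is to isolate the $m=1$ contribution to $S^*_{\text{odd}}$ and then rewrite the resulting inner character sum via M\"obius inversion, detecting the square-freeness and coprimality conditions simultaneously.

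\textbf{Handling odd $m \geq 3$.} For odd $m$ and any prime $p$, $\left(\tfrac{d}{p^m}\right) = \left(\tfrac{d}{p}\right)$. Since $p$ is not a square, Lemma~\ref{lemma count of squarefree} yields
$$\sumn \left(\frac{d}{p}\right) \ll_\varepsilon N_E^\varepsilon\, p^{3/8 + \varepsilon}\, X^{1/4 + \varepsilon},$$
while the same lemma with $n=1$ gives $W^*(X) \asymp X$. Using $|\alpha_E(p)|, |\beta_E(p)| \leq 1$ together with the truncation $p \leq X^{2\sigma/m}$ imposed by $\mathrm{supp}\,\widehat\phi$, the odd $m \geq 3$ contribution to $S^*_{\text{odd}}$ is bounded by
$$ \ll_\varepsilon \frac{N_E^\varepsilon X^{1/4 + \varepsilon}}{X L} \sum_{m \geq 3 \text{ odd}} \sum_{p} \frac{\log p}{p^{m/2 - 3/8 - \varepsilon}} \ll_\varepsilon N_E^\varepsilon X^{-3/4 + \varepsilon},$$
since the inner sum converges absolutely already at $m = 3$. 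The same bound disposes of the finitely many $m = 1$ terms with $p \mid 2N_E$.

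\textbf{M\"obius inversion on the $m = 1$, $p \nmid 2N_E$ terms.} For the surviving terms, use the identities
$$\mu^2(d) = \sum_{s^2 \mid d} \mu(s), \qquad \mathbb{1}_{(d, N_E)=1} = \sum_{\ell \mid (d, N_E)} \mu(\ell).$$
Interchanging the order of summation is justified because each fixed $d \neq 0$ receives contributions from only finitely many pairs $(\ell, s)$ (namely $\ell \mid \gcd(d,N_E)$ and $s \leq \sqrt{|d|}$), and for each fixed $(\ell, s)$ the resulting $d$-sum is absolutely convergent by the Schwartz decay of $w$. The conditions $\ell \mid d$ and $s^2 \mid d$ combine to $[\ell, s^2] \mid d$; writing $d = [\ell, s^2] d'$ with $d' \in \mathbb{Z} \setminus \{0\}$ produces
$$\sumn \left(\frac{d}{p}\right) = \sum_{\substack{\ell \mid N_E \\ s \geq 1}} \mu(\ell)\mu(s) \sum_{0 \neq d' \in \mathbb{Z}} w\!\left( \frac{d'[\ell, s^2]}{X}\right) \left( \frac{d'[\ell, s^2]}{p}\right).$$

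\textbf{Eliminating $p \mid s$.} Since $p \nmid 2N_E$, $p$ is odd, and the Kronecker symbol factors as $\left(\tfrac{d'[\ell, s^2]}{p}\right) = \left(\tfrac{d'}{p}\right)\left(\tfrac{[\ell, s^2]}{p}\right)$. If $p \mid s$ then $p \mid [\ell, s^2]$, forcing $\left(\tfrac{[\ell, s^2]}{p}\right) = 0$; thus the $s$-sum may be further restricted to $p \nmid s$. Assembling the three steps yields the stated identity, with the full error absorbed into $O_\varepsilon(N_E^\varepsilon X^{-3/4 + \varepsilon})$. The only non-cosmetic difficulty is justifying the interchange of summations in the M\"obius step, which as noted is routine once the rapid decay of $w$ is invoked.
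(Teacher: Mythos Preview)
Your proof is correct and follows essentially the same route as the paper's: both use Lemma~\ref{lemma count of squarefree} (under RH) to dispose of the odd $m\ge 3$ terms and the $m=1$, $p\mid 2N_E$ terms with the error $O_\varepsilon(N_E^\varepsilon X^{-3/4+\varepsilon})$, and then unfold the surviving character sum by M\"obius inversion. The paper's proof is terse, mentioning only the identity $\mu^2(d)=\sum_{s^2\mid d}\mu(s)$, whereas you make explicit the second M\"obius inversion $\mathbb{1}_{(d,N_E)=1}=\sum_{\ell\mid(d,N_E)}\mu(\ell)$ (which is implicit in the paper and mirrors what is done in the proof of Lemma~\ref{lemma:initialPoisson}) as well as the observation that the terms with $p\mid s$ vanish because $\left(\tfrac{[\ell,s^2]}{p}\right)=0$; these are precisely the details needed to arrive at the restriction $p\nmid 2sN_E$ in the final formula.
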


\begin{proof}
We first see that $\left( \frac d{p^m}\right)=\left( \frac d{p}\right)$, and so Lemma \ref{lemma count of squarefree} implies that 
$$ \frac {-2}{L  W^*(X)  } \sum_{\substack{p \\ m\geq 3 \text{ odd}}} \frac{(\alpha_E(p)^m+\beta_E(p)^m) \log p}{p^{m/2}} \widehat \phi\left( \frac{m \log p}{L} \right) \sumn \left( \frac{d}{p^m}\right)\ll_{\varepsilon} N_E^{\varepsilon}X^{-\frac 34+\varepsilon}. $$
The same lemma also implies the bound
$$-\frac 2{L  W^*(X)  }\sum_{p \mid 2N_E} \frac{\lambda_E(p) \log p}{p^{1/2}} \widehat \phi\left( \frac{ \log p}{L} \right) \sumn \left( \frac{d}{p}\right)\ll_{\varepsilon} N_E^{\varepsilon}X^{-\frac 34+\varepsilon}.$$
The claimed formula follows from using the identity $\mu^2(d)=\sum_{s^2 \mid d} \mu(s)$ and interchanging the order of summation.
\end{proof}

We now follow the arguments of \cite{KS1}.

\begin{lemma}
Fix $\varepsilon>0$. Assume RH and ECRH, and suppose that $\sigma:= \text{sup}(\text{supp} \widehat \phi)< \infty $. Then, for any $S\geq 1$, we have
\begin{multline*}
S^*_{\text{odd}}  =   \frac {-2X}{LW^*(X)}  \sum_{\substack{\ell \mid N_E \\ s \leq S}} \frac{\mu(\ell)\mu(s)}{[\ell, s^2]} \sum_{\substack{p \nmid 2 sN_E }} \frac{ \left( \frac{-[\ell,s^2] }p\right)\overline{\epsilon_p} \lambda_E(p) \log p}{p}\widehat \phi\left( \frac{\log p}{L} \right)   \sum_{t \in \mathbb Z}\left( \frac t{p} \right) \widehat w\left( \frac{Xt}{p[\ell, s^2]}\right)\\+O_{\varepsilon}\big(N_E^{\varepsilon} X^{\varepsilon}(\log S)^3S^{-1}+ N_E^{\varepsilon} X^{-\frac 34+\varepsilon}\big).
\end{multline*}
\label{lemma:secondPoisson}
\end{lemma}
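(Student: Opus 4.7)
The plan is to adapt the Gauss-sum and Poisson-summation argument of Lemma \ref{lemma:initialPoisson} to the inner $d$-sum in the formula for $S^{*}_{\text{odd}}$ provided by Lemma \ref{lemma:firststepsf}, and then truncate the outer $s$-sum at $S$, using ECRH through Lemma \ref{lemma Riemann bound} to handle the tail.

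Writing $M := [\ell,s^{2}]$, I first observe that $(p,M)=1$ throughout, since the outer summation imposes $p\nmid 2sN_{E}$ and $\ell\mid N_{E}$. Multiplicativity of the Kronecker symbol then gives $(dM/p) = (M/p)(d/p)$, so after pulling $(M/p)$ out of the $d$-sum we are left with $\sum_{d\in\mathbb Z} w(dM/X)(d/p)$, where the $d=0$ term contributes $0$. I would expand the quadratic character via Gauss sums, $(d/p) = \overline{\epsilon_{p}}\,p^{-1/2}\sum_{b\bmod p} (b/p)\,e(db/p)$, and apply Poisson summation to the smooth sum, which yields
\[
\sum_{d\in\mathbb Z}w(dM/X)\,e(db/p) \;=\; \frac{X}{M}\sum_{k\in\mathbb Z}\widehat{w}\!\left(\frac{X(kp-b)}{pM}\right).
\]
Since $(p,1)=1$, the map $(b,k)\mapsto t=kp-b$ is a bijection $\mathbb Z/p\mathbb Z\times\mathbb Z\to\mathbb Z$; under this change of variables $b\equiv -t\pmod p$, hence $(b/p)=(-t/p)$. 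Combining with the factor $(M/p)$ pulled out earlier and using $(M/p)(-t/p) = (-M/p)(t/p)$ for odd $p$, the inner sum becomes $\dfrac{\overline{\epsilon_{p}} X}{\sqrt{p}\,M}\bigl(\tfrac{-M}{p}\bigr)\sum_{t\in\mathbb Z}(t/p)\,\widehat{w}(Xt/(pM))$. Substituting this into the expression from Lemma \ref{lemma:firststepsf} and collapsing $p^{-1/2}\cdot p^{-1/2}=p^{-1}$ produces the main term exactly as stated.

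It remains to truncate the $s$-sum at $s\leq S$. The contribution of the range $s>X^{1/2+\varepsilon}$ is negligible by the rapid decay of $w$, since then $w(dM/X)$ forces $d=0$. On the effective range $S<s\leq X^{1/2+\varepsilon}$, for each fixed nonzero $t$ I would bound the prime sum
\[
\sum_{p\nmid 2sN_{E}}\frac{(-tM/p)\,\overline{\epsilon_{p}}\,\lambda_{E}(p)\,\log p}{p}\,\widehat\phi\!\left(\frac{\log p}{L}\right)\widehat{w}\!\left(\frac{Xt}{pM}\right)
\]
using Lemma \ref{lemma Riemann bound} with $m=-tM$, writing $\overline{\epsilon_{p}}$ as a linear combination of characters modulo $4$ so that the lemma applies directly. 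Partial summation against the two smooth weights $\widehat\phi$ and $\widehat w$, combined with the estimate $\sum_{s>S}[\ell,s^{2}]^{-1}\ll S^{-1}$ and the fact that the $\widehat w$-localization restricts $t$ to a dyadic range of size $\ll pM/X$, produces the claimed tail bound $O_{\varepsilon}(N_{E}^{\varepsilon}X^{\varepsilon}(\log S)^{3}/S)$. The $O(N_{E}^{\varepsilon}X^{-3/4+\varepsilon})$ error is inherited from Lemma \ref{lemma:firststepsf}.

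The main obstacle will be the tail estimate. While the structural idea is clean — ECRH supplies square-root cancellation in $p$, Poisson localizes the $t$-sum, and $[\ell,s^{2}]^{-1}$ decays like $s^{-2}$ — the bookkeeping needed to extract a clean $S^{-1}$ with only three logarithmic losses is delicate. The three logs arise naturally from (i) the conductor factor $\log(2|t|MN_{E})$ in $S_{m}(y)$, (ii) the $(\log y)$ in Lemma \ref{lemma Riemann bound} applied at $y\asymp X^{2\sigma}$, and (iii) the partial-summation loss against the smooth cutoffs; balancing the regimes $|t|\ll pM/X$ and $|t|\gg pM/X$ without incurring additional log factors is the technical heart of the argument.
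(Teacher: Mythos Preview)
Your Poisson-summation manipulation is correct and produces the stated main term. The gap is in the tail estimate for $S<s\leq X^{1/2+\varepsilon}$, where you try to bound the post-Poisson expression.

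After Poisson, the dual $t$-sum has effective length $\asymp p[\ell,s^{2}]/X$ (as you yourself note), and this factor of $[\ell,s^{2}]$ cancels the $1/[\ell,s^{2}]$ in the outer sum. Concretely, the ECRH-plus-partial-summation argument you outline is exactly the content of Lemma~\ref{lemma:small s}, whose proof shows that for each fixed $(\ell,s)$,
\[
\frac{1}{[\ell,s^{2}]}\sum_{t\neq 0}\bigl|P'_{t,\ell,s}\bigr|\;\ll_{\varepsilon,E}\; X^{\sigma-1+\varepsilon},
\]
a bound \emph{independent of $s$}. Summing over $S<s\leq X^{1/2+\varepsilon}$ therefore yields only $O_{\varepsilon,E}(X^{\sigma-1/2+\varepsilon})$, not $O_{\varepsilon}(X^{\varepsilon}(\log S)^{3}S^{-1})$; for $\sigma>\tfrac12$ this is not even $o(1)$. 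Your appeal to $\sum_{s>S}[\ell,s^{2}]^{-1}\ll S^{-1}$ does not help, because the remaining summand grows like $[\ell,s^{2}]$.

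The paper reverses the order of operations: it truncates \emph{before} Poisson, directly in the expression from Lemma~\ref{lemma:firststepsf}. There the $d$-sum $\sum_{d\neq 0}w(d[\ell,s^{2}]/X)$ has length $\ll X/[\ell,s^{2}]$, so after Lemma~\ref{lemma Riemann bound} and a routine summation by parts dispose of the inner $p$-sum (contributing $\ll(\log(2|d|\ell sN_{E}X))^{3}$ for each fixed $d$), one divides by $W^{*}(X)\asymp X$ and is left with $\sum_{s>S}(\log s)^{3}/s^{2}\ll(\log S)^{3}/S$. Only then is Poisson applied, to the finite range $s\leq S$. Your instinct to handle $s>X^{1/2+\varepsilon}$ pre-Poisson was correct; the same pre-Poisson treatment is needed for the entire tail $s>S$.
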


\begin{proof}
The starting point is Lemma \ref{lemma:firststepsf}, in which we will bound the terms with $s> S$ using ECRH. Applying Lemma \ref{lemma Riemann bound} and a routine summation by parts, we obtain that
\begin{align*}
&\frac 2{L  W^*(X)  }\sum_{\substack{ \ell \mid N_E \\ s >S}}  \mu(\ell)\mu(s)\sum_{0\neq d \in \mathbb Z}w\left( \frac{d[\ell,s^2]}{X}\right)\sum_{p\nmid 2sN_E } \left( \frac{d[\ell,s^2]}{p}\right)\frac{\lambda_E(p) \log p}{p^{1/2}} \widehat \phi\left( \frac{ \log p}{L} \right)  \\
& \ll  \frac 1{W^*(X)} \sum_{\substack{ \ell \mid N_E \\ s >S}} \sum_{0\neq d \in \mathbb Z} w\left( \frac{d[\ell,s^2]}{X}\right) \big(\log (2|d|\ell s N_EX)\big)^3\ll_{\varepsilon} (XN_E)^{\varepsilon}  \sum_{s>S} \frac {(\log s)^3}{s^2} \\& \ll_{\varepsilon} \frac{N_E^{\varepsilon}X^{\varepsilon}(\log S)^3}S.
\end{align*}
The rest of the proof is similar to that of Lemma \ref{lemma:initialPoisson}, the main ingredient being Poisson Summation.
\end{proof}

We now handle the terms with $s\leq S$ in $S_{\text{odd}}^*$.
\begin{lemma}
\label{lemma:small s}
Assume ECRH, fix $\varepsilon>0$ and suppose that $\sigma:= \text{sup}(\text{supp} \widehat \phi)< \infty $. Then, for any $1 \leq S \leq X^{2}$, we have that\footnote{This range can be replaced by $1 \leq S \leq X^{M}$, for any fixed $M\in \mathbb N$. However, the important range for our analysis is $1 \leq S \leq X^2$.}
$$ \sum_{\substack{\ell \mid N_E \\ s \leq S}} \frac{\mu(\ell)\mu(s)}{[\ell, s^2]} \sum_{t \in \mathbb Z} \sum_{\substack{p \nmid 2 sN_E }} \frac{ \left( \frac{-[\ell,s^2]t }p\right)\overline{\epsilon_p} \lambda_E(p) \log p}{p}\widehat \phi\left( \frac{\log p}{L} \right)  \widehat w\left( \frac{Xt}{p[\ell, s^2]}\right) \ll_{\varepsilon,E} SX^{\sigma-1+\varepsilon}.$$
\end{lemma}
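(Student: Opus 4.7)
The plan is to mimic closely the proof of Lemma \ref{lemma:ECRHsmallsupport}, the main differences being the presence of the parameter $s$ and the replacement of the factor $p^{-1-2k}$ by $p^{-1}$. Fix $\ell\mid N_E$, $s\leq S$ and $t\in\mathbb Z$, and write $M:=[\ell,s^2]$. The $t=0$ term vanishes since $\big(\tfrac{0}{p}\big)=0$, so we may restrict to $t\neq 0$. The primes $p\mid 2sN_E$ that have been excluded contribute at most $O(\log(2sN_E))$ to the truncated prime-counting function below, which is absorbed in the final error.

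Decomposing $\overline{\epsilon_p}=\tfrac{1+i}{2}\chi_0(p)+\tfrac{1-i}{2}\chi_1(p)$ with $\chi_0,\chi_1$ the characters mod $4$, and applying Lemma~\ref{lemma Riemann bound} (to $L(s,f_E\otimes\chi_{-Mt})$ twisted by $\chi_0$ and $\chi_1$), I obtain for $t\neq 0$ and $y\geq 1$ the ECRH-based bound
\[
\mathcal S_{t,M}(y):=\sum_{p\leq y}\Bigl(\tfrac{-Mt}{p}\Bigr)\overline{\epsilon_p}\lambda_E(p)\log p\ll y^{1/2}(\log y)\log(2|Mt|yN_E).
\]
This is the analogue of the estimate used in the proof of Lemma~\ref{lemma:ECRHsmallsupport}, and holds for all nonzero integers $-Mt$ (including perfect squares, since ECRH for $L(s,E)$ is assumed).

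I then perform partial summation on the inner prime sum
\[
P_{t,\ell,s}:=\sum_{p\nmid 2sN_E}\frac{\bigl(\tfrac{-Mt}{p}\bigr)\overline{\epsilon_p}\lambda_E(p)\log p}{p}\widehat\phi\!\Bigl(\tfrac{\log p}{L}\Bigr)\widehat w\!\Bigl(\tfrac{Xt}{pM}\Bigr)
\]
against the smooth weight $F(x)=\widehat\phi(\log x/L)\widehat w(Xt/(xM))/x$, whose support (via $\widehat\phi$) is restricted to $x\leq y_{\max}:=(c_EX^2)^{\sigma}\asymp X^{2\sigma}$. The derivative $F'(x)$ has size $\ll x^{-2}+X|t|/(x^3M)$, each piece weighted by the relevant derivative of $\widehat w$ at $Xt/(xM)$. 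The natural threshold is $x_*=X|t|/M$: for $x\leq x_*$ the rapid decay $|\widehat w(Xt/(xM))|\ll_A (xM/(X|t|))^A$ produces arbitrary savings, while for $x_*\leq x\leq y_{\max}$ the $\widehat w$ factor is $O(1)$. Splitting the integral at $x_*$ and inserting $\mathcal S_{t,M}(x)\ll x^{1/2+\varepsilon}(MN_E|t|)^{\varepsilon}$ yields, after a routine estimation fully analogous to the one in the proof of Lemma~\ref{lemma:ECRHsmallsupport},
\[
|P_{t,\ell,s}|\ll_{\varepsilon}\bigl(M/(X|t|)\bigr)^{1/2}(MN_E|t|X)^{\varepsilon}
\]
whenever $|t|\leq y_{\max}M/X\cdot X^{\varepsilon}$, the remaining range of $|t|$ contributing negligibly by the Schwartz decay of $\widehat w$.

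Summing over $1\leq|t|\lesssim y_{\max}M/X$ contributes $y_{\max}^{1/2}M/X\cdot(sN_EX)^{\varepsilon}\ll MX^{\sigma-1+\varepsilon}(sN_E)^{\varepsilon}$ for each $(\ell,s)$. The factor $1/[\ell,s^2]=1/M$ appearing in the lemma then cancels the $M$, and summing over the $O(N_E^{\varepsilon})$ divisors $\ell\mid N_E$ together with $s\leq S\leq X^2$ gives the claimed bound $\ll_{\varepsilon,E}SX^{\sigma-1+\varepsilon}$. The main technical point is simply to split the partial-summation integral at the correct scale $x_*=X|t|/M$ so that the savings from the rapid decay of $\widehat w$ are neither overestimated nor underestimated; everything else is bookkeeping parallel to the already-completed Lemma~\ref{lemma:ECRHsmallsupport}.
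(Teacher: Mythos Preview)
Your proof is correct and follows essentially the same strategy as the paper's: add back the excluded primes, use ECRH (via Lemma~\ref{lemma Riemann bound}) to bound the partial sums $\mathcal S_{t,M}(y)$, perform summation by parts on the prime sum, and then sum over $t$, $\ell$, $s$. The only minor difference is that the paper interchanges the sum over $t$ with the integration-by-parts integral (using the pointwise bound $\sum_{t\neq 0}|\widehat w(Xt/(xM))|(\log 2|t|)^2\ll (xM/X)(\log(2xX\ell s))^2$ for each $x$) rather than splitting the integral at $x_*=X|t|/M$ for each fixed $t$; both orderings lead to the same bound $\ll_{\varepsilon,E} SX^{\sigma-1+\varepsilon}$.
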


\begin{proof}
We first add back the primes dividing $2sN_E$, at the cost of an error term which is 
$$ \ll \sum_{\substack{  \ell \mid N_E \\ s \leq S}} \frac 1{[\ell,s^2]}\sum_{\substack{p \mid 2 sN_E }} \frac{\log p}p \frac{p[\ell,s^2]}X \ll_{\varepsilon} N_E^{\varepsilon}SX^{-1} \log (2N_ES).  $$
We then follow the steps of Lemma \ref{lemma:ECRHsmallsupport}. The sum we are interested in equals
$$\sum_{\substack{\ell \mid N_E \\ s \leq S}} \frac{\mu(\ell)\mu(s)}{[\ell, s^2]} \sum_{0\neq t \in \mathbb Z} P'_{t,\ell,s}+O_{\varepsilon}\big(N_E^{\varepsilon}SX^{-1} \log (2N_ES)\big),$$
where (write $c_E:=N_E/(2\pi e)^2$)
$$ P'_{t,\ell,s}:= \int_{1}^{(c_EX^2)^{\sigma}} \frac{ \widehat \phi\left( \frac{\log x}L\right) \widehat w\left( \frac{Xt}{{x}[\ell,s^2]}\right)}{x} dS_{t,\ell,s}(x), $$
with
$$ S_{t,\ell,s}(y)= \sum_{\substack{p\leq y}} \left( \frac {-[\ell,s^2]t} p\right) \overline{\epsilon_p} \lambda_E(p) \log p \ll y^{\frac 12} (\log (2y))\log(2N_E|t|s\ell y). $$
Performing integration by parts, we obtain the bound
\begin{equation}
 P'_{t,\ell,s} \ll \int_{1}^{(c_EX^2)^{\sigma}}   \left[   \frac{ \Big| \widehat w\left( \frac{Xt}{x[\ell,s^2]} \right)\Big| }{x^{2}} + \frac{ \Big|\widehat w'\left( \frac{Xt}{{x}[\ell,s^2]} \right)\Big| X|t|}{x^{3}[\ell,s^2]} \right]x^{\frac 12} \big(\log (2N_Ex|t|s\ell)\big)^2dx.
\end{equation}
Recall that
$$ \sum_{0\neq t \in \mathbb Z}\left| \widehat w\left( \frac{Xt}{x[\ell,s^2]} \right)\right| \big(\log (2|t|)\big)^2 \ll \frac{x [\ell,s^2]}{X}\big(\log(2xX\ell s)\big)^2;$$ $$ \sum_{0\neq t \in \mathbb Z} X|t|\left|\widehat w'\left( \frac{Xt}{x[\ell,s^2]} \right)\right|\big(\log (2|t|)\big)^2 \ll \frac{(x[\ell,s^2])^2}X\big(\log (2xX\ell s)\big)^2, $$
from which we obtain the bound
\begin{align*}
\sum_{\substack{\ell \mid N_E \\ s \leq S}} \frac{\mu(\ell)\mu(s)}{[\ell, s^2]} \sum_{0\neq t \in \mathbb Z} P'_{t,\ell,s}&\ll  \sum_{\substack{\ell \mid N_E \\ s \leq S}} \frac{1}{[\ell, s^2]} \int_{1}^{(c_EX^2)^{\sigma}}   \left[   \frac{ [\ell,s^2] }{x X} + \frac{ [\ell,s^2] }{x X} \right]x^{\frac 12} \big(\log (2xX\ell s)\log (2N_Exs\ell)\big)^2dx \\&\ll_{\varepsilon,E} S(\log (2S))^4X^{\sigma-1+\varepsilon} .
\end{align*}  
This concludes the proof.
\end{proof}

We summarize the current section in the following theorem.

\begin{theorem}\label{SqfreeSodd}
Assume RH and ECRH, and suppose that $\sigma:= \text{sup}(\text{supp} \widehat \phi)< 1 $. Then, for any fixed $\varepsilon>0$, we have the bound
$$  S^*_{\text{odd}} \ll_{\varepsilon,E}  X^{\frac{\sigma-1}2+\varepsilon}.$$
\end{theorem}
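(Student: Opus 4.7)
The proof plan is to combine Lemma \ref{lemma:secondPoisson} and Lemma \ref{lemma:small s} with an optimal choice of the truncation parameter $S$ to balance the two error contributions. Starting from Lemma \ref{lemma:secondPoisson}, I would write
\begin{equation*}
S^*_{\text{odd}} = \frac{-2X}{LW^*(X)}\,T(S) + O_{\varepsilon}\bigl(N_E^{\varepsilon}X^{\varepsilon}(\log S)^3 S^{-1} + N_E^{\varepsilon}X^{-3/4+\varepsilon}\bigr),
\end{equation*}
where $T(S)$ denotes the triple sum over $\ell\mid N_E$, $s\leq S$, $t\in\mathbb{Z}$ and $p\nmid 2sN_E$ appearing there.

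The next step is to invoke Lemma \ref{lemma count of squarefree} at $n=1$, which gives $W^*(X)\gg_E X$, so the prefactor $X/(LW^*(X))$ is $\ll 1/\log X$. Applying Lemma \ref{lemma:small s} to $T(S)$ (valid since we will choose $1\leq S\leq X^2$) yields $T(S)\ll_{\varepsilon,E} SX^{\sigma-1+\varepsilon}$. Altogether,
\begin{equation*}
S^*_{\text{odd}} \ll_{\varepsilon,E} \frac{SX^{\sigma-1+\varepsilon}}{\log X} + \frac{X^{\varepsilon}(\log S)^3}{S} + X^{-3/4+\varepsilon}.
\end{equation*}

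To optimize, I would balance the first two terms by setting $S = X^{(1-\sigma)/2}$. Since $0\leq\sigma<1$, this choice satisfies $1\leq S\leq X^{1/2}$, which lies in the range permitted by Lemma \ref{lemma:small s}. With this choice both of the first two terms become $O_{\varepsilon,E}(X^{(\sigma-1)/2+\varepsilon})$ after absorbing logarithmic factors into $X^\varepsilon$. Finally, because $(\sigma-1)/2 > -3/4$ for all $\sigma\in[0,1)$, the $X^{-3/4+\varepsilon}$ error term is dominated, giving the claimed bound $S^*_{\text{odd}} \ll_{\varepsilon,E} X^{(\sigma-1)/2+\varepsilon}$.

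The real work has already been absorbed into the two preceding lemmas; the only genuine decision in this final step is to verify that the optimal $S$ lies in the admissible range and to check that the Riemann-Hypothesis-driven error $X^{-3/4+\varepsilon}$ from Lemma \ref{lemma:firststepsf} is never the dominant one. Thus there is no real obstacle here — just bookkeeping — and I would not expect any surprises in writing out the argument in full.
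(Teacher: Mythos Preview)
Your proposal is correct and follows exactly the paper's approach: the paper's proof is the single line ``Take $S=X^{\frac{1-\sigma}2}$ in Lemmas \ref{lemma:secondPoisson} and \ref{lemma:small s},'' and you have simply filled in the straightforward bookkeeping (verifying $1\leq S\leq X^{1/2}\leq X^2$, using $W^*(X)\gg_E X$, and checking that $X^{-3/4+\varepsilon}$ is dominated since $(\sigma-1)/2>-3/4$) that the paper leaves implicit.
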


\begin{proof}
Take $S=X^{\frac{1-\sigma}2}$ in Lemmas \ref{lemma:secondPoisson} and \ref{lemma:small s}.
\end{proof}

Finally, we complete the proof of Theorem \ref{third main theorem}.

\begin{proof}[Proof of Theorem \ref{third main theorem}]
The desired result follows directly from Corollary \ref{corollary 1 level densities}, Lemma \ref{lemma count of squarefree} and Theorem \ref{SqfreeSodd} (cf.\ the proof of Theorems \ref{main theorem} and \ref{second main theorem}).
\end{proof}

\appendix
\section{The ratios conjecture's prediction}
\label{Appendix A}

The lower order terms in the $1$-level density for the family of quadratic twists of a given elliptic curve $E$ with prime conductor and even sign of the functional equation was computed by Huynh, Keating and Snaith in \cite{HKS} using the Ratios Conjecture techniques of \cite{CFZ} and \cite{CS}. In this appendix we perform the corresponding calculations in the context of our weighted family of all quadratic twists coprime to the (not necessarily prime) conductor $N_E$ of the given elliptic curve $E$. Throughout this section we assume the Riemann Hypothesis for all $L$-functions that we encounter. As in Sections \ref{section all d} and \ref{section square free}, every error term in this section is allowed to depend on $\phi$ and $w$, but we now allow an additional dependence on $E$.

\begin{theorem}
\label{oneleveldensityresult}
Fix $\varepsilon>0$. Let $E$ be an elliptic curve defined over $\Q$ with conductor $N_E$. Let $w$ be a nonnegative Schwartz function on $\R$ which is not identically zero and let $\phi$ be an even Schwartz function on $\R$ whose Fourier transform has compact support. Assuming GRH and Conjecture \ref{ratiosconjecture} (the Ratios Conjecture for our family), the $1$-level density for the zeros of the family of $L$-functions attached to the quadratic twists of $E$ coprime to $N_E$ is given by
\begin{align*}
\mathcal D(\phi;X)=& \frac {\widehat \phi (0)}{LW(X)} \sumt\log\bigg(\frac{N_{E}d^2}{(2\pi)^2}
\bigg) + \frac{1}{2\pi}\int_{\mathbb R}
\phi\left(\frac{tL}{2\pi}\right)\bigg[\frac{\Gamma'(1+it) }{\Gamma(1+it) }\\
 & +\frac{\Gamma'(1-it)}{\Gamma(1-it)} + 2 \bigg(-\frac{\zeta'(1+2it)}{\zeta(1+2it)} +\frac{L'\left(1+2it,{\rm Sym}^2E\right)}{L\left(1+2it,{\rm Sym}^2E\right)}+A_{\alpha,E}(it,it)\bigg)- \frac{1}{it} \bigg]dt\\
&+  \frac{\phi(0)}{2} + O_{\varepsilon}\big(X^{-1/2+\varepsilon}\big),
\end{align*}
where $^*$ indicates that we are summing over square-free $d$, the functions $L$, $W$ and $\widetilde w$ are defined by \eqref{Ldefinition}, \eqref{W-sum} and \eqref{wtildeintro} respectively, $L\left(s,{\rm Sym}^2E\right)$ is the symmetric square $L$-function of $E$ (see \eqref{defofsymsqalpha}), and the function $A_{\alpha,E}$ is defined by \eqref{AALPHAE} (see also \eqref{vnmid}, \eqref{vmid}, \eqref{defofy} and \eqref{defnofae}). 
\end{theorem}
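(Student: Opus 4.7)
The plan is to adapt to our setting the Conrey--Snaith recipe \cite{CS} for turning the Ratios Conjecture into a one-level density formula, essentially repeating the calculation of \cite{HKS} with two modifications: our weight is $\widetilde w(d/X)$ rather than a sharp cutoff, and $N_E$ is not assumed prime. As a first step, for each admissible $d$ I would express $D_X(E_d;\phi)$ as a contour integral of the logarithmic derivative of $L(s,E_d)$ against $\phi\bigl(\tfrac{-iL}{2\pi}(s-\tfrac12)\bigr)$, then shift the contour onto two vertical lines $\Re(s)=\tfrac12\pm c$ with small $c>0$ and use the functional equation \eqref{functionaltwist} to flip the line $\Re(s)=\tfrac12-c$. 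This produces, for each $d$, an archimedean piece containing $\log(N_Ed^2/(2\pi)^2)/L$ and a $d$-independent gamma-factor integral, plus an integral along $\Re(s)=\tfrac12$ of twice the real part of $L'(s,E_d)/L(s,E_d)$ against $\phi(tL/(2\pi))$.

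Averaging over $d$ against $\widetilde w(d/X)/W(X)$ reduces the problem to evaluating $\frac{1}{W(X)}\sumt \frac{L'(1/2+it,E_d)}{L(1/2+it,E_d)}$. This quantity is obtained from Conjecture \ref{ratiosconjecture}, which predicts the average of the ratio $L(1/2+\alpha,E_d)/L(1/2+\gamma,E_d)$, via the standard identity $\frac{L'}{L}(1/2+\gamma,E_d) = \partial_\alpha\bigl[\frac{L(1/2+\alpha,E_d)}{L(1/2+\gamma,E_d)}\bigr]\bigr|_{\alpha=\gamma}$. The conjectured formula splits as a \emph{diagonal} piece built from $\zeta(1+\alpha+\gamma)$, $L(1+2\gamma,\mathrm{Sym}^2E)^{-1}$, a gamma ratio and the Euler product $A_{\alpha,E}(\alpha,\gamma)$ defined in \eqref{AALPHAE}, plus a \emph{swap} piece of size $X^{-\alpha-\gamma}$ arising from the recipe's use of the functional equation. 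Differentiating in $\alpha$ and setting $\alpha=\gamma=it$ produces from the diagonal piece exactly the bracketed integrand in the theorem: the gamma derivatives come from differentiating the gamma ratio; $-2\zeta'/\zeta(1+2it)$ from differentiating $\zeta(1+\alpha+\gamma)$; $2L'/L(1+2it,\mathrm{Sym}^2E)$ from differentiating its logarithm; and $2A_{\alpha,E}(it,it)$ from differentiating the arithmetic factor. The apparent pole of $-\zeta'/\zeta(1+\alpha+\gamma)$ at $\alpha=\gamma=0$ is precisely what the subtracted $-1/(it)$ cancels, making the integrand regular on $\mathbb{R}$. The error $O_\varepsilon(X^{-1/2+\varepsilon})$ is inherited directly from the Ratios Conjecture.

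The main obstacle is the rigorous treatment of the \emph{swap} contribution: inserted into the $t$-integral it becomes an oscillatory integrand containing $X^{-2it}$, which cannot be estimated by absolute values. I plan to shift the contour of the $t$-integral into the lower half-plane; the shifted integral is $O_\varepsilon(X^{-1/2+\varepsilon})$ by rapid decay of $\phi(tL/(2\pi))$ combined with the fact that $X^{-2it}$ decays on the shifted line, while the shift sweeps across a single simple pole at $t=0$. A residue computation, using that at $\alpha=\gamma=0$ the archimedean factors combine with $A_{\alpha,E}(0,0)$ (and the explicit form of \eqref{AALPHAE}) to give $1$, produces exactly the Katz--Sarnak term $\phi(0)/2$. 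The first term of the theorem, namely $\widehat\phi(0)/(LW(X))\sumt \log(N_Ed^2/(2\pi)^2)$, comes directly from the archimedean piece of the initial contour shift, while Lemma \ref{lemma logd} and Remark \ref{whatiswx} ensure that this piece is controlled with an error no larger than $O_\varepsilon(X^{-1/2+\varepsilon})$, completing the proof.
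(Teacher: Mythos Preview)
Your outline has the right shape (contour integral, functional equation, substitute the averaged $L'/L$, shift to the imaginary axis), but there is a genuine gap in how you obtain the term $\phi(0)/2$, stemming from a misreading of Conjecture~\ref{ratiosconjecture}.

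For \emph{this} family --- all twists coprime to $N_E$, with both signs of the functional equation --- the recipe's dual (``swap'') contribution $R_2(\alpha,\gamma)$ carries the root number $\epsilon_{E_d}$, whose average over the family is zero. Accordingly, Conjecture~\ref{ratiosconjecture} as stated in the paper contains \emph{only} the diagonal piece $Y_E(\alpha,\gamma)A_E(\alpha,\gamma)$; there is no swap term, no factor $X^{-\alpha-\gamma}$, and no gamma ratio inside the conjectured main term. (You may be thinking of \cite{HKS}, which treats the even subfamily and does have a swap term.) Consequently your plan to extract $\phi(0)/2$ as a residue of the swap contribution cannot work: there is nothing there to shift. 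Relatedly, the gamma derivatives $\Gamma'/\Gamma(1\pm it)$ do \emph{not} arise from differentiating the Ratios Conjecture; they come from $-X_{E_d}'(1/2+r)/X_{E_d}(1/2+r)$, i.e.\ from the functional-equation step you already described correctly in your first paragraph.

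In the paper's argument, after substituting Lemma~\ref{ratiostheorem} into the integral along $\Re(r)=c'>0$, the integrand $2\bigl(-\tfrac{\zeta'}{\zeta}(1+2r)+\dots\bigr)-\tfrac{X_{E_d}'}{X_{E_d}}(1/2+r)$ has a simple pole at $r=0$ with residue $1$, coming entirely from $-2\zeta'/\zeta(1+2r)$. One then writes this integrand as its regularized version (with $1/r$ subtracted, which is the $-1/(it)$ in the statement) plus $1/r$; the first piece shifts harmlessly to $\Re(r)=0$, and the remaining $\tfrac{1}{2\pi i}\int_{(c')}\phi(iLr/2\pi)\,r^{-1}\,dr$ equals $\phi(0)/2$. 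Note also that the substitution of Lemma~\ref{ratiostheorem} must be made on the line $\Re(r)=c'>0$ \emph{before} shifting, since the lemma requires $\Re(r)\gg 1/\log X$; moving the $L'/L$ integral onto the critical line for each individual $d$, as your first paragraph suggests, runs into the zeros of $L(s,E_d)$.
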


Rewriting the rather complicated expression for the function $A_{\alpha,E}$, we obtain the following alternative formula for the sum of the second and third terms appearing in Theorem \ref{oneleveldensityresult}. 

\begin{theorem}
\label{extendedoneleveldensityresult}
Fix $\varepsilon>0$. Let $E$ be an elliptic curve defined over $\Q$ with conductor $N_E$, and let $\phi$ be an even Schwartz function on $\R$ whose Fourier transform has 
compact support. We have the following expression for the sum of the second and third terms appearing in Theorem \ref{oneleveldensityresult}: 
\begin{align*}
\frac{1}{2\pi}&\int_{\mathbb R}
\phi\left(\frac{tL}{2\pi}\right)\bigg[\frac{\Gamma'(1+it) }{\Gamma(1+it) }
 +\frac{\Gamma'(1-it)}{\Gamma(1-it)} + 2 \bigg(-\frac{\zeta'(1+2it)}{\zeta(1+2it)} +\frac{L'\left(1+2it,{\rm Sym}^2E\right)}{L\left(1+2it,{\rm Sym}^2E\right)}\\&+A_{\alpha,E}(it,it)\bigg)- \frac{1}{it} \bigg]dt+  \frac{\phi(0)}{2}=-\frac{2} L\int_0^{\infty} \left( \frac{\widehat \phi(x/L) e^{-x} }{1-e^{-x}} - \widehat \phi(0) \frac{e^{-x}}x \right)dx\nonumber\\
&-\frac {2}{L}\sum_{\ell=1}^\infty \sum_{p} \frac{\left(\alpha_E(p)^{2\ell}+\beta_E(p)^{2\ell}\right) \log p}{p^{\ell}} \widehat \phi\left( \frac{2\ell\log p}{L} \right) \left( 1+\frac{\psi_{N_E}(p)}p \right)^{-1}+O_{\epsilon}\big(X^{-1+\varepsilon}\big), 
\end{align*}
where $\psi_{N_E}$ is the principal Dirichlet character modulo $N_E$ and the function $L$ is defined by \eqref{Ldefinition}.
\end{theorem}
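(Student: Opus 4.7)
The plan is to separately identify the two distinct blocks on the left-hand side — the archimedean block consisting of $\frac{\Gamma'(1+it)}{\Gamma(1+it)} + \frac{\Gamma'(1-it)}{\Gamma(1-it)} - \frac{1}{it}$ together with the $\frac{\phi(0)}{2}$ outside the integral, and the arithmetic block $2\bigl(-\zeta'/\zeta(1+2it) + L'/L(1+2it,\mathrm{Sym}^2 E) + A_{\alpha,E}(it,it)\bigr)$ — and to match each of them with the corresponding term on the right-hand side by Fourier inversion.

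For the archimedean block, I would start from the standard integral representation of the digamma function,
\[
\frac{\Gamma'(1+it)}{\Gamma(1+it)} = -\gamma + \int_0^\infty \frac{e^{-x} - e^{-(1+it)x}}{1-e^{-x}}\,dx,
\]
add its complex-conjugate version, and interchange the $t$- and $x$-integrations. The interchange is justified by the rapid decay of $\phi(tL/2\pi)$ and by regularizing the integrand near $x=0$. Using
\[
\int_{\mathbb R}\phi\!\left(\tfrac{tL}{2\pi}\right) e^{-itx}\,dt = \frac{2\pi}{L}\,\widehat\phi\!\left(\tfrac{x}{L}\right),
\]
this produces the integrand $\widehat\phi(x/L)e^{-x}/(1-e^{-x})$ on $(0,\infty)$. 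The role of the $-1/(it)$ inside the bracket is to cancel the simple pole of $-2\zeta'/\zeta(1+2it)$ at $t=0$ so that the $t$-integral converges; the $\phi(0)/2$ outside the integral exactly accounts for the residue picked up by this pole and simultaneously combines with $\widehat\phi(0) e^{-x}/x$ to give the regulator that appears on the right-hand side. Careful bookkeeping of these cancellations reproduces the first term on the right.

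For the arithmetic block, I would expand the three logarithmic derivatives as Dirichlet series. A direct comparison of Euler products using $\alpha_E(p)\beta_E(p)=1$ at $p\nmid N_E$ gives
\[
-\frac{\zeta'(s)}{\zeta(s)} + \frac{L'(s,\mathrm{Sym}^2 E)}{L(s,\mathrm{Sym}^2 E)} = -\sum_p\sum_{\ell\ge 1}\frac{(\alpha_E(p)^{2\ell}+\beta_E(p)^{2\ell})\log p}{p^{\ell s}}
\]
for $p\nmid N_E$, with an analogous (simpler) contribution at $p\mid N_E$ where $\beta_E(p)=0$. The structural key is to verify, directly from the definitions \eqref{vnmid}--\eqref{AALPHAE}, that $A_{\alpha,E}(it,it)$ is given by an absolutely convergent Euler product whose local logarithmic derivative at each prime $p$ inserts precisely the factor $(1+\psi_{N_E}(p)/p)^{-1}$ into the sum above, while also reconciling the bad-prime Euler factors into the uniform shape stated on the right. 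Once this local identity is established, substitution into $\frac{1}{2\pi}\int_{\mathbb R}\phi(tL/2\pi)(\cdots)\,dt$ and the Fourier identity
\[
\int_{\mathbb R}\phi\!\left(\tfrac{tL}{2\pi}\right) p^{-2it\ell}\,dt = \frac{2\pi}{L}\,\widehat\phi\!\left(\tfrac{2\ell\log p}{L}\right)
\]
produces the claimed prime sum; term-by-term integration is valid because the compact support of $\widehat\phi$ truncates the double sum to finitely many $(p,\ell)$.

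The main obstacle is verifying the Euler-product identity for $A_{\alpha,E}(it,it)$ from its rather intricate definition in Appendix \ref{Appendix A}, i.e.\ unwinding \eqref{vnmid}, \eqref{vmid}, \eqref{defofy}, \eqref{defnofae}, \eqref{AALPHAE}, and showing prime by prime that the combined local factor of $-\zeta'/\zeta(1+2it) + L'/L(1+2it,\mathrm{Sym}^2 E) + A_{\alpha,E}(it,it)$ is exactly $-(\alpha_E(p)^{2\ell}+\beta_E(p)^{2\ell})(\log p)\,p^{-\ell(1+2it)}(1+\psi_{N_E}(p)/p)^{-1}$. This is a mechanical but lengthy multiplicative computation. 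The $O_\varepsilon(X^{-1+\varepsilon})$ error term then absorbs the contribution of the finitely many bad primes $p\mid N_E$ (bounded trivially using $|\alpha_E(p)|\le 1$ and the $1/L$ prefactor), together with negligible remainders coming from any approximation of $A_{\alpha,E}$ outside its Euler-product form.
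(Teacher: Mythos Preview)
Your overall strategy is correct and close to the paper's, but two points deserve comment.

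First, your separation of the integrand into an ``archimedean block'' $\Gamma'/\Gamma+\Gamma'/\Gamma-1/(it)$ and an ``arithmetic block'' $2(-\zeta'/\zeta+L'/L+A_{\alpha,E})$ is awkward on the line $\Re(r)=0$: each block is individually singular at $t=0$, and your remark that $-1/(it)$ ``cancels the pole of $-2\zeta'/\zeta$'' sits uneasily with having already placed these two terms in \emph{different} blocks. The paper avoids this entirely by first passing (via Remark~\ref{RATIOSONELEV2}, which is a corollary of the contour shift in the proof of Theorem~\ref{oneleveldensityresult}) to the line $\Re(r)=c'>0$, where the $\phi(0)/2$ has already been absorbed as the residue of the shift, the Gamma block has been identified with the RHS integral via Mestre's Lemme~I.2.1, and the Dirichlet series for $\zeta'/\zeta$, $L'/L$ and $A_{\alpha,E}$ all converge absolutely. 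Your direct digamma computation is essentially Mestre's lemma, so nothing is lost, but you should make the contour shift explicit rather than trying to regulate both blocks on the real line.

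Second, your account of where the $O_\varepsilon(X^{-1+\varepsilon})$ comes from is wrong. The local identity you describe --- that $-\zeta'/\zeta(1+2r)+L'/L(1+2r,\mathrm{Sym}^2E)+A_{\alpha,E}(r,r)$ has $p$th Dirichlet coefficient exactly $-(\alpha_E(p)^{2\ell}+\beta_E(p)^{2\ell})(1+\psi_{N_E}(p)/p)^{-1}$ --- holds \emph{exactly} at every prime, good or bad (for $p\nmid N_E$ one gets $-1+\tfrac1{p+1}=-\tfrac{p}{p+1}$; for $p\mid N_E$ one gets $-1$). There is no ``approximation of $A_{\alpha,E}$ outside its Euler-product form'' and no bad-prime remainder to absorb. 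In the paper the error term arises only because, after reaching the exact expression~\eqref{lookslikesevenone}, the authors take a detour through Lemma~\ref{sumofpdivd} to rewrite $\tfrac1{p+1}$ as $\tfrac1{W(X)}\sum_{p\mid d}^{*}\widetilde w(d/X)$ up to $O_\varepsilon((p^2XN_E)^\varepsilon)$, in order to match the form of $S_{\text{even}}$ in Section~\ref{PRIMESUMSEC}. Your direct route actually yields the identity with no error at all, which is a cleaner argument than the paper's; but you should then state this, not invent a spurious source for the error term.
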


\begin{remark}
The proof of Theorem \ref{oneleveldensityresult} can, with only minimal changes, be turned into a proof of the corresponding result for the weighted family of all \emph{square-free} quadratic twists coprime to the conductor $N_E$ of the given elliptic curve $E$. We record this result (combined with Theorem \ref{extendedoneleveldensityresult}) here for convenience:

Let $E$, $w$, $\phi$ and $\varepsilon$ be as in Theorem \ref{oneleveldensityresult}. Assuming GRH and Conjecture \ref{ratiosconjecture}, the $1$-level density for the zeros of the family of $L$-functions attached to the square-free quadratic twists of $E$ coprime to $N_E$ is given by
\begin{align*}
\mathcal D^*(\phi;X)&=\frac {\widehat \phi(0)}{LW^*(X)}  \sumtstar \log \bigg(\frac{N_{E} d^2}{(2\pi)^2}\bigg)-\frac{2} L\int_0^{\infty} \left( \frac{\widehat \phi(x/L) e^{-x} }{1-e^{-x}} - \widehat \phi(0) \frac{e^{-x}}x \right)dx\nonumber\\
&-\frac {2}{L}\sum_{\ell=1}^\infty \sum_{p} \frac{\left(\alpha_E(p)^{2\ell}+\beta_E(p)^{2\ell}\right) \log p}{p^{\ell}} \widehat \phi\left( \frac{2\ell\log p}{L} \right) \left( 1+\frac {\psi_{N_E}(p)}p \right)^{-1} 
+O_{\varepsilon}\big(X^{-1/2+\varepsilon}\big),
\end{align*}
where the functions $L$ and $W^*$ are defined by \eqref{Ldefinition} and \eqref{Wstar-sum} respectively.  
\end{remark}

\begin{remark}
We prove Theorems \ref{oneleveldensityresult} and \ref{extendedoneleveldensityresult} for Schwartz test functions $\phi$ for which the Fourier transforms have compact support. This is a more restricted class of test functions than is typically used in results based on the Ratios Conjecture. However, this class is more than sufficient for our purposes in the present paper. Let us also point out that even though we could, with more work, prove Theorem \ref{oneleveldensityresult} for a larger class of test functions, we are at present not aware of any proof of Theorem \ref{extendedoneleveldensityresult} which avoids the assumption that the test functions $\phi$ have compactly supported Fourier transforms.   
\end{remark}

\subsection{Proof of Theorem \ref{oneleveldensityresult}}

To begin, we derive the appropriate version of the Ratios Conjecture. Thus we consider the sum
\begin{equation} \label{ralphgam}
R(\alpha,\gamma):=\frac1{W(X)}\sumt \frac{L\left(\frac{1}{2}+\alpha,E_d\right)}{L\left(\frac{1}{2}+\gamma,E_d\right)}.
\end{equation}
In order to rewrite the expression for $R(\alpha,\gamma)$ we recall two well-known formulas. The first formula is
\begin{equation} \label{lemobius}
\frac{1}{L(s,E_d)} = \sum_{n=1}^\infty
\frac{\mu_{E}(n)\chi_d(n)}{n^s},
\end{equation}
where $\mu_{E}$ is the multiplicative function given by
\begin{equation*} 
\mu_{E}(p^k) =
\begin{cases}
-\lambda_E(p) & \mbox{~if~} k = 1,\\
\psi_{N_E}(p) & \mbox{~if~} k = 2,\\
0 & \mbox{~if~} k > 2,
\end{cases}
\end{equation*}
and $\psi_{N_E}$ is the principal Dirichlet character modulo $N_E$. The second formula is the approximate functional equation for $L(s,E_d)$:
\begin{equation}
L\left(s, E_d\right) = \sum_{n<x} \frac{\lambda_E(n)\chi_d(n)}{n^s}+\epsilon_{E_d}\left(\frac{\sqrt{N_{E}}|d|}{2\pi}\right)^{1-2s}\frac{\Gamma\left(\frac{3}{2}-s\right)}{\Gamma\left(\frac{1}{2}+s\right)} \sum_{n<y}
\frac{\lambda_E(n)\chi_d(n)}{n^{1-s}}+\:{\rm Error},\label{approxeq}
\end{equation}
where $xy=d^2/(2\pi)$. As a part of the Ratios Conjecture recipe, we will in the following calculations  disregard the error term and complete the sums (i.e.\ replace $x$ and $y$ with infinity).

Following \cite{CFZ}, we  replace the
numerator of $\eqref{ralphgam}$ with the approximate functional equation $\eqref{approxeq}$ (modified as above) and the
denominator of $\eqref{ralphgam}$ with $\eqref{lemobius}$. We will focus on
the principal sum from the approximate functional equation in $\eqref{approxeq}$  evaluated at $s=\frac{1}{2}+\alpha$, which gives the contribution
\begin{equation} \label{ragone}
R_1(\alpha, \gamma):=\frac1{W(X)}\sumt \sum_{h,m} \frac{\lambda_E(m)\mu_{E}(h)\chi_d(hm)}{m^{\frac{1}{2} +
\alpha}h^{\frac{1}{2}+\gamma}}
\end{equation}
to \eqref{ralphgam}. We also have to consider the sum coming from
replacing the dual sum from the approximate functional equation (the second sum in $\eqref{approxeq}$)
in $\eqref{ralphgam}$, namely the sum
\begin{equation} \label{ragtwo}
R_2(\alpha, \gamma):=\frac1{W(X)}\sumt \epsilon_{E_d}X_{E_d}\left(\tfrac{1}{2} + \alpha\right)\sum_{h,m} \frac{\lambda_E(m)\mu_{E}(h)\chi_d(hm)}{m^{\frac{1}{2}-\alpha}h^{\frac{1}{2}+\gamma}},
\end{equation}
where
\begin{equation} \label{xe}
X_{E_d}(s):=\frac{\Gamma\left(\frac{3}{2}-s\right)}{\Gamma\left(\frac{1}{2}+s\right)}\left(\frac{\sqrt{N_{E}}|d|}{2\pi}\right)^{1-2s}.
\end{equation}
However, the next step is to replace the root numbers in \eqref{ragtwo} with their expected value when averaged over the
family. In this family the expected value of the root numbers is zero; thus we replace $R_2(\alpha,\gamma)$ by zero. 

Continuing the Ratios Conjecture procedure, we replace $\wt\chi_d(hm)$ in $\eqref{ragone}$ with its average over the set being summed.
From Lemma \ref{lemmaweighted} and Remark \ref{whatiswx}, we have that
\begin{align*}
&\frac{1}{W(X)}\sumt\chi_d(hm) \\ 
&=\begin{cases}\prod_{p\mid hm} \left( \frac{p}{p+1}\right)\prod_{\substack{p\mid (hm,N_E)}} \left( 1+\frac{1}{p} \right) +O_{h,m,\varepsilon}(X^{-1+\varepsilon}) & {\rm if} \: hm = \square,\\
O_{h,m,\varepsilon}(X^{-1+\varepsilon})  & {\rm otherwise}.
\end{cases}
\end{align*} 
So the main contribution to the sum in $\eqref{ragone}$ occurs when $hm=\square$ and we will disregard the non-square terms and the error terms. Hence, following the recipe, we have that $\eqref{ragone}$ is replaced with
\begin{equation*}
\widetilde R_1(\alpha, \gamma)=
\sum_{hm=\square} \frac{\lambda_E(m)\mu_{E}(h)}{m^{\frac{1}{2} +
\alpha}h^{\frac{1}{2}+\gamma}}\prod_{p\mid hm} \left( \frac{p}{p+1}\right) \prod_{\substack{p\mid (hm,N_E)}} \left( 1+\frac{1}{p} \right),
\end{equation*}
and writing this as an Euler product gives
\begin{align*}
\widetilde R_1(\alpha, \gamma)=
\prod_{p\nmid N_E}\Bigg(1+\frac{p}{p+1}\sum_{\substack{e,k \geq 0 \\ e+k>0 \\ e+k \:{\rm even}}}\frac{\lambda_E(p^e)\mu_E(p^k)}{p^{e(\frac{1}{2}+\alpha)+k\left(\frac{1}{2}+\gamma\right)}}\Bigg) \prod_{p\mid N_E}\Bigg(\sum_{\substack{e,k \geq 0 \\ e+k \:{\rm even}}}\frac{\lambda_E(p^e)\mu_E(p^k)}{p^{e(\frac{1}{2}+\alpha)+k\left(\frac{1}{2}+\gamma\right)}}\Bigg).
\end{align*}
Since $\mu_E(p^k)=0$ for $k>2$, we need only consider the cases $k=0,1$ or 2 and we define
\begin{align}
V_{\nmid}(\alpha,\gamma):=& \prod_{p\nmid N_E} \left(1 + \frac{p}{p+1}\left(\sum_{e=1}^{\infty}\frac{\lambda_E(p^{2e})}{p^{e(1+2\alpha)}}-\frac{\lambda_E(p)}{p^{1+\alpha+\gamma}}\sum_{e=0}^\infty\frac{\lambda_E(p^{2e+1})}{p^{e(1+2\alpha)}} +\frac{1}{p^{1+2\gamma}}\sum_{e=0}^\infty\frac{\lambda_E(p^{2e})}{p^{e(1+2\alpha)}}\right)\right)\label{vnmid}
\end{align}
and
\begin{align}
V_{\mid}(\alpha,\gamma):=& \prod_{p\mid N_E}\left(\sum_{e=0}^{\infty}\frac{\lambda_E(p^{2e})}{p^{e(1+2\alpha)}}-\frac{\lambda_E(p)}{p^{1+\alpha+\gamma}}\sum_{e=0}^\infty\frac{\lambda_E(p^{2e+1})}{p^{e(1+2\alpha)}}\right).\label{vmid}
\end{align}

In the Euler products in $\eqref{vnmid}$ and $\eqref{vmid}$, we factor out the terms that contribute poles and zeros to $\widetilde R_1(\alpha,\gamma)$ as $\alpha, \gamma \rightarrow 0$. We also factor out the symmetric square $L$-function associated with $L(s,E)$. Restricting our attention to $\alpha$ and $\gamma$ satisfying $\frac{-1+\delta}{4}< \Re(\alpha)< \frac{1}{4}$ and $\frac{1}{\log X} \ll \Re(\gamma)<\frac{1}{4}$ for some small $\delta>0$, we have
\begin{equation} \label{vnmidpoles}
V_{\nmid}(\alpha,\gamma)=\prod_{p\nmid N_E}\left(1+\frac{\lambda_E(p^2)}{p^{1+2\alpha}}-\frac{\lambda_E(p^2)+1}{p^{1+\alpha+\gamma}}+\frac{1}{p^{1+2\gamma}}+
O\left(\frac{1}{p^{1+\delta}}\right)\right).
\end{equation}
Furthermore, recalling that the symmetric square $L$-function can be written in the form
\begin{align}
L\left(s,{\rm Sym}^2 E\right)&=
\prod_{p}\left(1-\frac{\alpha_E(p)^2}{p^s}\right)^{-1}\left(1-\frac{\alpha_E(p)\beta_E(p)}{p^s}\right)^{-1}\left(1-\frac{\beta_E(p)^2}{p^s}\right)^{-1} \label{defofsymsqalpha}
\\&= \prod_{p\mid N_E}\left(1-\frac{\lambda_E(p^2)}{p^s}\right)^{-1}\prod_{p\nmid N_E}\left(1-\frac{\lambda_E(p^2)}{p^s}+\frac{\lambda_E(p^2)}{p^{2s}}-\frac{1}{p^{3s}}\right)^{-1},\label{defofsymsq}
\end{align}
we find that 
$$L\left(1+2\alpha,{\rm Sym}^2 E\right)=\prod_p\left(1+\frac{\lambda_E(p^2)}{p^{1+2\alpha}}+O\left(\frac{1}{p^{1+\delta}}\right)\right)$$
and
$$\frac{1}{\zeta(1+\alpha+\gamma)L\left(1+\alpha+\gamma,{\rm Sym}^2 E\right)}=\prod_p\left(1-\frac{\lambda_E(p^2)+1}{p^{1+\alpha+\gamma}}+O\left(\frac{1}{p^{3/2}}\right)\right).$$
 Finally, since there are only finitely many primes dividing $N_E$, it is clear that the factor $\zeta(1+2\gamma)$ will account for the divergence of the term
$\frac{1}{p^{1+2\gamma}}$ in $\eqref{vnmidpoles}$. Hence we can write
 \begin{align*}
 \widetilde R_1(\alpha,\gamma)= 
 V_{\nmid}(\alpha,\gamma)V_{\mid}(\alpha,\gamma) =
 Y_E(\alpha,\gamma)A_E(\alpha,\gamma),
 \end{align*}
where
\begin{equation}
Y_E(\alpha,\gamma):=\frac{\zeta(1+2\gamma)L\left(1+2\alpha,{\rm Sym}^2 E\right)}{\zeta(1+\alpha+\gamma)L\left(1+\alpha+\gamma,{\rm Sym}^2 E\right)},\label{defofy}
\end{equation}
and
\begin{equation}
A_E(\alpha,\gamma):=Y_E(\alpha,\gamma)^{-1}V_{\nmid}(\alpha,\gamma)V_{\mid}(\alpha,\gamma)\label{defnofae}
\end{equation}
is analytic as $\alpha,\gamma\rightarrow 0$. Thus the Ratios Conjecture for our weighted family of elliptic curve $L$-functions is given by:

\begin{conjecture} \label{ratiosconjecture}
Let $\varepsilon>0$ and let $w$ be a nonnegative Schwartz function on $\R$ which is not identically zero. Let $\delta>0$ and suppose that the complex numbers $\alpha$ and $\gamma$ satisfy $\frac{-1+\delta}{4}< \Re(\alpha)< \frac{1}{4}$, $\frac{1}{\log X} \ll\Re(\gamma)<\frac{1}{4}$ and $\Im(\alpha),\Im(\gamma) \ll X^{1-\varepsilon}$. Then we have that
\begin{equation*}
 \xf\sumt \frac{L(\frac{1}{2} +\alpha,E_d)}{L(\frac{1}{2} + \gamma,E_d)} = 
 Y_E(\alpha,\gamma)A_E(\alpha,\gamma)+ O_\varepsilon\big(X^{-1/2+\varepsilon}\big),
\end{equation*}
where $Y_E(\alpha,\gamma)$ is defined in $\eqref{defofy}$ and $A_E(\alpha,\gamma)$ is defined in $\eqref{defnofae}$.\footnote{We stress that the error term $O_{\varepsilon}(X^{-1/2+\varepsilon})$ is part of the statement of the Ratios Conjecture. Let us also point out that the condition on the imaginary parts of $\alpha$ and $\gamma$ is not used in the derivation of Conjecture \ref{ratiosconjecture}, but is included as a plausible (and by now standard) condition under which conjectures produced by the Ratios Conjecture recipe are expected to hold.}
\end{conjecture}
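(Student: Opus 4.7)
The statement is not a theorem to be proved rigorously but a conjecture derived via the Conrey--Farmer--Zirnbauer ``recipe'' adapted to our weighted family; the proposal is therefore to follow that recipe cleanly. First, I would replace the numerator $L(\tfrac12+\alpha,E_d)$ in \eqref{ralphgam} by the approximate functional equation \eqref{approxeq}, dropping the error term and extending the cutoffs $x,y$ to $\infty$, and simultaneously expand $1/L(\tfrac12+\gamma,E_d)$ via the Dirichlet series \eqref{lemobius}. This produces exactly the principal contribution $R_1(\alpha,\gamma)$ in \eqref{ragone} together with the dual contribution $R_2(\alpha,\gamma)$ in \eqref{ragtwo}.

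Second, the dual contribution $R_2$ carries the root number $\epsilon_{E_d}=\epsilon_E\,\mathrm{sgn}(d)\,\chi_d(N_E)$ recorded after \eqref{functionaltwist}. Since the family contains $d$ of both signs with both signs of $\chi_d(N_E)$ occurring symmetrically, the family average of $\epsilon_{E_d}$ vanishes, and the recipe instructs us to drop $R_2$ outright. Third, in $R_1$ I would interchange summations and apply Lemma \ref{lemmaweighted} to evaluate the character average $\tfrac{1}{W(X)}\sumt\chi_d(hm)$: only the diagonal $hm=\square$ survives to leading order, with an Euler product splitting into the $p\mid N_E$ and $p\nmid N_E$ factors, producing exactly $V_\nmid(\alpha,\gamma)V_\mid(\alpha,\gamma)$ of \eqref{vnmid}--\eqref{vmid}.

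Fourth, to match the claimed form I would isolate the analytic behavior near $\alpha,\gamma=0$: expanding the local factors of $V_\nmid V_\mid$ and comparing with the Euler product \eqref{defofsymsq} for $L(s,\mathrm{Sym}^2 E)$, I would recognize the divergent/vanishing pieces as $\zeta(1+2\gamma)$, $L(1+2\alpha,\mathrm{Sym}^2 E)$, $\zeta(1+\alpha+\gamma)^{-1}$ and $L(1+\alpha+\gamma,\mathrm{Sym}^2 E)^{-1}$, which together assemble into $Y_E(\alpha,\gamma)$. The complementary factor is then $A_E(\alpha,\gamma):=Y_E(\alpha,\gamma)^{-1}V_\nmid V_\mid$, which is holomorphic near the origin because each residual local factor is $1+O(p^{-1-\delta})$ in the stated strip. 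Finally, the error term $O_\varepsilon(X^{-1/2+\varepsilon})$ is postulated by analogy with every other instance of the Ratios Conjecture and is not derived from the recipe itself.

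The main obstacle is of course that none of these moves is rigorous: dropping the error in \eqref{approxeq}, completing the sums beyond their natural cutoffs, replacing the root number by its mean, and (most seriously) discarding the off-diagonal $hm\neq\square$ contribution, which Lemma \ref{lemmaweighted} controls termwise but with no uniformity in $h,m$, are all heuristic. Consequently, the statement is genuinely a conjecture rather than a theorem, and the predicted square-root savings cannot be substantiated within the recipe; indeed, a central point of the present paper is that in restricted ranges of $\mathrm{supp}\,\widehat\phi$ one can, by genuine analytic input (Poisson summation and P\'olya--Vinogradov), obtain estimates sharper than this conjectural prediction.
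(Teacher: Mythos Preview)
Your proposal is correct and follows essentially the same approach as the paper: you apply the Conrey--Farmer--Zirnbauer recipe step by step (approximate functional equation for the numerator, Dirichlet series for the denominator, drop the dual sum $R_2$ by the vanishing of the average root number, retain only the diagonal $hm=\square$ via Lemma~\ref{lemmaweighted}, and factor the resulting Euler product as $Y_E\cdot A_E$), and you correctly emphasize that the statement is a conjecture with the $O_\varepsilon(X^{-1/2+\varepsilon})$ error term postulated rather than derived.
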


We require the family average of the logarithmic derivative of the $L$-functions $L(s,E_d)$ in our calculation of the $1$-level density. Thus we differentiate the result of Conjecture \ref{ratiosconjecture} with respect to $\alpha$. First we define
\begin{equation}\label{AALPHAE}
A_{\alpha,E}(r,r) := \frac{\partial}{\partial
\alpha}A_E(\alpha,\gamma)\bigg|_{\alpha=\gamma=r}.
\end{equation}

\begin{lemma} \label{ratiostheorem} 
Let $\varepsilon>0$ and let $w$ be a nonnegative Schwartz function on $\R$ which is not identically zero. Suppose that $r \in \CC$ satisfies $\frac{1}{\log X} \ll\Re(r)<\frac{1}{4}$ and $\Im(r)\ll X^{1-\varepsilon}$. Then, assuming ECRH and Conjecture \ref{ratiosconjecture}, we have that
\begin{multline}
 \xf\sumt \frac{L'(\frac{1}{2} + r,E_d)}{L(\frac{1}{2} + r,E_d)} \\
= -\frac{\zeta'(1+2r)}{\zeta(1+2r)} +\frac{L'\left(1+2r,{\rm Sym}^2 E\right)}{L\left(1+2r,{\rm Sym}^2 E\right)}+A_{\alpha,E}(r,r)+ O_{\varepsilon}\big(X^{-1/2+\varepsilon}\big).\label{ratiostheoremthree}
\end{multline}
\end{lemma}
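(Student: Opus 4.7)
The plan is to extract the logarithmic derivative by differentiating the identity from Conjecture \ref{ratiosconjecture} with respect to $\alpha$ at $\alpha=\gamma=r$. Write
\begin{equation*}
F(\alpha) := \frac{1}{W(X)}\sumt \frac{L\bigl(\tfrac12+\alpha,E_d\bigr)}{L\bigl(\tfrac12+r,E_d\bigr)} - Y_E(\alpha,r)A_E(\alpha,r),
\end{equation*}
regarded as an analytic function of $\alpha$ on the strip described in Conjecture~\ref{ratiosconjecture} (the sum on the right is a finite sum of meromorphic functions, and by ECRH the denominator is nonvanishing). The conjecture guarantees $F(\alpha)=O_\varepsilon(X^{-1/2+\varepsilon})$ uniformly for $\alpha$ in a neighborhood of $r$ that avoids the boundary of validity. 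Since
\begin{equation*}
\frac{L'\bigl(\tfrac12+r,E_d\bigr)}{L\bigl(\tfrac12+r,E_d\bigr)} = \frac{\partial}{\partial\alpha}\Bigg|_{\alpha=r}\frac{L\bigl(\tfrac12+\alpha,E_d\bigr)}{L\bigl(\tfrac12+r,E_d\bigr)},
\end{equation*}
the left-hand side of \eqref{ratiostheoremthree} equals $\partial_\alpha[Y_E(\alpha,r)A_E(\alpha,r)]|_{\alpha=r} + F'(r)$.

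To control $F'(r)$ I would apply Cauchy's integral formula
\begin{equation*}
F'(r) = \frac{1}{2\pi i}\oint_{|\alpha-r|=\rho}\frac{F(\alpha)}{(\alpha-r)^2}\,d\alpha
\end{equation*}
on a circle of radius $\rho$ chosen small enough so that the entire circle lies in the region where Conjecture~\ref{ratiosconjecture} applies (and where the imaginary-part condition $|\Im(\alpha)|\ll X^{1-\varepsilon}$ is preserved). Taking $\rho$ to be a fixed small positive constant (shrinking $\delta$ if necessary so that the horizontal strip $\frac{-1+\delta}{4}<\Re(\alpha)<\frac14$ contains the disk), the trivial estimate yields $F'(r)\ll \rho^{-1}X^{-1/2+\varepsilon}=O_\varepsilon(X^{-1/2+\varepsilon})$, which is the desired error term. (If necessary one can instead take $\rho=(\log X)^{-1}$, absorbing the resulting $\log X$ into $X^\varepsilon$.)

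It remains to compute the main term $\partial_\alpha[Y_E(\alpha,r)A_E(\alpha,r)]|_{\alpha=r}$. The Leibniz rule gives
\begin{equation*}
\tfrac{\partial}{\partial\alpha}\bigl[Y_EA_E\bigr]\bigl|_{\alpha=\gamma=r}=Y_{E,\alpha}(r,r)A_E(r,r)+Y_E(r,r)A_{\alpha,E}(r,r).
\end{equation*}
From \eqref{defofy} we read off $Y_E(r,r)=1$, and logarithmic differentiation gives
\begin{equation*}
Y_{E,\alpha}(r,r)=\frac{L'(1+2r,{\rm Sym}^2E)}{L(1+2r,{\rm Sym}^2E)}-\frac{\zeta'(1+2r)}{\zeta(1+2r)}.
\end{equation*}
Finally, to show $A_E(r,r)=1$, I would specialize $\alpha=\gamma$ in the conjecture itself: the left-hand side of Conjecture \ref{ratiosconjecture} is identically $1$ on the diagonal, whence $Y_E(r,r)A_E(r,r)=1+O_\varepsilon(X^{-1/2+\varepsilon})$, and since $A_E$ is independent of $X$ this forces $A_E(r,r)=1$ (alternatively, this identity may be verified directly by cancelling the Euler products in \eqref{vnmid}--\eqref{defnofae}). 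Combining these pieces yields exactly the right-hand side of \eqref{ratiostheoremthree}.

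The main obstacle is the passage from the pointwise error bound in Conjecture \ref{ratiosconjecture} to a bound on the derivative. This is not logically deep, but it requires verifying that a circle of sufficient radius around $\alpha=r$ lies inside the region where the conjecture applies uniformly in $X$; the constraint $\Re(r)\gg 1/\log X$ limits how close to the edge of the domain one can operate, but since we only need a quantitative estimate and not an identity this is manageable with a Cauchy-type argument as above.
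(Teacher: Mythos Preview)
Your proposal is correct and follows essentially the same route as the paper's proof: differentiate Conjecture~\ref{ratiosconjecture} with respect to $\alpha$, evaluate the main term using $Y_E(r,r)=A_E(r,r)=1$, and invoke Cauchy's integral formula for derivatives to preserve the $O_\varepsilon(X^{-1/2+\varepsilon})$ error. The paper's proof is simply a two-sentence version of what you wrote out in detail. One minor slip: the weighted sum over $d$ is not a \emph{finite} sum of meromorphic functions but an absolutely convergent infinite one (by the rapid decay of $\widetilde w$); this does not affect the argument.
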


\begin{proof}
We have that
\begin{equation*}
\frac{\partial}{\partial \alpha}Y_E(\alpha, \gamma)A_E(\alpha,
\gamma)\bigg|_{\alpha=\gamma=r} = -\frac{\zeta'(1+2r)}{\zeta(1+2r)} + \frac{L'\left(1+2r,{\rm Sym}^2 E\right)}{L\left(1+2r,{\rm Sym}^2 E\right)}+A_{\alpha,E}(r,r),
\end{equation*}
which gives the main term in \eqref{ratiostheoremthree}. The fact that the error term remains the same under differentiation follows immediately from a standard argument based on Cauchy's integral formula for derivatives. 
\end{proof}

\begin{proof}[Proof of Theorem \ref{oneleveldensityresult}]
We recall from $\eqref{alternative1levelformula}$ that
\begin{equation*}
\mathcal D(\phi;X)=\xf\sumt \sum_{\gamma_d}\phi\left(\gamma_d\frac{L}{2\pi}\right).
\end{equation*}
Hence, by the argument principle, we have that
\begin{equation} \label{cauchydens}
\mathcal D(\phi;X)=\xf\sumt \frac{1}{2\pi
i}\left(\int_{(c)} - \int_{(1-c)}\right)\frac{L'(s,E_d)}{L(s,E_d)}
\phi\left(\frac{-iL}{2\pi}\left(s-\frac{1}{2}\right)\right) ds
\end{equation}
with $\frac{1}{2}+\frac{1}{\log X}<c<\frac34$.
 
For the integral on the line with real part $1-c$, making the change of variable $s \rightarrow 1-s$ and recalling that $\phi$
is even, we find that it equals
\begin{align}
\xf\sumt \frac{1}{2\pi i} \int_{(c)} \frac{L'(1-s,E_d)}{L(1-s,E_d)}
\phi\left(\frac{-iL}{2\pi}\left(s-\frac{1}{2}\right)\right)ds.
\label{loneminuss}
\end{align}
Also, using the functional equation $\eqref{functionaltwist}$ and $\eqref{xe}$, we obtain
\begin{equation}
 \label{labellprimeoverl}
\frac{L'(s,E_d)}{L(s,E_d)} = \frac{X_{E_d}'(s)}{X_{E_d}(s)} -
\frac{L'(1-s,E_d)}{L(1-s,E_d)}\,.
\end{equation}
Hence, from $\eqref{loneminuss}$ and $\eqref{labellprimeoverl}$ and making the change of variable $s = 1/2 + r$, we have that $\eqref{cauchydens}$ becomes
\begin{align}
\mathcal D(\phi;X)
=& \xf\sumt \frac{1}{2\pi i}\int_{(c-1/2)} \bigg[2 \frac{L'(1/2 +
r,E_d)}{L(1/2 + r,E_d)} - \frac{X_{E_d}'(1/2+r)}{X_{E_d}(1/2+r)}\bigg] \phi\left(\frac{iLr}{2\pi}\right)dr.
\label{checknine}
\end{align}
We bring the summation
inside the integral and substitute
\begin{equation*}
\xf\sumt\frac{L'(1/2 +r,E_d)}{L(1/2 + r,E_d)}
\end{equation*}
with the right-hand side of $\eqref{ratiostheoremthree}$. Note that this substitution is a priori valid only for $r$ with $\Im(r)<X^{1-\varepsilon}$. However, since $\widehat \phi$ is assumed to have compact support  on $\R$, it is clear that $\phi\left(\frac{iLr}{2\pi}\right)$ is rapidly decaying as $|\Im(r)|\to\infty$. This fact, together with standard estimates of the logarithmic derivative of $L$-functions in the half-plane $\Re(s)>\frac12$ (see,\ e.g.,\ \cite[Thm.\ 5.17]{IK}), make it possible to bound the tail of the integral in \eqref{checknine} (where we cannot apply Lemma \ref{ratiostheorem}) by $O_{\varepsilon}\left(X^{-1+\varepsilon}\right)$. Furthermore, applying the same tools to bound also the tail of the integral in \eqref{RATIOSONELEV}, we arrive at 
\begin{align}
\mathcal D(\phi;X)
=&\xf\sumt \frac{1}{2\pi i}\int_{(c-1/2)}  \bigg[-2 \frac{\zeta'(1+2r)}{\zeta(1+2r)} +2\frac{L'\left(1+2r,{\rm Sym}^2 E\right)}{L\left(1+2r,{\rm Sym}^2 E\right)}\nonumber\\
+& 2A_{\alpha,E}(r,r)-\frac{X_{E_d}'(1/2+r)}{X_{E_d}(1/2+r)}\bigg] \phi\left(\frac{iLr}{2\pi}\right)dr + O_{\varepsilon}\big(X^{-1/2+\varepsilon}\big).\label{RATIOSONELEV}
\end{align}

We now move the contour of integration from $\Re(r)=c-1/2=c'$ to
$\Re(r)=0$. However, the function
\begin{equation*} 
2 \left(-\frac{\zeta'(1+2r)}{\zeta(1+2r)} +\frac{L'\left(1+2r,{\rm Sym}^2E\right)}{L\left(1+2r,{\rm Sym}^2E\right)}+A_{\alpha,E}(r,r)\right)-
\frac{X_{E_d}'(1/2+r)}{X_{E_d}(1/2+r)}
\end{equation*}
has a pole at $r=0$ with residue 1. Thus, by Cauchy's Theorem, we have that
\begin{align*} \nonumber
\mathcal D(\phi;X)=&\xf\sumt \frac{1}{2\pi}
\int_{\mathbb R} \bigg[- 2 \frac{\zeta'(1+2it)}{\zeta(1+2it)}+2\frac{L ' \left(1+2it,{\rm Sym}^2 E\right)}{L\left(1+2it,{\rm Sym}^2 E\right)}\\
 & + 2A_{\alpha,E}(it,it)  +\log\left(\frac{N_{E}d^2}{(2\pi)^2}\right)+\frac{\Gamma'(1-it)}{\Gamma(1-it)}
+ \frac{\Gamma'(1+it)}{\Gamma(1+it)} - \frac{1}{it}\bigg] \,
\phi\left(\frac{tL}{2\pi}\right) \, dt \\
& +\xf \sumt \frac{1}{2 \pi i} \int_{(c')}   \frac{\phi\left(\frac{iLr}{2\pi}\right)}{r}  \,dr + O_{\varepsilon}\big(X^{-1/2+\varepsilon}\big).
\end{align*}
Finally, we note that 
\begin{equation*}
 \phi(0)= \frac{1}{2 \pi i} \int_{(c')}   \frac{\phi\left(\frac{iLr}{2\pi}\right)}{r} dr - \frac{1}{2 \pi i} \int_{(-c')}  \frac{\phi\left(\frac{iLr}{2\pi}\right)}{r} dr =
\frac{2}{2 \pi i} \int_{(c')}  \frac{\phi\left(\frac{iLr}{2\pi}\right)}{r}dr ,
\end{equation*}
which completes the proof.
\end{proof}

\begin{remark}\label{RATIOSONELEV2}
Note that, using \cite[Lemme I.2.1]{Me}, we also get the following useful formula as a corollary of  equation \eqref{RATIOSONELEV},
\begin{align*}
\mathcal D&(\phi;X)=\frac {\widehat \phi(0)}{LW(X)}  \sumt \log \bigg(\frac{N_{E}d^2}{(2\pi)^2}\bigg)-\frac{2} L\int_0^{\infty} \left( \frac{\widehat \phi(x/L) e^{-x} }{1-e^{-x}} - \widehat \phi(0) \frac{e^{-x}}x \right)dx\\
&+\frac{1}{\pi i}\int_{(c')}  \bigg[-\frac{\zeta'(1+2r)}{\zeta(1+2r)}+\frac{L'\left(1+2r,{\rm Sym}^2 E\right)}{L\left(1+2r,{\rm Sym}^2 E\right)}
+A_{\alpha,E}(r,r)\bigg] \phi\left(\frac{iLr}{2\pi}\right)dr+ O_{\varepsilon}\big(X^{-1/2+\varepsilon}\big).
\end{align*} 
\end{remark}

\subsection{Proof of Theorem \ref{extendedoneleveldensityresult}}

We determine the contribution of $A_{\alpha,E}$ to Theorem \ref{oneleveldensityresult} and Remark \ref{RATIOSONELEV2} by first obtaining a useful expansion for it. Note that $A_E(r,r)=1$ and hence, following \cite{HMM}, from $\eqref{defofy}$ and $\eqref{defnofae}$ we have that 
\begin{align}
A_{\alpha,E}(r,r)
=&\sum_{p\mid N_E}\log p \left[\frac{\frac{\lambda_E(p)^2}{p^{1+2r}}}{1-\frac{\lambda_E(p)^2}{p^{1+2r}}}
-\frac{\frac{1}{p^{1+2r}}}{1-
\frac{1}{p^{1+2r}}}-\sum_{e=1}^\infty \frac{\lambda_E(p^{2e})}{p^{e(1+2r)}}\right]\nonumber \\
+&\sum_{p\nmid N_E}\log p\Bigg[\frac{\frac{\lambda_E(p^2)}{p^{1+2r}}-\frac{2\lambda_E(p^2)}{p^{2(1+2r)}}+
\frac{3}{p^{3(1+2r)}}}{1-\frac{\lambda_E(p^2)}{p^{1+2r}}+\frac{\lambda_E(p^2)}{p^{2(1+2r)}}-\frac{1}{p^{3(1+2r)}}}
-\frac{\frac{1}{p^{1+2r}}}{1-\frac{1}{p^{1+2r}}} \nonumber\\ -&\sum_{e=0}^\infty \frac{\lambda_E(p^{2e+2}) -
\lambda_E(p^{2e})}{p^{(e+1)(1+2r)}} +\frac{1}{p
+1}\sum_{e=0}^\infty \frac{\lambda_E(p^{2e+2})-\lambda_E(p^{2e})}{p^{(e+1)(1+2r)}}\Bigg]. \label{Aalphabig}
\end{align}
We can express the logarithmic derivative of $\zeta(s)$ as  
\begin{equation}
\frac{\zeta'(1+2r)}{\zeta(1+2r)}=-\sum_{p} \log p\left[\frac{\frac{1}{p^{1+2r}}}{1-\frac{1}{p^{1+2r}}}\right]. \label{zetaoverzeta}
\end{equation} 
As for that of $L(s,{\rm Sym}^2 E)$, by \eqref{defofsymsqalpha} and \eqref{defofsymsq}, we obtain
\begin{align}
\frac{L'\left(1+2r,{\rm Sym}^2 E\right)}{L\left(1+2r,{\rm Sym}^2 E\right)}
=&-\sum_{p\mid N_E} \log p 
\left[\frac{\frac{\lambda_E(p)^2}{p^{1+2r}}}{1-\frac{\lambda_E(p)^2}{p^{1+2r}}} \right]-\sum_{p\nmid N_E} 
\log p \left[\frac{\frac{\lambda_E(p^2)}{p^{1+2r}}-\frac{2\lambda_E(p^2)}{p^{2(1+2r)}}+\frac{3}{p^{3(1+2r)}}}{1-
\frac{\lambda_E(p^2)}{p^{1+2r}}+\frac{\lambda_E(p^2)}{p^{2(1+2r)}}-\frac{1}{p^{3(1+2r)}}} \right]\nonumber\\
=&-\sum_{p\mid N_E} \log p \sum_{\ell=1}^\infty\frac{\alpha_E(p)^{2\ell}}{p^{\ell(1+2r)}}
-\sum_{p\nmid N_E}\sum_{\ell=1}^\infty \frac{\left(\alpha_E(p)^{2\ell}+\beta_E(p)^{2\ell}+1\right)\log p}{p^{\ell(1+2r)}}.
\label{lsymsderiv}
\end{align}
Thus, we have that $\eqref{Aalphabig}$ equals 
\begin{align}
A_{\alpha,E}(r,r)&=\frac{\zeta'(1+2r)}{\zeta(1+2r)}-\frac{L'\left(1+2r,{\rm Sym}^2 E\right)}{L\left(1+2r,{\rm Sym}^2 E\right)}-
\sum_{p\mid N_E} \log p\sum_{e=1}^\infty \frac{\lambda_E(p^{2e})}{p^{e(1+2r)}}\nonumber\\
&+\sum_{p\nmid N_E}\log p\left[-\sum_{e=0}^\infty \frac{\lambda_E(p^{2e+2})-\lambda_E(p^{2e})}{p^{(e+1)(1+2r)}}+
\frac{1}{p
+1}\sum_{e=0}^\infty \frac{\lambda_E(p^{2e+2})-\lambda_E(p^{2e})}{p^{(e+1)(1+2r)}}\right]. 
\label{Aalpharr}
\end{align}

Recall that when $p\mid N_E$, we have $\alpha_E(p)=\lambda_E(p)$. Hence we find, using \eqref{zetaoverzeta}, $\eqref{lsymsderiv}$ 
and the identity $\lambda_E(p^{2e})-\lambda_E(p^{2e-2})=\alpha_E(p)^{2e}+\beta_E(p)^{2e}$ (for $p\nmid N_E$), that $\eqref{Aalpharr}$ becomes
\begin{equation*}
A_{\alpha,E}(r,r)=-\sum_{p\mid N_E}\sum_{\ell=1}^\infty 
\frac{\log p}{p^{\ell(1+2r)}}+\sum_{p\nmid N_E}\frac{\log p}{p+1}
\sum_{\ell=1}^\infty \frac{\alpha_E(p)^{2\ell}+\beta_E(p)^{2\ell}}{p^{\ell(1+2r)}}.
\end{equation*}
Combining this with the proof of Theorem \ref{oneleveldensityresult} and Remark \ref{RATIOSONELEV2}, we obtain 
\begin{align} \nonumber
\frac{1}{2\pi}&\int_{\mathbb R}
\phi\left(\frac{tL}{2\pi}\right)\bigg[\frac{\Gamma'(1+it) }{\Gamma(1+it) }
 +\frac{\Gamma'(1-it)}{\Gamma(1-it)} + 2 \bigg(-\frac{\zeta'(1+2it)}{\zeta(1+2it)} +\frac{L'\left(1+2it,{\rm Sym}^2E\right)}{L\left(1+2it,{\rm Sym}^2E\right)}\\&+A_{\alpha,E}(it,it)\bigg)- \frac{1}{it} \bigg]dt+  \frac{\phi(0)}{2}=-\frac{2} L\int_0^{\infty} \left( \frac{\widehat \phi(x/L) e^{-x} }{1-e^{-x}} - \widehat \phi(0) \frac{e^{-x}}x \right)dx\nonumber\\
&+\frac{1}{\pi i}\int_{(c')}\bigg[-\frac{\zeta'(1+2r)}{\zeta(1+2r)}+\frac{L'\left(1+2r,{\rm Sym}^2E\right)}{L\left(1+2r,{\rm Sym}^2E\right)}-\sum_{p\mid N_E}\sum_{\ell=1}^\infty 
\frac{\log p}{p^{\ell(1+2r)}}\nonumber\\
&+\sum_{p\nmid N_E}\frac{\log p}{p
+1}\sum_{\ell=1}^\infty \frac{\alpha_E(p)^{2\ell}+\beta_E(p)^{2\ell}}{p^{\ell(1+2r)}}\bigg]\phi\left(\frac{ iLr}{2 \pi}\right)dr.\label{ANOTHERONELEVELDENSITY}
\end{align}

Next we consider the term 
\begin{multline}
\frac{1}{\pi i}\int_{(c')}\bigg[-\frac{\zeta'(1+2r)}{\zeta(1+2r)}+\frac{L'\left(1+2r,{\rm Sym}^2E\right)}{L\left(1+2r,{\rm Sym}^2E\right)}-\sum_{p\mid N_E}\sum_{\ell=1}^\infty 
\frac{\log p}{p^{\ell(1+2r)}}\\
+\sum_{p\nmid N_E}\frac{\log p}{p
+1}\sum_{\ell=1}^\infty \frac{\alpha_E(p)^{2\ell}+\beta_E(p)^{2\ell}}{p^{\ell(1+2r)}}\bigg]\phi\left(\frac{ iLr}{2 \pi}\right)dr \label{theseven}
\end{multline}
appearing in \eqref{ANOTHERONELEVELDENSITY}. It follows from $\eqref{zetaoverzeta}$ and $\eqref{lsymsderiv}$ that we can rewrite $\eqref{theseven}$ as
\begin{equation*} 
\frac{1}{\pi i}\int_{(c')}
\bigg[-\sum_p \log p \sum_{\ell=1}^{\infty} \frac{\alpha_E(p)^{2\ell}+\beta_E(p)^{2\ell}}{p^{\ell(1+2r)}}+\sum_{p\nmid N_E}\frac{\log p}{p
+1}\sum_{\ell=1}^\infty \frac{\alpha_E(p)^{2\ell}+\beta_E(p)^{2\ell}}{p^{\ell(1+2r)}}\bigg]\phi\left(\frac{ iLr}{2 \pi}\right) dr.
\end{equation*}
Furthermore, making the substitution $u=-\frac{iLr}{2\pi}$, we obtain
\begin{multline}
\frac{2}{L}\int_{\mathcal C'}
\bigg[-\sum_p \log p \sum_{\ell=1}^{\infty} \frac{\alpha_E(p)^{2\ell}+\beta_E(p)^{2\ell}}{p^{\ell}}e^{-(\log p)\left(\frac{4\pi i u\ell}{L}\right)}\\
+\sum_{p\nmid N_E}\frac{\log p}{p
+1}\sum_{\ell=1}^\infty \frac{\alpha_E(p)^{2\ell}+\beta_E(p)^{2\ell}}{p^{\ell}}e^{-(\log p)\left(\frac{4\pi i u\ell}{L}\right)}\bigg]\phi\left(u\right) du,\label{theseventwo}
\end{multline}
where $\mathcal C'$ denotes the horizontal line $\Im(u)=-\frac{Lc'}{2\pi}$. On $\mathcal C'$ the summations inside the integral in $\eqref{theseventwo}$ converge absolutely and uniformly on compact subsets. Hence we can interchange the order of integration and summation and we have that $\eqref{theseventwo}$ becomes
\begin{align*}
&-\frac{2}{L}\sum_p \log p \sum_{\ell=1}^{\infty} \frac{\alpha_E(p)^{2\ell}+\beta_E(p)^{2\ell}}{p^{\ell}}\int_{\mathcal C'}\phi\left(u\right)e^{-2\pi i u\left(\frac{2\ell\log p}{L}\right)}du\\
&+\frac{2}{L}\sum_{p\nmid N_E}\frac{\log p}{p
+1}\sum_{\ell=1}^\infty \frac{\alpha_E(p)^{2\ell}+\beta_E(p)^{2\ell}}{p^{\ell}}\int_{\mathcal C'}\phi\left(u\right)e^{-2\pi i u\left(\frac{2\ell\log p}{L}\right)}du.
\end{align*}
Finally, we change the contour of integration from $\mathcal C'$ to the line $\Im(u) =0$. This is possible since we are assuming that $\widehat \phi$ has compact support  on $\R$ and since the entire function $\phi(z):=\int_{\R}\widehat\phi(x)e^{2\pi ixz}\,dx$ satisfies the estimate
\begin{align*}
\left|\phi(T+it)\right|\leq\frac1{2\pi|T|}\int_{\R}\big|\widehat\phi'(x)\big|\max(1,e^{xLc'})\,dx,
\end{align*}
uniformly for $-\frac{Lc'}{2\pi}\leq t\leq0$, as $T\to\pm\infty$. We conclude that \eqref{theseven} equals
\begin{align}
&-\frac{2}{L}\sum_p \log p \sum_{\ell=1}^{\infty} \frac{\alpha_E(p)^{2\ell}+\beta_E(p)^{2\ell}}{p^{\ell}}\widehat \phi\left(\frac{2\ell\log p}{L} 
\right)\nonumber\\
&+\frac{2}{L}\sum_{p\nmid N_E}\frac{\log p}{p
+1}\sum_{\ell=1}^\infty \frac{\alpha_E(p)^{2\ell}+\beta_E(p)^{2\ell}}{p^{\ell}}\widehat \phi\left( \frac{2\ell\log p}{L} 
\right). \label{lookslikesevenone}
\end{align}

\begin{lemma} \label{sumofpdivd}
Let $\varepsilon>0$ and let $p$ be a fixed prime. Then we have that 
\begin{equation}
 \sumtp =  \begin{cases} \frac{W(X)}{p+1} +O_{\varepsilon}\left((p^2XN_E)^{\varepsilon}\right) & \:{\rm if}\: p\nmid N_E, \\ 0 &\:{\rm otherwise}.\end{cases} \label{eqsumofpdivd}
\end{equation}
\end{lemma}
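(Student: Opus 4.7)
The case $p\mid N_E$ is immediate: the conditions $(d,N_E)=1$ and $p\mid d$ are incompatible, so the sum is empty.

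For $p\nmid N_E$, the plan is to follow the Mellin-transform method used in the proof of Lemma \ref{lemmaweighted}. For square-free $d$ coprime to $N_E$, the conditions $p\mid d$ and $(d,p)=1$ partition the set of admissible $d$, hence
\[
\sumtp = W(X) - W'(X), \qquad W'(X):=\underset{(d,pN_E)=1}{{\sum}^{*}}\widetilde w\left(\tfrac{d}{X}\right).
\]
It therefore suffices to prove that $W'(X)=\tfrac{p}{p+1}W(X)+O_{\varepsilon}((XN_E)^{\varepsilon})$, since subtracting from $W(X)$ yields the desired main term $W(X)/(p+1)$, and the resulting error is well within the stated $O_{\varepsilon}((p^{2}XN_E)^{\varepsilon})$.

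Using that $\widetilde w$ is even and applying Mellin inversion, and noting that the Dirichlet series of positive square-free integers coprime to $pN_E$ factors as $\tfrac{\zeta(s)}{\zeta(2s)}\prod_{q\mid pN_E}(1+q^{-s})^{-1}$, I would combine this with the formula $\mathcal M\widetilde w(s)=\prod_{q\mid N_E}(1-q^{-2s})\,\zeta(2s)\,\mathcal Mw(s)$ from Lemma \ref{lemma mellin of tilde}. The simplification $(1-q^{-2s})(1+q^{-s})^{-1}=1-q^{-s}$ then gives
\[
W'(X)=\frac{2}{2\pi i}\int_{\Re(s)=2}X^{s}\,\zeta(s)\,(1+p^{-s})^{-1}\prod_{q\mid N_E}(1-q^{-s})\,\mathcal Mw(s)\,ds.
\]
I would then shift the contour to $\Re(s)=\varepsilon$; the only singularity crossed is the simple pole of $\zeta(s)$ at $s=1$ (the zeros of $1+p^{-s}$ lie on the imaginary axis and the putative poles of $\mathcal Mw(s)$ are at non-positive integers). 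Its residue evaluates to $\tfrac{p}{p+1}\cdot\tfrac{X\widehat w(0)}{2}\prod_{q\mid N_E}(1-1/q)$; multiplying by the external factor of $2$ and invoking Remark \ref{whatiswx} identifies this main contribution with $\tfrac{p}{p+1}W(X)+O_{\varepsilon}((XN_E)^{\varepsilon})$.

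The remaining integral along $\Re(s)=\varepsilon$ is controlled using the convexity estimate $|\zeta(\varepsilon+it)|\ll_{\varepsilon}(|t|+1)^{1/2}$, the trivial bound $(1-p^{-\varepsilon})^{-1}=O_{\varepsilon}(1)$, the divisor-type estimate $\prod_{q\mid N_E}|1-q^{-\varepsilon-it}|\leq 2^{\omega(N_E)}\ll_{\varepsilon}N_E^{\varepsilon}$, and the rapid decay of $\mathcal Mw(s)$ from Lemma \ref{lemma fast decay}; these combine to give an acceptable error of size $O_{\varepsilon}((XN_E)^{\varepsilon})$. No substantial obstacle is expected here---the only care needed is the careful bookkeeping of the extra Euler factor at $p$ when combining the two Dirichlet series so that the numerator $p$ in $\tfrac{p}{p+1}$ emerges cleanly.
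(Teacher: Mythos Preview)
Your proof is correct but takes a longer route than the paper. Both arguments begin with the same decomposition $\sumtp=W(X)-W'(X)$ where $W'(X)$ is the sum over square-free $d$ coprime to $pN_E$. The paper then observes that $\left(\tfrac{d}{p^{2}}\right)$ is exactly the indicator of $p\nmid d$, so $W'(X)=\sumt\left(\tfrac{d}{p^{2}}\right)$, and Lemma~\ref{lemmaweighted} applied with $n=p^{2}$ (a square) immediately gives $W'(X)=\tfrac{p}{p+1}W(X)+O_{\varepsilon}\big((p^{2}XN_E)^{\varepsilon}\big)$, finishing the proof in one line. You instead rederive this special case from scratch via Mellin inversion, which amounts to reproving Lemma~\ref{lemmaweighted} in the case $n=p^{2}$. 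Your computation is sound (indeed your error term $O_{\varepsilon}((XN_E)^{\varepsilon})$ is marginally sharper, since you exploit that $(1-p^{-\varepsilon})^{-1}$ is bounded uniformly in $p\geq 2$), but the paper's approach is preferable here because it avoids repeating an argument already packaged in an earlier lemma.
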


\begin{proof}
 It follows from Lemma \ref{lemmaweighted} (with $n=p^2$) and Remark \ref{whatiswx} that if $p\nmid N_E$, then
\begin{align*}
\sumtp=&\sumt - \underset{\substack{(d,N_E)=1\\ p\nmid d}}{\sum \nolimits^{*}}\widetilde w\left( \frac dX \right)
=\frac{W(X)}{p+1}+O_\varepsilon\left((p^2XN_E)^\varepsilon\right).
\end{align*}
\end{proof}

Combining $\eqref{lookslikesevenone}$ and Lemma \ref{sumofpdivd}, we have that $\eqref{theseven}$ becomes
\begin{align}
&-\frac{2}{L}\sum_p \log p \sum_{\ell=1}^{\infty} \frac{\alpha_E(p)^{2\ell}+\beta_E(p)^{2\ell}}{p^{\ell}}\widehat \phi\left( \frac{2\ell\log p}{L}\right) \nonumber\\
&+\frac {2}{LW(X)}\sum_{\ell=1}^\infty\sumt  \sum_{p\mid d} \frac{\left(\alpha_E(p)^{2\ell}+\beta_E(p)^{2\ell}\right) \log p}{p^{\ell}} \widehat \phi\left( \frac{2\ell\log p}{L} \right)\nonumber\\
&+O_{\varepsilon}\left(\frac{(XN_E)^\varepsilon}{LW(X)}\sum_{p\nmid N_E} \sum_{\ell=1}^\infty\frac{\left(\alpha_E(p)^{2\ell}+\beta_E(p)^{2\ell}\right) \log p}{p^{\ell-2\varepsilon}} \widehat \phi\left( \frac{2\ell\log p}{L} \right)\right).\label{theseventhree}
\end{align}
From the bounds $|\alpha_E(p)|, |\beta_E(p)| \leq 1$ and $W(X) \gg X$ (together with the assumption that $\widehat \phi$ has compact support), we have that the error term in $\eqref{theseventhree}$ is at most $O_{\varepsilon}(X^{-1+\varepsilon})$. Finally, noting that $$\left( \frac d{p^{2\ell}}\right)
=\begin{cases} 1 &\:{\rm if}\: p \nmid d, \\0&\:{\rm if}\: p \mid d,\end{cases}$$
we find that $\eqref{theseventhree}$ equals
\begin{equation*}
-\frac {2}{LW(X)}\sum_{\ell=1}^\infty \sum_{p} \frac{\left(\alpha_E(p)^{2\ell}+\beta_E(p)^{2\ell}\right) \log p}{p^{\ell}} \widehat \phi\left( \frac{2\ell\log p}{L} \right) \sumt \left( \frac d {p^{2\ell}}\right)
+O_{\varepsilon}(X^{-1+\varepsilon}),
\end{equation*}
which together with \eqref{ANOTHERONELEVELDENSITY} and Lemma \ref{lemmaweighted} and Remark \ref{whatiswx} (as in the proof of Theorem \ref{main theorem}) concludes the proof of Theorem \ref{extendedoneleveldensityresult}.

\end{document}